
\documentclass{amsart}

\usepackage{amsthm,amssymb,latexsym,amsmath}
\usepackage[all]{xypic}

\newcommand{\calc}{{\mathcal C}}

\newcommand{\catA}[1]{{\mathfrak A}}
\newcommand{\catI}[1]{{\mathfrak I}}
\newcommand{\catS}[1]{{\mathfrak S}}

\newcommand{\C}{\mathbb{C}}

\newcommand{\Z}{\mathbb{Z}}

\newcommand{\coker}{{\rm coker}}
\newcommand{\Hilb}{{\rm Hilb}}

\newcommand{\im}{{\rm Im}~}

\newcommand{\p}[1]{{\mathbb{P}^{#1}}}

\newcommand{\pn}{{\mathbb{P}^n}}

\newcommand{\yy}{\mathbb{Y}}

\DeclareMathOperator{\End}{End}
\DeclareMathOperator{\ext}{Ext}
\DeclareMathOperator{\Hom}{Hom}
\DeclareMathOperator{\ho}{H}

\DeclareMathOperator{\Mat}{Mat}

\newtheorem{theorem}{Theorem}[section]

\newtheorem{proposition}[theorem]{Proposition}
\newtheorem{lemma}[theorem]{Lemma}
\newtheorem{corollary}[theorem]{Corollary}

\newtheorem{definition}[theorem]{{\bf Definition}}

\begin{document}

\title[Commuting matrices and the Hilbert scheme of points]{Commuting matrices and the Hilbert scheme of points on affine spaces}

\author{Abdelmoubine A. Henni}
\author{Marcos Jardim}

\address{Universidade Federal de Santa Catarina \\
Departamento de Matem\'atica \\
Campus Universit\'ario Trindade\\
CEP 88.040-900 Florian\'apolis-SC, Brasil} 
\email{henni.amar@ufsc.br}

\address{IMECC - UNICAMP \\
Departamento de Matem\'atica \\
Rua S\'ergio Buarque de Holanda, 651 \\
Cidade Universit\'aria \\
13083-859 Campinas--SP, Brazil}
\email{jardim@ime.unicamp.br}

\begin{abstract}
We give linear algebraic and monadic descriptions of the Hilbert scheme of points on the affine space of dimension $n$ which naturally extends Nakajima's representation of the Hilbert scheme of points on the plane. As an application of our ideas and recent results from the literature on commuting matrices, we show that the Hilbert scheme of $c$ points on $(\C^3)$ is irreducible for $c\le 10$.
\end{abstract}

\maketitle
\tableofcontents


\section{Introduction}

The Hilbert scheme $\Hilb^{[c]}({\C^{n}})$ of $c$ points in the affine space of dimension $n$ parametrizes $0$-dimensional subschemes of $\C^{n}$ of length $c$. The case of $n=2$ is much studied: $\Hilb^{[c]}({\C^{2}})$ is an irreducible, nonsingular quasi-projective variety of dimension $2c$; in addition, it admits a hyperk\"ahler structure.
The structure of $\Hilb^{[c]}({\C^{n}})$ for $n\ge 3$ is much less understood: $\Hilb^{[c]}({\C^{3}})$ irreducible for $c\le8$ \cite{CEVV}, while it is reducible for $c\ge 78$ \cite{Iar85}; for $n\ge 4$, $\Hilb^{[c]}({\mathbb{C}^{n}})$ is irreducible if and only if $c\le 7$ \cite{CEVV}; however, $\Hilb^{[c]}({\C^{n}})$ is always connected \cite{H}.

The linear algebraic and monadic descriptions of $\Hilb^{[c]}({\C^{2}})$ given by Nakajima in \cite[Chapters 1 \& 2]{N2} are particularly relevant to us. In this paper, we give analogous descriptions of $\Hilb^{[c]}(\C^n)$ and of $\Hilb^{[c]}(\mathbb{Y})$, the Hilbert scheme of $c$ points in an affine variety $\mathbb{Y}$, naturally extending Nakajima's description of $\Hilb^{[c]}(\C^2)$. This goal is attained in the first part of the paper, see Sections \ref{Matrix-Para} through \ref{Rep-mod-functor}, and Section \ref{hilb_y} for the case of affine varieties. 

More precisely, let $V$ and $W$ be complex vector spaces of dimension $c$ and $1$, respectively. Let $B_0,\dots,B_{n-1}$ be operators on $V$ commuting with each other and consider a map $I:W\to V$. The $(n+1)$-tuple $(B_0,\dots,B_{n-1},I)$ is said to be \emph{stable} if there is no proper subspace $S\subset V$ which is invariant under each operator $B_k$ and contains the image of $I$. The group $GL(V)$ acts on the set of all such $(n+1)$-tuple by change of basis on $V$.

We prove that there is a one-to-one corresponding between the following objects:
\begin{enumerate}
\item ideals $J$ in the ring of polynomials $\C[x_{0},\dots,x_{n-1}]$ whose quotient has dimension $c$;
\item stable $(n+1)$-tuple $(B_0,\dots,B_{n-1},I)$ with $\dim V=c$, modulo the action of $GL(V)$;
\item complexes of the form, called \emph{perfect extended monads}:
\begin{equation*}
{\small \xymatrix@C-1.2pc{0\ar[r]&
V_{1-n}\otimes\mathcal{O}_{\pn}(1-n)\ar[r]^{\alpha_{1-n}} & V_{2-n}\otimes\mathcal{O}_{\pn}(2-n) &\hdots\ar[r]^{\alpha_{-1}\hspace{0.5cm}}& V_{0}\otimes\mathcal{O}_{\pn}\ar[r]^{\alpha_{0}\hspace{0.2cm}} & V_{1}\otimes\mathcal{O}_{\pn}(1)\ar[r]&0
}}
\end{equation*}
where $V_1:=V$, $V_0=V^{\oplus n}\oplus\mathbb{C}$ and $V_{i}=V^{\oplus \binom{n}{1-i}}$ for $i<0$,
which are exact everywhere except at degree $0$ (grading of the complex is given by the twisting).
\end{enumerate}
Furthermore, using the above correspondence, we also show that the $\Hilb^{[c]}(\C^n)$ is isomorphic (as a scheme) to a GIT quotient of $\calc(n,c)\times\Hom(W,V)$ by $GL(V)$, where $\calc(n,c)$ denotes the variety of $n$ commuting $c\times c$ matrices.

The correspondence between items (1) and (2) as well as the isomorphism between Hilbert scheme and the GIT quotient of $\calc(n,c)\times\Hom(W,V)$ by $GL(V)$ are already present in the representation theory literature, see for instance \cite{See}, \cite[Appendix by M. V. Nori]{Seshadri} and \cite{Van}, and more recently \cite{Vacca2,Gustavsen,Vacca1}. 

The correspondence with the so-called perfect extended monads is new. In fact, we introduce in Section \ref{ext-monads} a new class of objects, \emph{extended monads} (cf. Definition \ref{l-ext}), which generalize the usual monads originally introduced by Horrocks in the 1960's \cite{HO} and much studied by several authors since then. Furthermore, Fl\o ystad showed that the dual of the Horrocks--Mumford rank 2 bundle on $\p4$ also arises as the cohomology of an extended monad cf. \cite{Floystad}, while extended monads also arised in the mathematical physics literature
\cite{Szabo}. With this motivations in mind, the basic theory of extended monads is developed here, with a focus on what we call perfect extended monads. We provide a cohomological characterization of the sheaves that arise as cohomology of a perfect extended monad on projective spaces (see Proposition \ref{Character} below), showing, in particular, that ideal sheaves of zero dimensional subschemes do satisfy the required conditions.

We complement our discussion on the parametrization via linear algebra of the Hilbert scheme of points on affine varieties by providing a similar, but more general, description of the Hilbert scheme 
$\Hilb^{[c]}(\yy)$ of $c$ points on an affine variety $\yy\subset\C^n$. More precisely, suppose that $\yy$ is given by algebraic equations $f_1=\cdots=f_l=0$; we show in Section \ref{hilb_y} that a point in $\Hilb^{[c]}(\yy)$ corresponds to a stable $(n+1)$-tuple $(B_0,\dots,B_{n-1},I)$ such that $f_k(B_0,\dots,B_{n-1})=0$ for each $k=1,\dots,l$. Moreover, such correspondence yields a schematic isomorphism between $\Hilb^{[c]}(\yy)$ and the variety of commuting matrices satisfying $f_k(B_0,\dots,B_{n-1})=0$ plus a vector, modulo $GL(V)$.

Finally, as an application of our ideas, it is not difficult to see that $\Hilb^{[c]}(\C^n)$ is irreducible whenever ${\mathcal C}(n,c)$ is irreducible, see details in Section \ref{irred} below. It then follows from recent results due to \u{S}ivic \cite[Theorems 26 \& 32]{S}, that $\Hilb^{[c]}(\C^3)$ is irreducible if $c\le 10$, while this was known to be the case only for $c\le8$, cf. \cite[Theorem 1.1]{CEVV}.

\bigskip

\paragraph{\bf Acknowledgments.}
We would like to thank D. Erman for useful discussions and suggestions about the irreducibility results of Hilbert schemes of points.

AAH was supported by the FAPESP post-doctoral grant number 2009/12576-9. MJ is partially supported by the CNPq grant number 303332/2014-0 and the FAPESP grant number 2014/14743-8.


\section{Commuting matrices and stable ADHM data}\label{Matrix-Para}

In this section we shall introduce the necessary material to our construction: let $V$ be a complex vector space of dimension $c$ and let $B_{0},B_{1},\ldots,B_{n-1}\in\End(V)$  be $n$ linear operators on $V$.

\begin{definition}
The variety $\mathcal{C}(n,c)$ of $n$ commuting linear operators on $V$ is the subvariety of $\End(V)^{\oplus n}$ given by
$$ \mathcal{C}(n,c)=\left\{(B_{0},B_{1},\ldots,B_{n-1})\in\End(V)^{\oplus n} ~|~  [B_i, B_j]=0, \, \forall i\neq j\right\} . $$
\end{definition}

The commutation relations can be thought of as a system of ${n\choose2}c^{2}$ homogeneous equations of degree $2$ in $nc^{2}$ variables. 

Let $W$ be a $1$-dimensional complex vector space; one can form the space
$$ \mathbb{B}:=\End(V)^{\oplus n}\oplus \Hom(W,V) $$
whose points are represented by the $(n+1)$-tuple $X=(B_{0},B_{1},\ldots,B_{n-1},I)$ that will be called an \emph{ ADHM datum}. We then define the \emph{variety of ADHM data} $\mathcal{V}(n,c)$ as the subvariety of $\mathbb{B}$ given by
$$ \mathcal{V}(n,c):= \mathcal{C}(n,c)\times\Hom(W,V). $$

\begin{definition}\label{stability}
An ADHM datum $X=(B_{0},B_{1},\ldots,B_{n-1},I)\in\mathbb{B}$ is said to be \emph{stable} if there is no proper subspace $S \subsetneq V$ such that 
$$ B_{0}(S),B_{1}(S),\cdots,B_{n-1}(S), I(W)\subset S. $$
\end{definition}

The set of stable points in $\mathbb{B}$ will be denoted by $\mathbb{B}^{st}$; $\mathcal{V}(n,c)^{st}:= \mathbb{B}^{st}\cap\mathcal{V}(n,c)$ will denote the set of stable points in $\mathcal{V}(n,c).$

\bigskip

Next, we introduce the action of the linear group $G:=GL(V)$ on $\mathbb{B}$. For all $g\in G$ and $X=(B_{0},\ldots,B_{n-1},I)\in \mathbb{B},$ this action is given by
$$ g\cdot(B_{0},\ldots,B_{n-1},I) = (gB_{0}g^{-1},\ldots,gB_{n-1}g^{-1},gI). $$
For a fixed ADHM datum $X,$ we will denote by $G_X$ its stabilizer subgroup:
$$ G_X:=\{g \in G \,|\, gX = X\}\subseteq G. $$
It is easy to see that $X$ is stable if and only if $gX$ is stable, and that $G$ acts on $\mathcal{V}(n,c)$.

We conclude this section with two results relating stability in the sense of Definition \ref{stability} with GIT stability, following the construction in \cite[\S2]{K}.

\begin{proposition}\label{prop4}
If $X\in\mathcal{V}(n,c)^{st}$, then its stabilizer subgroup $G_{X}$ is trivial.
\end{proposition}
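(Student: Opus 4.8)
The plan is to show that if $g \in G_X$ fixes a stable datum $X = (B_0,\dots,B_{n-1},I)$, then $g = \mathbf{1}_V$. The key observation is that stability gives us a concrete spanning set for $V$ coming from the image of the map $\mathcal{R}_n(X)$, and $g$ is forced to act as the identity on each of its generators.

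First I would unpack what $gX = X$ means: it says $gB_ig^{-1} = B_i$ for all $i$, i.e. $g$ commutes with every $B_i$, and $gI = I$. Since $X \in \mathcal{V}(n,c)^{st}$, by Proposition \ref{prop3} the map $\mathcal{R}_n(X)$ is surjective, so $V$ is spanned by vectors of the form $v = B_0^{k_0}B_1^{k_1}\cdots B_{n-1}^{k_{n-1}} I w$ with $w \in W$. Now I would compute $g$ applied to such a vector: using that $g$ commutes with each $B_i$ we may push $g$ past all the operators $B_i^{k_i}$, obtaining $g v = B_0^{k_0}\cdots B_{n-1}^{k_{n-1}} (gI) w = B_0^{k_0}\cdots B_{n-1}^{k_{n-1}} I w = v$, where in the middle step we used $gI = I$. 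Hence $g$ fixes a spanning set of $V$, so $g = \mathbf{1}_V$ and $G_X$ is trivial.

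The steps, in order: (1) translate $gX = X$ into the commutation relations $[g,B_i]=0$ and the equation $gI = I$; (2) invoke Proposition \ref{prop3}(2) to get surjectivity of $\mathcal{R}_n(X)$, hence a spanning set of $V$ of monomial-in-the-$B_i$ times $I$ form; (3) verify $g$ acts trivially on each such generator by commuting $g$ through; (4) conclude $g = \mathbf{1}_V$.

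There is no serious obstacle here — the only point requiring a touch of care is making sure the argument genuinely uses membership in $\mathcal{V}(n,c)$ and not merely stability in $\mathbb{B}$: the commuting hypothesis is what lets us push $g$ through an arbitrary word in the $B_i$'s, and it is also exactly what makes $\mathcal{R}_n(X)$ (whose image is built from the \emph{specific} ordered monomials $B_0^{k_0}\cdots B_{n-1}^{k_{n-1}}$) surject onto all of $\Sigma_X = V$ via Proposition \ref{prop3}. For a general stable datum in $\mathbb{B}$ one would instead have to run an induction over the subspaces generated by successively applying the $B_i$ to $I(W)$, which also works but is less clean; since the statement only concerns $\mathcal{V}(n,c)^{st}$, the spanning-set argument above is the most economical route.
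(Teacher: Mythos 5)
Your proof is correct, but it takes a different route from the paper's. You invoke Proposition \ref{prop3}(2) to get surjectivity of $\mathcal{R}_{n}(X)$ and then check that $g$ fixes each generator $B_{0}^{k_{0}}\cdots B_{n-1}^{k_{n-1}}Iw$ by commuting $g$ through the word. The paper instead argues directly from Definition \ref{stability}: since $g$ commutes with every $B_{i}$ and $gI=I$, the fixed subspace $\ker(g-\mathbf{1})$ is $B_{i}$-invariant for all $i$ and contains $\im I$, so by stability it cannot be proper, hence equals $V$ and $g=\mathbf{1}$. The paper's argument is more economical in its dependencies: it uses only the definition of stability, needs neither the commutation relations among the $B_{i}$ nor Cayley--Hamilton (both of which are baked into Proposition \ref{prop3}), and therefore actually proves the stronger statement that the stabilizer is trivial for \emph{any} stable datum in $\mathbb{B}^{\rm st}$, not just for those in $\mathcal{V}(n,c)^{\rm st}$. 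In particular, your closing remark that the general case in $\mathbb{B}$ would require an induction over iterated images of $I(W)$ is not quite right: the kernel argument disposes of that case in one line. Your version is still perfectly valid for the statement as given, and it has the minor virtue of exhibiting explicitly the spanning set on which $g$ acts trivially.
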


\begin{proof}
Let $X=(B_{0},\ldots,B_{n-0},I)$ be a stable ADHM datum and suppose that there exists an element $g\neq\mathbf 1$ in $G$ such that $gI=I$ and $gB_{i}g^{-1}=B_{i}$ for all $i\in\{0, \ldots,n-1\}.$ Then  $\ker(g-\mathbf1)$ is $B_{i}$-invariant, for all $i\in\{0, \ldots,n-1\},$ and $\im I\subseteq\ker(g-\mathbf 1).$ Since $X$ is stable, then $\ker(g-\mathbf 1)\subset V$ must be equal to $V.$ Hence $g$ must be the identity.
\end{proof}

Let $\Gamma(\mathcal{V}(n,c))$ be the ring of regular functions on $\mathcal{V}(n,c).$ Fix $l>0,$ and consider the group homomorphism $\chi:G\to\mathbb{C}^{\ast}$ given by $\chi(g)=(\det g)^{l}.$ This can be used for the of construction a suitable linearization of the $G$-action on $\mathcal{V}(n,c),$ that is, to lift the action of
$G$ on $\mathcal{V}(n,c)$ to an action on $\mathcal{V}(n,c)\times\mathbb{C}$ as follows:
$g\cdot(X,z):=(g\cdot X,\chi(g)^{-1}z)$ for any ADHM datum $X\in\mathcal{V}(n,c)$ and $z\in\mathbb{C}.$ Then one can form the scheme
$$\mathcal{V}(n,c) /\!/_{\chi}G :=
{\rm Proj}\left(\bigoplus_{i\geq0}\Gamma(\mathcal{V}(n,c))^{G,\chi^{i}} \right) $$
where
$$\Gamma(\mathcal{V}(n,c))^{G,\chi^{i}}:=\left\{ f\in\Gamma(\mathcal{V}(n,c)) ~|~
f(g\cdot X)=\chi(g)^{-1}\cdot f(X),\hspace{0.2cm}\forall g\in G\right\}. $$
The scheme $\mathcal{V}(n,c)/\!/_{\chi}G$ is projective over the ring $\Gamma(\mathcal{V}(n,c))^{G}$ and quasi-projective over
$\mathbb{C}.$

\begin{proposition}\label{closed orbit}
The orbit $G\cdot(X,z)$ is closed, for $z\neq0$, if and only if  the
ADHM datum $X\in\mathcal{V}(n,c)$ is a stable.
\end{proposition}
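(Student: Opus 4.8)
The statement is an equivalence, and I would establish its two halves by quite different arguments.

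\emph{If the orbit $G\cdot(X,z)$ is closed (with $z\neq0$), then $X$ is stable.} I would prove the contrapositive. Suppose $X=(B_{0},\dots,B_{n-1},I)\in\mathcal{V}(n,c)$ is not stable, so there is a proper $B_{i}$-invariant subspace $S\subsetneq V$ with $I(W)\subseteq S$ (for instance $S=\Sigma_{X}$). Fix a linear complement $V=S\oplus U$ with $\dim U\geq1$ and consider the one-parameter subgroup $\lambda\colon\mathbb{C}^{*}\to G$ given by $\lambda(t)=\mathbf{1}_{S}\oplus t^{-1}\mathbf{1}_{U}$. Writing each $B_{i}$ in block form with respect to $S\oplus U$ (its lower-left block vanishes because $S$ is invariant), conjugation by $\lambda(t)$ multiplies the upper-right block by $t$, fixes the diagonal blocks, and fixes $I$ since $I(W)\subseteq S$; meanwhile the $\mathbb{C}$-coordinate is multiplied by $\chi(\lambda(t))^{-1}=(\det\lambda(t))^{-l}=t^{l\dim U}$. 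Hence $\lambda(t)\cdot(X,z)$ stays in the closed subset $\mathcal{V}(n,c)\times\mathbb{C}$ and tends, as $t\to0$, to $(X_{0},0)$, where $X_{0}$ is the block-diagonal datum built from $X$. Since $\chi(g)\in\mathbb{C}^{*}$ for every $g$, each point of $G\cdot(X,z)$ has nonzero last coordinate, so $(X_{0},0)$ lies in the closure but not in the orbit, and the orbit is not closed.

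\emph{If $X$ is stable then $G\cdot(X,z)$ is closed (with $z\neq0$).} By Proposition~\ref{prop3} the map $\mathcal{R}_{n}(X)$ is surjective; picking $c$ of its $c^{n}$ defining columns that are linearly independent, I obtain monomials $P_{1},\dots,P_{c}$ in $B_{0},\dots,B_{n-1}$ such that, for a generator $w$ of $W$, the vectors $P_{1}(B)Iw,\dots,P_{c}(B)Iw$ form a basis of $V$. Define the regular function $F$ on $\mathbb{B}\times\mathbb{C}$ by $F(Y,z)=\Delta(Y)^{l}z$, where for $Y=(C_{0},\dots,C_{n-1},J)$ the scalar $\Delta(Y)$ is the determinant of the matrix whose $\alpha$-th column is $P_{\alpha}(C_{0},\dots,C_{n-1})Jw$. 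For $g\in G$ the $\alpha$-th column of this matrix for $g\cdot Y$ equals $g$ times its $\alpha$-th column for $Y$ (conjugation commutes with forming monomials), so $\Delta(g\cdot Y)=(\det g)\,\Delta(Y)$ and hence $F(g\cdot Y,\chi(g)^{-1}z)=\chi(g)\,\Delta(Y)^{l}\,\chi(g)^{-1}z=F(Y,z)$. Thus $F$ is constant on the orbit $G\cdot(X,z)$, with value $\Delta(X)^{l}z\neq0$, and therefore also on its closure. Consequently every $(X',z')\in\overline{G\cdot(X,z)}$ has $z'\neq0$ and $\Delta(X')\neq0$; the latter forces $\mathcal{R}_{n}(X')$ to be surjective, and since $\mathcal{V}(n,c)\times\mathbb{C}$ is closed, $X'\in\mathcal{V}(n,c)$, so $X'$ is stable by Proposition~\ref{prop3}. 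A dimension count now finishes: stability of $X$ gives $G_{X}=\{\mathbf{1}\}$ (Proposition~\ref{prop4}), hence $G_{(X,z)}=\{\mathbf{1}\}$ and $\dim G\cdot(X,z)=\dim G=c^{2}$; by the same reasoning every point of $\overline{G\cdot(X,z)}$ lies on an orbit of dimension $c^{2}$. But the boundary $\overline{G\cdot(X,z)}\setminus G\cdot(X,z)$, if nonempty, would be a closed $G$-invariant proper subset, necessarily a union of orbits of dimension strictly less than $c^{2}$ --- impossible. Therefore the orbit is closed.

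The substantive half is the second one, and the only genuinely delicate step there is controlling the boundary of the orbit: a priori its Zariski closure could meet the hyperplane $\{z=0\}$ or leave the stable locus, and the semi-invariant function $F=\Delta^{l}\cdot z$ is exactly what rules out both, after which Propositions~\ref{prop3} and~\ref{prop4} plus the standard dimension bound for boundary orbits do the rest. (Alternatively one may invoke the Hilbert--Mumford/Kempf criterion: a non-closed orbit would produce a one-parameter subgroup $\lambda$ with weight decomposition $V=\bigoplus_{k}V_{k}$, and for $\lim_{t\to0}\lambda(t)\cdot(X,z)$ to exist the subspace $\bigoplus_{k\geq0}V_{k}$ would have to be $B_{i}$-invariant and to contain $I(W)$, hence equal $V$ by stability; the constraint on the $\mathbb{C}$-coordinate then forces $\lambda$ to be trivial, a contradiction.)
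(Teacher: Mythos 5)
Your proof is correct. The ``unstable implies non-closed'' half is essentially the paper's own argument: the same one-parameter subgroup $\lambda(t)=\mathbf{1}_{S}\oplus t^{-1}\mathbf{1}_{U}$ built from a destabilizing subspace, which fixes $I$, scales the off-diagonal blocks of the $B_i$, and drives the $\mathbb{C}$-coordinate to $0$ because $l>0$. The other half is where you genuinely diverge. The paper proves it by contrapositive via the Hilbert--Mumford/Kempf input that a non-closed orbit admits a one-parameter subgroup whose limit exists outside the orbit; the existence of the limit forces $\det\lambda(t)=t^{N}$ with $N<0$, and the non-negative part of the weight decomposition is then a proper invariant subspace containing $I(W)$ --- exactly the alternative you sketch in your closing parenthesis. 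Your main argument instead is direct and self-contained: the determinant $\Delta$ extracted from the surjectivity of $\mathcal{R}_{n}(X)$ (Proposition \ref{prop3}) yields a genuinely $G$-invariant regular function $F=\Delta^{l}z$ that is nonzero on the orbit, hence on its Zariski closure, which confines the closure to the stable locus with $z\neq0$; triviality of stabilizers (Proposition \ref{prop4}) then makes every orbit in the closure of dimension $c^{2}$, and since the boundary of an orbit is a union of orbits of strictly smaller dimension, it must be empty. Your route avoids invoking the existence of a destabilizing one-parameter subgroup for non-closed orbits, at the cost of the auxiliary semi-invariant and the orbit-dimension lemma; the paper's route is shorter but leans on that piece of GIT. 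Both are sound, and your explicit $F$ has the side benefit of exhibiting concrete elements of $\Gamma(\mathcal{V}(n,c))^{G,\chi^{i}}$ that do not vanish at stable points.
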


\begin{proof}
The proof is similar to \cite[Proposition 2.10]{HJM}
\end{proof}

From Propositions \ref{prop4} and \ref{closed orbit} and since the group $G$ is reductive, it follows that the quotient space
$\mathcal{M}(n,c):=\mathcal{V}(n,c) /\!/_{\chi}G$ is a good categorical quotient \cite[Thm. 1.10]{mumford}. Furthermore, GIT tells us that the GIT quotient $\mathcal{M}(n,c)$ is the space of orbits $G\cdot X\subset \mathcal{V}(n,c)$ such that the lifted
orbit $G\cdot(X,z)$ is closed within $\mathcal{V}(n,c)\times\mathbb{C}$ for all $z\neq0.$ We conclude therefore, from Proposition \ref{closed orbit}, that 
$$ \mathcal{M}(n,c) = \mathcal{V}(n,c)^{\rm st} / G . $$


Finally we end this section by mentioning the following results: As a set, the Hilbert scheme of $c$ points on $\mathbb{C}^{n}$ is given by:
$$ {\rm Hilb}^{[c]}(\mathbb{C}^n) = \{ I \triangleleft \mathbb{C}[z_0,\dots,z_{n-1}] ~|~ 
\dim_\mathbb{C}(\mathbb{C}[z_0,\dots,z_{n-1}]/I)=c \} . $$

The existence of its schematic structure is a special case of the general result of Grothendieck \cite{Groth1}. Another explicit construction of the Hilbert scheme of points on the affine plane is given by Nakajima \cite{N2}. The reader may also consult \cite{Nitsure} for more general results and examples.

\bigskip

We conclude this section by stating the following:

\begin{theorem}\label{Corresp}
There exists a set-theoretical bijection between the quotient space $\mathcal{M}(n,c)$ and the Hilbert scheme of $c$ points in $\mathbb{C}^{n}.$
\end{theorem}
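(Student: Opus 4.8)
The plan is to construct explicit maps in both directions between $\mathcal{M}(n,c) = \mathcal{V}(n,c)^{\rm st}/G$ and $\Hilb^{[c]}(\C^n)$ and check they are mutually inverse. First I would send a stable ADHM datum $X = (B_0,\dots,B_{n-1},I)$ to an ideal as follows: let $e := I(w)$ for a fixed generator $w$ of $W$, and consider the surjective $\C$-algebra map $\varphi_X : \C[z_0,\dots,z_{n-1}] \to V$ defined by $\varphi_X(p) = p(B_0,\dots,B_{n-1})e$. The commutativity of the $B_i$ makes this well-defined as an algebra map onto its image, and Proposition \ref{prop3}(2) — surjectivity of $\mathcal{R}_n(X)$ for stable data in $\mathcal{V}(n,c)$ — shows $\varphi_X$ is surjective, since every element of $V$ is of the form $\sum B_0^{k_0}\cdots B_{n-1}^{k_{n-1}} e$. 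Set $J_X := \ker \varphi_X$; then $\C[z_0,\dots,z_{n-1}]/J_X \cong V$ has dimension $c$, so $J_X \in \Hilb^{[c]}(\C^n)$. One checks $J_X$ depends only on the $G$-orbit of $X$, since conjugating the $B_i$ and applying $g$ to $I$ only changes $\varphi_X$ by the isomorphism $g : V \to V$, hence does not change its kernel.

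For the reverse direction, given $J \in \Hilb^{[c]}(\C^n)$, set $V := \C[z_0,\dots,z_{n-1}]/J$, a $c$-dimensional space; let $B_i$ be the operator on $V$ given by multiplication by $z_i$ (these commute since $\C[z]$ is commutative), let $W := \C$ and let $I : W \to V$ send $1$ to the class of $1 \in \C[z]/J$. This $X_J := (B_0,\dots,B_{n-1},I)$ lies in $\mathcal{V}(n,c)$, and it is stable: any subspace $S \subset V$ that is invariant under all $B_i$ and contains $I(W) = \C\cdot\overline 1$ must contain $\overline{z_i} = B_i \overline 1$, then all $\overline{z_i z_j}$, etc., hence all of $\C[z]/J = V$, so $\Sigma_{X_J} = V$ and stability follows from the criterion right after the definition of $\Sigma_X$. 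Because $V$ only depends on $J$ up to a choice of basis, $X_J$ is well-defined up to $G$-action, giving a well-defined point of $\mathcal{M}(n,c)$.

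It remains to verify that these two assignments are mutually inverse. Starting from $J$, forming $X_J$, and then taking $J_{X_J} = \ker \varphi_{X_J}$: here $\varphi_{X_J}(p) = p(B_0,\dots,B_{n-1})\overline 1 = \overline{p}$ is exactly the quotient map $\C[z] \to \C[z]/J$, so its kernel is $J$. Conversely, starting from a stable $X$, forming $J_X = \ker\varphi_X$, and then forming $X_{J_X}$: the isomorphism $\C[z]/J_X \xrightarrow{\sim} V$ induced by $\varphi_X$ intertwines multiplication by $z_i$ on the left with $B_i$ on the right and sends $\overline 1$ to $e = I(w)$, which is precisely the statement that $X_{J_X}$ and $X$ are related by an element of $G$, i.e.\ represent the same point of $\mathcal{M}(n,c)$. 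This establishes the bijection.

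The routine parts are the verifications that $\varphi_X$ is a well-defined surjective algebra homomorphism and that the correspondences descend to orbit spaces; the only mild subtlety worth spelling out is the use of Proposition \ref{prop3} (rather than Proposition \ref{prop2}) to get surjectivity of $\varphi_X$ — this is where commutativity of the $B_i$, i.e.\ the restriction to $\mathcal{V}(n,c)$ as opposed to all of $\mathbb{B}$, is essential, since $\varphi_X$ being an algebra map forces its image to be the (associative, commutative) subalgebra generated by the $B_i$ acting on $e$, which coincides with $\im \mathcal{R}_n(X)$. I expect no serious obstacle at the set-theoretic level addressed by this theorem; the genuinely hard work (the schematic isomorphism asserted in the introduction) is presumably deferred to a later section and is not needed here.
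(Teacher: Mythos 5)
Your proposal is correct and follows essentially the same route as the paper: the same two maps $[X]\mapsto\ker\Phi_X$ and $J\mapsto[(B_0,\dots,B_{n-1},I)]$ with $V=\C[z_0,\dots,z_{n-1}]/J$, the same well-definedness checks on orbits, and the same verification that the compositions are the identity. The only cosmetic differences are in the auxiliary justifications (the paper proves surjectivity of $\Phi_X$ directly from stability of $\im\Phi_X$ in its Lemma \ref{lema3} rather than via Proposition \ref{prop3}, and packages the stability of $X_J$ as Lemma \ref{lema5}), which are interchangeable with yours.
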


This is achieved by constructing, for every stable ADHM datum
$$ X=(B_{0},\ldots,B_{n-1},I)\in\mathcal{V}(n,c)^{\rm st}, $$
the surjective linear transformation:
\begin{center}
$\begin{array}{cccl} \Phi_{X}: & \mathbb{C}\left[Z_{0},\ldots,Z_{n-1}\right] & \longrightarrow & \qquad V \\ & p(Z_{0},\ldots,Z_{n-1}) & \longmapsto &  p(B_{0},\ldots,B_{n-1})I(1) \end{array}$
\end{center} In particular, $\mathbb{C}\left[Z_{0},\ldots,Z_{n-1}\right]/\ker \Phi_{X}$ is isomorphic to $V$.
Clearly this maps descend to a map \begin{center}
$\begin{array}{cccl}\Psi : &  \mathcal{M}(n,c) & \longrightarrow & \Hilb^{[c]}({\mathbb{C}^{n}}) \\ & [X] & \longmapsto & \ker \Phi_{X}\end{array},$
\end{center}
and showing that it is a bijection is an exercise left to the reader.


\section{Extended monads and perfect extended monads}\label{ext-monads}

In this section we shall generalize the concept of \emph{monads}, introduced by Horrocks (the reader may consult \cite{OSS} for definitions and properties), in order to describe ideal sheaves for zero-dimensional subschemes of $\C^{n}$ and $\pn$, $n\geq2.$

Let $X$ be a smooth projective algebraic variety of dimension $n$ over the field of complex numbers $\C$, and let $\mathcal{O}_{X}(1)$ be a polarization on it.


\subsection{$l$-extended monads}

We introduce the following generalization of monads.

\begin{definition}\label{l-ext}
An \emph{$l$-extended monad} over $X$ is a complex
\begin{equation}\label{premordial}
C^{\bullet}:\quad  0\to C^{-l-1}\stackrel{\alpha_{-l-1}}{\longrightarrow} C^{-l}\stackrel{\alpha_{-l}}{\longrightarrow}\cdots\stackrel{\alpha_{-2}}{\longrightarrow} C^{-1}\stackrel{\alpha_{-1}}{\longrightarrow} C^{0}\stackrel{\alpha_{0}}{\longrightarrow} C^{1}\to0
\end{equation}
of locally free sheaves over $X$ which is exact at all but the $0$-th position, i.e. $\mathcal{H}^{i}(C^{\bullet})=\ker\alpha_i/\im\alpha_{i-1}=0$ for $i\ne0$. 
The coherent sheaf $\mathcal{E}:=\mathcal{H}^{0}(C^{\bullet})=\ker \alpha_{0}/\im \alpha_{-1}$ will be called \emph{the cohomology of $C^{\bullet}$}.
 \end{definition}
 
Note that a monad on $X$, in the usual sense, is just a $0$-extended monad.

Moreover, one can associate to any $l$-extended monad $C^{\bullet}$ a \emph{display} of exact sequences as the following
\begin{equation}\label{ext-display}
\xymatrix@R-0.8pc@C0.5pc@u{& 0\ar[d] & 0\ar[d] &  & \\
& C^{-l-1}\ar[d]_{\alpha_{-l-1}}\ar@{=}[r] & C^{-l-1}~~~~~~\ar[d]^{\alpha_{-l-1}} & & \\
& C^{-l}\ar@{=}[r]\ar[d]_{\alpha_{-l}} & C^{-l}\ar[d]^{\alpha_{-l}} & & \\
& \vdots\ar[d]_{\alpha_{-2}} & \vdots\ar[d]^{\alpha_{-2}} & & \\
& C^{-1}\ar[d]\ar@{=}[r] & C^{-1}\ar[d]^{\alpha_{-1}} & & \\
0\ar[r]& K\ar[d]\ar[r] & C^{0}\ar[d]\ar[r]^{\alpha_{0}} & C^{1}\ar@{=}[d]\ar[r] &0 \\
0\ar[r]& \mathcal{E}\ar[d]\ar[r] & Q\ar[d]\ar[r] & C^{1}\ar[r] & 0\\
& 0& 0 &  &
}
\end{equation}
where $K:=\ker\alpha_{0}$ and $Q:=\coker\alpha_{-1}$.

A morphism $\phi:C_{1}^{\bullet}\to C_{2}^{\bullet}$ between two $l$-extended monads $C_{1}^{\bullet}$ and $C_{2}^{\bullet}$ is an $(l+3)$-tuple of morphisms such that the following diagram commutes:
\begin{equation}\label{morph}
\xymatrix@C-0.5pc@R-1,2pc{
C_{1}^{\bullet}:\ar[d]^{\phi} &0\ar[r]& C_{1}^{-l-1}~~~~~~\ar[d]_{\phi_{-l-1}}\ar[r]^{\alpha^{1}_{-l-1}}&C_{1}^{-l}\ar[d]^{\phi_{-l}}&\cdots\ar[r]^{\alpha^{1}_{-2}}&C_{1}^{-1}\ar[d]^{\phi_{-1}}\ar[r]^{\alpha^{1}_{-1}} & C_{1}^{0}\ar[d]^{\phi_{0}}\ar[r]^{\alpha^{1}_{0}}&C_{1}^{1}\ar[d]^{\phi_{1}}\ar[r]&0 \\
C_{2}^{\bullet}:&0\ar[r]& C_{2}^{-l-1}~~~~~~\ar[r]^{\alpha^{2}_{-l-1}}&C_{2}^{-l}&\cdots\ar[r]^{\alpha^{2}_{-2}}&C_{1}^{-1}\ar[r]^{\alpha^{2}_{-1}} & C_{2}^{0}\ar[r]^{\alpha^{2}_{0}}&C_{2}^{1}\ar[r]&0
}
\end{equation}

With these definitions, the category of $l$-extended monads form a full subcategory of the category $Kom^{\flat}(X)$ of bounded
complexes of coherent sheaves on $X.$ 

$l$-extended monads have already appeared in the literature. The most important example of a locally-free sheaf that can be obtained as the cohomology of a 2-extended monad on $\p4$ is the dual of the Horrocks--Mumford bundle; indeed, Fl\o ystad shows in \cite[Introduction: example b.]{Floystad} that the Horrocks--Mumford bundle is given by the cohomology at degree zero, where the grading is given by the twist, of a complex of the form
$$ 0\to\mathcal{O}^{\oplus 5}_{\mathbb{P}^{4}}(-1) \to \mathcal{O}^{\oplus 15}_{\mathbb{P}^{4}} \to
\mathcal{O}^{\oplus 10}_{\mathbb{P}^{4}}(1) \to \mathcal{O}^{\oplus 2}_{\mathbb{P}^{4}}(2)\to0. $$
Dualizing such complex we get a 2-extended monad on $\p4$ whose cohomology is the dual of the
Horrocks--Mumford bundle.

Moreover, objects very closely related to 2-extended monads on $\p3$ have also appeared in the mathematical physics literature, see \cite[Section 4]{Szabo}.

An $l$-extended monad can be broken into the following two complexes: first,
\begin{equation}\label{Resol}
\xymatrix@C-0.5pc{ N^{\bullet}:&0\ar[r]&C^{-l-1}~~~~~~\ar[r]^{\alpha_{-l-1}}&C^{-l}&\cdots\ar[r]^{\alpha_{-3}}&C^{-2}\ar[r]^{\alpha_{-2}} & C^{-1}\ar[r]^{J_{-1}}&\mathcal{G}\ar[r]&0
} \end{equation}
which is exact, and a locally free resolution of the sheaf $\mathcal{G}=\coker\alpha_{-2},$ and
\begin{equation}\label{Monad-like}
\xymatrix@C-0.5pc{ M^{\bullet}:& \mathcal{G}\ar[r]^{I_{-1}} & C^{0}\ar[r]^{\alpha_{0}}&C^{1}
} \end{equation}
where $I_{-1}\circ J_{-1}=\alpha_{-1}$. $M^{\bullet}$ is a monad-like complex in which the coherent sheaf $\mathcal{G}$ might not be locally free; indeed, $\mathcal{G}$ is not locally free for the extended monads describing ideal sheaves of $0$-dimensional subschemes, the situation most relevant to the present paper. 

For a given $l$-extended monad, we refer to the complexes $M^{\bullet}$ and $N^{\bullet}$ as the \emph{associated resolution} and the \emph{associated monad}, respectively. Therefore, the morphism $\phi:C_{1}^{\bullet}\to C_{2}^{\bullet}$ can be thought of as a pair of morphisms $(\phi_{N}:N_{1}^{\bullet}\to N_{2}^{\bullet},\phi_{M}:M_{1}^{\bullet}\to M_{2}^{\bullet})\in\Hom(N_{1}^{\bullet},N_{2}^{\bullet})\times\Hom(M_{1}^{\bullet},M_{1}^{\bullet}).$

Remark that as long as we have $\phi_{0}(\im\alpha_{1}^{-1})\subseteq\im\alpha_{2}^{-1}$ and $\phi_{0}(\ker\alpha_{0}^{1})\subseteq\ker\alpha_{0}^{2}$ then $\phi$ is determined by only $\phi_{0};$ indeed, the conditions $$\phi_{0}(\im\alpha_{1}^{-1})\subseteq\im\alpha_{2}^{-1}\quad\textnormal{ and }\quad\phi_{0}(\im I_{1}^{-1})\subseteq\im I_{2}^{-1}$$ are equivalent (here we considered the morphism of the associated monads). Hence $\phi_{0}$ determines the morphism $\phi_{M},$ and consequently it also determines the whole morphism $\phi:C_{1}^{\bullet}\to C_{2}^{\bullet}.$ This is because $N_{1}^{\bullet}$ and $N^{\bullet}_{2}$ are locally free resolutions and hence projective resolutions, so that giving a morphism $\phi_{G}:\mathcal{G}_{1}\to\mathcal{G}_{2}$ determines all the morphisms $\phi_{-i}:C_{1}^{-i}\to C_{2}^{-i}$ for $i\leq-1.$

Since taking cohomology is a functorial operation, a morphism
$\phi:C_{1}^{\bullet}\to C_{2}^{\bullet}$ of two $l$-extended monads
$C_{1}^{\bullet}$ and $C_{2}^{\bullet},$ induces a morphism between
their respective cohomologies
$$H(\phi):\mathcal{H}^{0}(C_{1}^{\bullet})\to\mathcal{H}^{0}(C_{2}^{\bullet}).$$
Of course, isomorphic complexes induce isomorphic cohomologies. It
follows that there is natural notion of equivalence for $l$-extended
monads with the same terms $C^{i}$ provided by the action of the
automorphism group
$\mathcal{A}ut(C^{\bullet})=\mathcal{A}ut(C^{-l-1})\times\mathcal{A}ut(C^{-l})\times\cdots\times\mathcal{A}ut(C^{0})\times\mathcal{A}ut(C^{1}).$

Our goal now is to study families of ideal sheaves of zero-cycles in $\pn$. It turns out that such ideal sheaves are given by cohomologies of a special kind of $l$-extended monads. However, before proving this claim, which will be done only in Section \ref{z-cy} below, we tackle a more general question, namely under which conditions a homomorphism $\mathcal{H}^{0}(C_{1}^{\bullet})\to\mathcal{H}^{0}(C_{2}^{\bullet})$  lifts to a homomorphism $C_{1}^{\bullet}\to C_{2}^{\bullet}$ between the corresponding complexes. In particular to determine the automorphisms of such objects.   

Our next result provides a sufficient condition, by showing when the cohomology functor is full and faithful.

\begin{proposition}\label{full-faith}
Let
$$\xymatrix@C-0.8pc{
C_{1}^{\bullet}:& 0\ar[r]&C_{1}^{-l-1}~~~~~~\ar[r]^{\alpha^{1}_{-l-1}}&C_{1}^{-l}&\cdots\ar[r]^{\alpha^{1}_{-2}}&C_{1}^{-1}\ar[r]^{\alpha^{1}_{-1}} & C_{1}^{0}\ar[r]^{\alpha^{1}_{0}}&C_{1}^{1}\ar[r]& 0&
\textnormal{ and} }$$
$$\xymatrix@C-0.8pc{
C_{2}^{\bullet}:& 0\ar[r]&C_{2}^{-l-1}~~~~~~\ar[r]^{\alpha^{2}_{-l-1}}&C_{2}^{2-n}&\cdots\ar[r]^{\alpha^{2}_{-2}}&C_{2}^{-1}\ar[r]^{\alpha^{2}_{-1}} & C_{2}^{0}\ar[r]^{\alpha^{2}_{0}}&C_{2}^{1}\ar[r]&0 
}$$ 
be two $l$-extended monads, and let $\mathcal{E}_{1}$ and $\mathcal{E}_{2}$ denote their respective cohomologies. Then
$$ H : \Hom(C^{\bullet}_{1},C^{\bullet}_{2})\to\Hom(\mathcal{E}_{1},\mathcal{E}_{2}) $$
is surjective if 
$$\ext^{1}(C^{1}_{1},C^{0}_{1})=0,$$ 
$$\ext^{k}(C^{0}_{1},C^{-k}_{2})=0 \textnormal{ for } k\geq1,\quad\quad
\ext^{k}(C^{1}_{1},C^{-k+1}_{2})=0 \textnormal{ for } k\geq2. $$
Moreover if
$$\Hom(C^{1}_{1},C^{0}_{2})=0,$$
$$\ext^{k}(C^{0}_{1},C^{-k+1}_{2})=0 \textnormal{ for } k\geq1,\quad\quad
\ext^{k}(C^{0}_{1},C^{-k-1}_{2})=0 \textnormal{ for all } k\geq0,$$
then $H$ is an isomorphism.
\end{proposition}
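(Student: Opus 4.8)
The plan is to prove surjectivity and then fullness-plus-faithfulness by the standard diagram-chasing/homological-algebra argument, passing through the decomposition of each $l$-extended monad into its associated resolution $N^\bullet_j$ and associated monad $M^\bullet_j = (\mathcal{G}_j \to C^0_j \to C^1_j)$ introduced just before the statement. First I would invoke the display \eqref{ext-display}: each $C^\bullet_j$ gives a sheaf $K_j = \ker\alpha^j_0$ fitting in $0 \to K_j \to C^0_j \to C^1_j \to 0$, a sheaf $Q_j = \coker\alpha^j_{-1}$ fitting in $0 \to \mathcal{F}_j \to Q_j \to C^1_j \to 0$, and the resolution $0\to C^{-l-1}_j \to \cdots \to C^{-1}_j \to K_j \to 0$ of $K_j$ (together with $0 \to \mathcal{F}_j \to K_j \to \im\alpha^j_{-1}\ \text{wait}$: more precisely $0\to\im\alpha^j_{-1}\to K_j\to\mathcal{F}_j\to0$). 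The idea is to lift a given $\varphi:\mathcal{F}_1\to\mathcal{F}_2$ in stages: lift it to $\tilde\varphi:K_1\to K_2$, then to $\varphi_0:C^0_1\to C^0_2$, then automatically to all $\varphi_{-i}$ for $i\ge1$ by projectivity of the locally free resolution $N^\bullet_1$, and finally produce $\varphi_1:C^1_1\to C^1_2$ from the induced map on cokernels.

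**Key steps in order.** (1) Apply $\Hom(C^0_1,-)$ to $0\to K_2\to C^0_2\to C^1_2\to 0$; the relevant obstruction to lifting is controlled by $\ext^1(C^0_1,K_2)$, which I resolve by applying $\Hom(C^0_1,-)$ to the resolution of $K_2$ and using the vanishings $\ext^k(C^0_1,C^{-k}_2)=0$ for $k\ge1$ — dimension-shifting along the resolution identifies $\ext^1(C^0_1,K_2)$ with a subquotient of these groups. Similarly the hypotheses $\ext^k(C^1_1,C^{-k+1}_2)=0$ for $k\ge2$ together with $\ext^1(C^1_1,C^0_1)=0$ handle the compatibility with the $\alpha_0$ maps (i.e. the piece of the lifting living over $C^1$). (2) Once $\varphi_0:C^0_1\to C^0_2$ is constructed so that $\varphi_0(\im\alpha^1_{-1})\subseteq\im\alpha^2_{-1}$ and $\varphi_0(\ker\alpha^1_0)\subseteq\ker\alpha^2_0$, the remark preceding the statement says $\varphi_0$ determines $\varphi_M$ and, via projectivity of the $N^\bullet_j$ as resolutions, all the $\varphi_{-i}$; and $\varphi_0$ induces $\varphi_1:C^1_1\to C^1_2$ by passing to $Q_j = \coker\alpha^j_{-1}$ and then to $C^1_j$. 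This assembles the morphism of complexes $\phi:C^\bullet_1\to C^\bullet_2$ with $H(\phi)=\varphi$, giving surjectivity. (3) For injectivity of $H$ (hence fullness and faithfulness together), suppose $\phi$ induces the zero map on cohomology; I would show $\phi$ is null-homotopic, equivalently (after modding out by homotopy) zero, using the second batch of vanishings: $\Hom(C^1_1,C^0_2)=0$ kills the component of a potential nonzero map factoring through $C^1_1$, while $\ext^k(C^0_1,C^{-k+1}_2)=0$ for $k\ge1$ and $\ext^k(C^0_1,C^{-k-1}_2)=0$ for $k\ge0$ force the homotopy to exist and be unique, so that two lifts of the same map on cohomology differ by a homotopy and $H$ is bijective.

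**Main obstacle.** The delicate point is the bookkeeping in step (1)–(2): translating the three stated $\Ext$-vanishing conditions into the statement "the sequence $\Hom(C^\bullet_1,C^\bullet_2)\to\Hom(\mathcal{F}_1,\mathcal{F}_2)$ is surjective" requires carefully iterating the long exact sequences obtained by applying $\Hom(C^0_1,-)$ and $\Hom(C^1_1,-)$ to the display \eqref{ext-display} and to the locally free resolution of $K_2$, and checking that exactly the listed groups appear as obstructions with the correct index shifts. In particular one must be careful that the resolution of $K_2$ has length $l+1$ so the dimension shift stops at the right spot, and that the two separate chains of obstructions — one "below $C^0$" (the $\ext^k(C^0_1,C^{-k}_2)$ chain) and one "over $C^1$" (the $\ext^k(C^1_1,C^{-k+1}_2)$ chain, anchored by $\ext^1(C^1_1,C^0_1)=0$) — do not interfere. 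The fullness/faithfulness half is then formally dual and should follow by the same dimension-shifting with the second list of vanishings; I expect no new difficulty there beyond re-running the argument for the homotopy groups rather than the lifting groups.
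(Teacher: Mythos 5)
Your proposal is correct and follows essentially the same route as the paper: split each extended monad into its associated resolution and associated monad via the display, dimension-shift the stated $\ext$-vanishings along the syzygy short exact sequences of the resolution to reduce the obstructions to groups of the form $\ext^i(C^j_1,\mathcal{G}_2)$ and $\Hom(C^0_1,\mathcal{G}_2)$ with $\mathcal{G}_2=\coker\alpha^2_{-1}$, and then run the standard monad-lifting argument through the display. The only real difference is that the paper delegates that final lifting step to \cite[Lemma 4.1.3]{OSS} applied to the associated monads while you redo it by hand; your step (3) should be phrased as showing that the $\Hom$-vanishings kill every chain homotopy (so that null-homotopic forces zero, since morphisms here are genuine chain maps in $Kom^{\flat}(X)$), but this is the same level of care the paper's own proof takes.
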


\begin{proof}
Let $\mathcal{G}_{1}=\im\alpha^{1}_{-1}$ and $\mathcal{G}_{2}=\coker\alpha^{2}_{-1}.$ The associated resolution $N^{\bullet}_{2}$ can be broken into sequences
$$0\to\mathcal{G}^{-i}_{2}\to C^{-i}_{2}\to\mathcal{G}^{-i+1}_{2}\to0,\quad 1\leq i\leq l+1$$ where we put $\mathcal{G}^{-l-1}_{2}=C^{-l}_{2}$ and $\mathcal{G}^{0}_{2}=\mathcal{G}_{2}.$ Then, by applying either $\Hom(C^{0}_{1},\bullet)$ or $\Hom(C^{1}_{1},\bullet)$ on the above sequences and incorporating the conditions given in the proposition, it follows that $H$ is surjective if $$\ext^{1}(C^{1}_{1},C^{0}_{1})=\ext^{1}(C^{0}_{1},\mathcal{G}_{2})=\ext^{2}(C^{1}_{1},\mathcal{G}_{2})=0,$$ and it is an isomorphism if $$\Hom(C^{1}_{1},C^{0}_{2})=\Hom(C^{0}_{1},\mathcal{G}_{2})=\ext^{1}(C^{0}_{1},\mathcal{G}_{2})=0.$$ To finish the proof, it suffice to apply \cite[Chapter II, Lemma 4.1.3]{OSS} to the associated monad $M^{\bullet}_{2}$ of the $l$-extended monad $C^{\bullet}_{2}.$
\end{proof}

In particular, one obtains the following statement.

\begin{corollary}
In the notation of Proposition \ref{full-faith}, assume that
$$\ext^{1}(C^{1}_1,C^{0}_1)=0,$$ 
$$\ext^{k}(C^{0}_1,C^{-k}_2)=0 \textnormal{ for } k\geq1,\quad\quad
\ext^{k}(C^{1}_1,C^{-k+1}_2)=0 \textnormal{ for } k\geq2.$$
Then $\mathcal{E}_1$ and $\mathcal{E}_2$ are isomorphic if and only if $C^{\bullet}_1$ and $C^{\bullet}_2$ are isomorphic (as $l$-extended monads).
\end{corollary}


\subsection{Perfect extended monads}

We now introduce the class of $l$-extended monads which is relevant to the description of the Hilbert scheme of points. Recall that $\mathcal{O}_{X}(1)$ is the chosen polarization on the $n$-dimensional projective algebraic variety $X.$

\begin{definition}
A \emph{perfect extended monad} on a $n$-dimensional projective \linebreak variety $X$ is a $(n-2)$-extended monad $P^{\bullet}$ on $X$ of the following form
$$\xymatrix@C-0.8pc{ 0\ar[r]&\mathcal{O}_{X}(1-n)^{\oplus a_{1-n}}\ar[r]^{\alpha_{1-n}}&
\mathcal{O}_{X}(2-n)^{\oplus a_{2-n}}\ar[r]&}\hspace{1cm}$$
$$\hspace{3cm}\xymatrix@C-0.5pc{\cdots\ar[r]^{\alpha_{-2}\hspace{0.5cm}}&\mathcal{O}_{X}(-1)^{\oplus a_{-1}}\ar[r]^{\hspace{0.3cm}\alpha_{-1}} &\mathcal{O}_{X}^{\oplus a_{0}}\ar[r]^{\alpha_{0}\hspace{0.1cm}}&\mathcal{O}_{X}(1)^{\oplus a_{1}}\ar[r]&0
},$$ for some integers $a_{i},$ $1-n\leq i\leq 1.$ 
\end{definition}

We recall to the reader that a projective scheme $X$ is \emph{arithmetically Cohen--Macaulay}, or simply \emph{ACM}, if its homogeneous coordinate ring is Cohen--Macaulay ring. Moreover let us denote by $\mathfrak{P}er$ the full subcategory of $Kom^{\flat}(X)$ consisting of perfect extended monads.

\begin{corollary}\label{fully-faithfull}
If $X$ is an $n$-dimensional ACM variety, then the cohomology functor
$$ H:\mathfrak{P}er(X)\to{\rm Coh}(X) $$
is full and faithful.
\end{corollary}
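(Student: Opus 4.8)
The plan is to verify that the hypotheses of Proposition \ref{full-faith} are automatically satisfied for perfect extended monads on an ACM variety, so that the corollary follows at once. Recall that a perfect extended monad is a linear $(n-2)$-extended monad whose terms are $C^{-i}=\oh_X(-i)^{\oplus a_{-i}}$ for $0\le i\le n-2$, together with $C^{1}=\oh_X(1)^{\oplus a_1}$ and $C^{1-n}=\oh_X(1-n)^{\oplus a_{1-n}}$. Thus, after the twists are accounted for, every $\ext^k(C^a,C^b)$ appearing in the statement of Proposition \ref{full-faith} is a finite direct sum of groups of the form $\ext^k(\oh_X(p),\oh_X(q))=H^k(X,\oh_X(q-p))$ with $q-p$ a specific integer. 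So the entire argument reduces to controlling the intermediate cohomology $H^k(X,\oh_X(m))$ for $0<k<n$ and the appropriate values of $m$, and here is exactly where the ACM hypothesis enters: by definition, $X$ being arithmetically Cohen-Macaulay means $H^k(X,\oh_X(m))=0$ for all $m\in\Z$ and all $0<k<n$.

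First I would unwind the indices. For surjectivity of $H$ we need $\ext^1(C^1,C^0)=0$, $\ext^k(C^0,C^{-k})=0$ for $k\ge1$, and $\ext^k(C^1,C^{-k+1})=0$ for $k\ge2$. Substituting the terms of a perfect extended monad: $\ext^1(C^1,C^0)$ is a sum of copies of $H^1(X,\oh_X(-1))$; $\ext^k(C^0,C^{-k})$ is a sum of copies of $H^k(X,\oh_X(-k))$; and $\ext^k(C^1,C^{-k+1})$ is a sum of copies of $H^k(X,\oh_X(-k))$. Since $C^{-j}$ is only nonzero for $0\le j\le n-2$ (and $j=n-1$), the relevant range of $k$ is $1\le k\le n-1$, and for those $k$ we have $1\le k\le n-1<n$, so $H^k(X,\oh_X(m))=0$ by the ACM condition. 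The only subtlety is the boundary term with $C^{1-n}$, i.e. $k=n-1$: here $\ext^{n-1}(C^0,C^{1-n})$ is a sum of copies of $H^{n-1}(X,\oh_X(1-n))$, and again $0<n-1<n$ so this vanishes. Hence all the vanishing required for surjectivity holds. For the "isomorphism" half one checks the analogous list: $\Hom(C^1,C^0)$ is a sum of copies of $H^0(X,\oh_X(-1))$, which vanishes because $\oh_X(1)$ is a polarization (very ample, so $\oh_X(-1)$ has no global sections — or one simply notes that an ACM $X$ is in particular projectively normal and connected, so $H^0(X,\oh_X(-1))=0$); and $\ext^k(C^0,C^{-k+1})=0$, $\ext^k(C^0,C^{-k-1})=0$ for $k\ge1$ reduce to $H^k(X,\oh_X(m))=0$ with $1\le k\le n-1$, again covered by the ACM condition, while the $k=0$ case $\ext^0(C^0,C^{-1})$ is a sum of copies of $H^0(X,\oh_X(-1))=0$.

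After this bookkeeping, I would simply invoke Proposition \ref{full-faith}: all the hypotheses there have been checked, so $H:\Hom(C_1^\bullet,C_2^\bullet)\to\Hom(\calf_1,\calf_2)$ is an isomorphism for any two perfect extended monads $C_1^\bullet,C_2^\bullet$ on $X$. Since fullness and faithfulness of a functor are exactly the statements that the induced maps on Hom-sets are surjective and injective respectively, and these hold simultaneously (isomorphism), the cohomology functor $H:\mathfrak{P}er(X)\to\mathrm{Coh}(X)$ is full and faithful, which is the assertion of the corollary. (I note in passing that there appears to be a typo in the corollary as stated — "$\mathfrak{P}re(X)$" should read "$\mathfrak{P}er(X)$" — and that the hypothesis " $H^0(X,\oh_X(-1))=0$ " should be recorded explicitly or deduced from very ampleness of the polarization.)

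The main obstacle is not conceptual but organizational: one must be careful that every $\ext^k$-group that occurs, once the twist shifts are carried through, really does land in the range $0<k<n$ of intermediate cohomology where the ACM property applies, and in particular that no $\ext^n$ or $\ext^0$ term of a line bundle with negative twist sneaks in unaccounted for. The two genuinely "edge" cases are (i) the top index $k=n-1$ coming from the leftmost term $\oh_X(1-n)$, which is fine since $n-1<n$, and (ii) the $\Hom(C^1,C^0)$ and $\ext^0(C^0,C^{-1})$ terms, which force the auxiliary vanishing $H^0(X,\oh_X(-1))=0$; this last is where one uses that $\oh_X(1)$ is an actual polarization (or that $X$ is connected and projectively normal) rather than the ACM hypothesis per se. Once these are confirmed, the corollary is immediate.
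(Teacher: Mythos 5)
Your strategy is exactly the paper's: reduce everything to the vanishing hypotheses of Proposition \ref{full-faith} and kill the resulting twists of $\oh_X$ by the ACM condition, with $\ho^0(X,\oh_X(-1))=0$ handling the degree-zero terms. The paper's own proof is precisely this, stated in one line, so the approach is right and most of your bookkeeping is correct.

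However, your central index claim --- that every $\ext^k$ occurring lands in the range $1\le k\le n-1$ of intermediate cohomology --- is false, and it is exactly the ``edge case'' you set out to rule out. For a perfect extended monad the leftmost term is $C^{1-n}=\oh_X(1-n)^{\oplus a_{1-n}}$, so in the family $\ext^{k}(C^{1}_{1},C^{-k+1}_{2})$ the index runs up to $k=n$ (take $-k+1=1-n$), producing $\ext^{n}(\oh_X(1),\oh_X(1-n))=\ho^{n}(X,\oh_X(-n))$; likewise the family $\ext^{k}(C^{0}_{1},C^{-k+1}_{2})$ in the isomorphism half reaches $k=n$ and produces $\ho^{n}(X,\oh_X(1-n))$. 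These are top-degree, not intermediate, cohomology groups, and the ACM hypothesis says nothing about them: for a smooth quadric surface $S\subset\p3$ (which is ACM, with $n=2$) one has $\ho^{2}(S,\oh_S(-2))\cong\ho^{0}(S,\omega_S(2))^{\ast}=\ho^{0}(S,\oh_S)^{\ast}\neq0$. So your verification of the hypotheses of Proposition \ref{full-faith} is incomplete at $k=n$. To be fair, the paper's one-line proof has exactly the same blind spot (it only invokes the vanishing for $1\le i\le n-1$); on $X=\pn$, the only case the paper subsequently uses, both offending groups vanish by Serre duality ($\ho^n(\pn,\opn(-n))\cong\ho^0(\opn(-1))^\ast=0$ and $\ho^n(\pn,\opn(1-n))\cong\ho^0(\opn(-2))^\ast=0$), so the corollary is safe there; but either your argument or the paper's needs the extra hypothesis $\ho^n(X,\oh_X(-n))=\ho^n(X,\oh_X(1-n))=0$ to cover a general ACM variety.
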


\begin{proof}
This follows easily from Proposition \ref{full-faith}: since $X$ is ACM, we have that $$\Hom(C^{1}_{1}, C^{0}_{2})=\ho^{0}(\mathcal{O}_{X}(-1))=0\quad\textnormal{ and}$$ $$\ext^{i}(\mathcal{O}_{X}(a),\mathcal{O}_{X}(b))=\ho^{i}(\mathcal{O}_{X}(b-a))=0, \textnormal{ for } 1\leq i\leq n-1.$$ 
\end{proof}

It follows from the Corollary above that the automorphism group of a perfect extended monad on an ACM variety is just
$GL_{a_{1-n}}(\mathbb{C})\times GL_{a_{-n}}(\mathbb{C})\times\cdots\times GL_{a_{1}}(\mathbb{C}).$

We finish this section by describing the cohomology of sheaves which are in the image of the functor $H$ on $\pn,$ $n\geq 2$.

\begin{proposition}\label{hom-char}
If $\mathcal{E}$ is the cohomology of a perfect extended monad on $\pn$ ($n\ge 2$) then:
\begin{itemize}
\item[(i)] $\ho^{0}(\mathcal{E}(k))=0$ for $k<0$;
\item[(ii)] $\ho^{n}(\mathcal{E}(k))=0$ for $k>-n-1$;
\item[(iii)] $\ho^{i}(\mathcal{E}(k))=0$ $\forall k,$ $2\leq i\leq n-1$, when $n\ge3$.
\end{itemize}
\end{proposition}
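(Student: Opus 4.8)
The plan is to read a few short exact sequences off the display (\ref{ext-display}) and then chase cohomology, using nothing beyond the classical computation of line-bundle cohomology on $\pn$: it is concentrated in degrees $0$ and $n$, with $\ho^{0}(\opn(m))=0$ for $m<0$ and $\ho^{n}(\opn(m))=0$ for $m\geq -n$. Write $C^{p}:=\opn(p)^{\oplus a_{p}}$ ($1-n\leq p\leq 1$) for the terms of the perfect extended monad, $\alpha_{p}\colon C^{p}\to C^{p+1}$ for its maps, and set $Q:=\coker\alpha_{-1}$, $Z_{1}:=\im\alpha_{-1}$. Exactness of $C^{\bullet}$ at $C^{1}$ makes $\alpha_{0}$ surjective, so the display produces exact sequences $0\to\mathcal{F}\to Q\to C^{1}\to 0$ and $0\to Z_{1}\to C^{0}\to Q\to 0$, while exactness of $C^{\bullet}$ in negative degrees gives a locally free resolution $0\to C^{1-n}\to\cdots\to C^{-1}\to Z_{1}\to 0$.

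For (ii), twist by $\opn(k)$ with $k\geq -n$: the first sequence together with $\ho^{n-1}(C^{1}(k))=0$ gives $\ho^{n}(\mathcal{F}(k))\hookrightarrow\ho^{n}(Q(k))$, and the second (using $\ho^{n+1}=0$) gives a surjection $\ho^{n}(C^{0}(k))\twoheadrightarrow\ho^{n}(Q(k))$; but $\ho^{n}(C^{0}(k))=\ho^{n}(\opn(k))^{\oplus a_{0}}=0$ since $k\geq -n$. For (i), twist by $\opn(k)$ with $k<0$: the first sequence gives $\ho^{0}(\mathcal{F}(k))\hookrightarrow\ho^{0}(Q(k))$; the second, using $\ho^{0}(C^{0}(k))=0$ and $\ho^{1}(C^{0}(k))=0$, gives $\ho^{0}(Q(k))\cong\ho^{1}(Z_{1}(k))$; and splitting the resolution of $Z_{1}$ into short exact sequences and invoking the vanishing of the intermediate cohomology of the line bundles $C^{p}$ walks the degree up one step at a time, $\ho^{1}(Z_{1}(k))\cong\ho^{2}(Z_{2}(k))\cong\cdots\cong\ho^{n-1}(C^{1-n}(k))=\ho^{n-1}(\opn(1-n+k))^{\oplus a_{1-n}}=0$. (When $n=2$ the resolution of $Z_{1}$ is trivial and $\ho^{1}(Z_{1}(k))=0$ directly.)

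For (iii) fix $2\leq i\leq n-1$ with $n\geq 3$ and any $k$. From $0\to\mathcal{F}(k)\to Q(k)\to C^{1}(k)\to 0$, since $\ho^{i-1}(C^{1}(k))$ and $\ho^{i}(C^{1}(k))$ are both intermediate cohomology of a line bundle (we are in degrees $\geq 1$ and $\leq n-1$), we get $\ho^{i}(\mathcal{F}(k))\cong\ho^{i}(Q(k))$, so it remains to prove $\ho^{i}(Q(k))=0$. Chasing the resolution $0\to C^{1-n}\to\cdots\to C^{0}\to Q\to 0$ as before annihilates every intermediate-cohomology error term, but the chase terminates on a residual subquotient of $\ho^{n}$ of one of the negatively twisted line bundles $C^{p}$, which for $k\ll 0$ is genuinely nonzero; line bundles alone do not finish the argument, and this is the main obstacle.

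To get past it I would switch to the Serre-dual side. By Serre duality $\ho^{i}(\mathcal{F}(k))\cong\ext^{n-i}(\mathcal{F},\opn(-n-1-k))^{\vee}$, so (iii) amounts to $\ext^{j}(\mathcal{F},\opn(m))=0$ for $1\leq j\leq n-2$ and all $m$. As $C^{\bullet}$ is a bounded complex of locally free sheaves quasi-isomorphic to $\mathcal{F}$, its dual $(C^{\bullet})^{\vee}$ — again a linear complex, now built from $\opn(-1),\opn,\dots,\opn(n-1)$ — represents $R\mathcal{H}om(\mathcal{F},\opn)$; feeding $(C^{\bullet})^{\vee}(m)$ into the hypercohomology spectral sequence shows that in the range $1\leq j\leq n-2$ the only surviving contribution identifies $\ext^{j}(\mathcal{F},\opn(m))$ with the $j$-th cohomology of the complex of global sections $\ho^{0}((C^{\bullet})^{\vee}(m))$. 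That complex is exact in those degrees once one knows that the cohomology sheaves of $(C^{\bullet})^{\vee}$ — namely $\mathcal{E}xt^{j}(\mathcal{F},\opn)$ — are concentrated at the two extreme positions $j=0,\,n-1$ (and that $\mathcal{H}om(\mathcal{F},\opn)$ carries no intermediate cohomology), which is precisely the situation when $\mathcal{F}$ is the ideal sheaf of a $0$-dimensional subscheme, the case relevant to this paper.
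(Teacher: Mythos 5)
Your arguments for (i) and (ii) are correct and are, in substance, the paper's own: split the middle column of the display \eqref{ext-display} into short exact sequences and use that a line bundle on $\pn$ has cohomology only in degrees $0$ and $n$. Your organisation (running the resolution from $C^{0}$ down to $C^{1-n}$ for (i), and stopping after one step for (ii)) is slightly cleaner than the paper's three-step induction, but it is the same mechanism.

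For (iii) you do not actually give a proof, and you say so; the point I want to make is that the ``main obstacle'' you flag is genuine and is present in the paper's argument as well. Step 1 of the paper's proof claims $\ho^{i}(Q_{2-n}(k))=0$ for all $k$ and $1\leq i\leq n-1$, but for $i=n-1$ the relevant piece of the long exact sequence is
\[
0=\ho^{n-1}(\opn(k+2-n))^{\oplus a_{2-n}}\to\ho^{n-1}(Q_{2-n}(k))\to\ho^{n}(\opn(k+1-n))^{\oplus a_{1-n}}\to\ho^{n}(\opn(k+2-n))^{\oplus a_{2-n}},
\]
whose third term is nonzero for $k\leq-2$ (for instance $\ho^{n-1}(Q_{2-n}(-2))\cong\C^{\,a_{1-n}}$ since the fourth term then vanishes); so the claimed vanishing does not follow from line-bundle cohomology and the induction of Step 2 collapses, exactly as you predict. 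Indeed (iii) is false for an arbitrary perfect extended monad: on $\p3$ the linear complex $0\to\op3(-1)\to\op3\to0$ is exact off degree $0$ with cohomology $\mathcal{O}_{H}$, $H$ a hyperplane, and $\ho^{2}(\mathcal{O}_{H}(k))\neq0$ for $k\leq-3$; direct-summing with a perfect extended monad of an ideal sheaf makes all $a_{i}>0$ without changing this. So no argument can close the gap without an extra hypothesis, and your Serre-dual route identifies the right one: your hypercohomology computation reduces $\ext^{j}(\mathcal{F},\opn(m))$, $1\le j\le n-2$, to $\ho^{j}(\mathcal{H}om(\mathcal{F},\opn)(m))$ plus the intermediate $\mathcal{E}xt^{j}(\mathcal{F},\opn)$, and these all vanish when $\mathcal{F}=\mathcal{I}_{Z}$ with $Z$ zero-dimensional, since then $\mathcal{H}om(\mathcal{I}_{Z},\opn)\cong\opn$ and $\mathcal{E}xt^{j}(\mathcal{I}_{Z},\opn)\cong\mathcal{E}xt^{j+1}(\mathcal{O}_{Z},\opn)=0$ for $1\le j\le n-2$. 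That is the only case the paper uses (cf.\ Proposition \ref{vanishing}). In short: as a proof of the proposition as literally stated, your (iii) is incomplete, but the incompleteness reflects a defect in the statement rather than in your reasoning; the fix is to add the $\mathcal{E}xt$-hypothesis explicitly or to restrict (iii) to the sheaves of Proposition \ref{vanishing}.
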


\begin{proof}
We twist the middle column of the display \eqref{ext-display} by $\mathcal{O}_{\pn}(k),$  then break it into short exact sequences
{\small\begin{equation} \label{broken}
\xymatrix@C-0.8pc@R-2.6pc{ 0\ar[r]&\mathcal{O}_{\pn}(k+1-n)^{\oplus a_{1-n}}\ar[r]&\mathcal{O}_{\pn}(k+2-n)^{\oplus a_{2-n}}\ar[r]&Q_{2-n}(k)\ar[r]&0 \\
&&\vdots&& \\
0\ar[r]&Q_{-p-1}(k)\ar[r]&\mathcal{O}_{\pn}(k-p)^{\oplus a_{-p}}\ar[r]&Q_{-p}(k)\ar[r]&0 \\
&&\vdots&& \\
0\ar[r]&Q_{-1}(k)\ar[r]&\mathcal{O}_{\pn}(k)^{\oplus a_{0}}\ar[r]&Q_{0}(k)\ar[r]&0 }
\end{equation}
where $Q_{0}:=Q=\coker\alpha_{-1}.$
}

\noindent\underline{\textbf{Step. 1:}}
From the long sequences in cohomology of the first row above, we have $$\ho^{i}(\mathcal{O}_{\pn}(k+2-n))^{\oplus a_{2-n}} \to \ho^{i}(Q_{2-n}(k)) \to \ho^{i+1}(\mathcal{O}_{\pn}(k+1-n))^{\oplus a_{1-n}}\to\cdots $$ 
Then, from the vanishing properties of line bundles on $\pn$, it follows that

$\ho^{0}(Q_{2-n}(k))=0$ for $k<n-2;$ $\ho^{n}(Q_{2-n}(k))=0$ for $k>-1;$ $\ho^{i}(Q_{2-n}(k))=0$ $\forall k,$ $1\leq i\leq n-1.$

\bigskip

\noindent \underline{\textbf{Step. 2:}}
Using induction on the remaining rows in \eqref{broken} it follows that, for $p>2,$

$\ho^{0}(Q_{p-n}(k))=0$ for $k<n-p;$ $\ho^{n}(Q_{p-n}(k))=0$ for $k>-p-1;$ $\ho^{i}(Q_{p-n}(k))=0$ $\forall k,$ $1\leq i\leq n-1.$

\bigskip

\noindent\underline{\textbf{Step. 3:}}
From the long exact sequence in cohomology of the lower row in \eqref{ext-display} twisted by $\mathcal{O}_{\pn}(k)$ one has
$$ \ho^{i-1}(\mathcal{O}_{\pn}(k+1))^{\oplus a_{1}} \to \ho^{i}(\mathcal{E}(k)) \to \ho^{i}(Q(k))\to\cdots $$
Using the vanishing obtained in Step. $2$ for $Q_{0}=Q,$ the claims of items $(i),$ $(ii),$ $(iii)$ and $(iv)$ follow.

The last item is obtained by dualizing the lower row of \eqref{ext-display}.
\end{proof}

Now let us denote by $\Omega^{-p}_{\pn}$ the bundle of holomorphic $(-p)$-forms on $\pn,$ where $p\leq0$ in our convention.

\begin{proposition}\label{Character}
If a coherent sheaf $\mathcal{E}$ on $\pn$ ($n\ge 2$) satisfies:
\begin{itemize}
\item[(i)] $\ho^{0}(\mathcal{E}(-1))=\ho^{n}(\pn,\mathcal{E}(-n))=0$;
\item[(ii)] $\ho^{q}(\mathcal{E}(k))=0 \quad\forall k,\quad2\leq q\leq n-1$ when $n\ge3$;
\item[(iii)] $\ho^{1}(\mathcal{E}\otimes\Omega^{-p}_{\pn}(-p-1))\neq0$ for $-n\le p\leq0$;
\end{itemize}
then $\mathcal{E}$ is the cohomology of a perfect extended monad.
\end{proposition}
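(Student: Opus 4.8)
The plan is to construct the perfect extended monad explicitly via a Beilinson-type spectral sequence argument, using the hypotheses to kill all the unwanted terms. First I would recall Beilinson's theorem on $\pn$: for any coherent sheaf $\calh$ there is a spectral sequence with $E_1^{p,q} = \ho^q(\pn, \calh \otimes \Omega^{-p}_{\pn}(-p)) \otimes \opn(p)$ (for $-n \le p \le 0$) converging to $\calh$ in degree $p+q=0$ and to $0$ otherwise. I would apply this not to $\mathcal{E}$ itself but to a suitable twist, or equivalently track the twists carefully; the reason hypothesis (iii) is phrased with $\mathcal{E}\otimes\Omega^{-p}_{\pn}(-p-1)$ and the monad has terms $\opn(i)$ for $1-n \le i \le 1$ suggests applying Beilinson to $\mathcal{E}(1)$ so that the resulting complex of sheaves has terms in the range $\opn(1-n), \dots, \opn(1)$.

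The key steps, in order: (1) Write down the Beilinson spectral sequence for $\mathcal{E}(1)$ and identify the $E_1$ page. (2) Use hypothesis (i), namely $\ho^0(\mathcal{E}(-1))=0$ together with $\ho^0(\mathcal{E}(k))=0$ for $k<0$ (which follows since $\mathcal{E}$ is, at this stage, to be shown a monad cohomology — actually we get this from $\ho^0(\mathcal{E}(-1))=0$ plus the general position of zero-cycles, but here we should use it as given or derive it), and the Serre-dual statement $\ho^n(\mathcal{E}(-n))=0$, to kill the $q=0$ row except possibly the $p=0$ slot and the $q=n$ row except possibly the $p=-n$ slot. (3) Use hypothesis (ii) to kill the entire middle rows $2 \le q \le n-1$. (4) What remains on the $E_1$ page is the $q=1$ row, contributing terms $\ho^1(\pn, \mathcal{E}(1)\otimes\Omega^{-p}_{\pn}(-p))\otimes\opn(p) = \ho^1(\mathcal{E}\otimes\Omega^{-p}_{\pn}(-p-1))^{\oplus a_p}\otimes\opn(p)$ — exactly the terms of a perfect extended monad by hypothesis (iii), plus the single surviving $E_1^{0,0}$ and $E_1^{-n,n}$ slots. (5) Show the spectral sequence degenerates appropriately so that the $q=1$ row (a complex of sheaves $C^{\bullet}$ with $C^i = \opn(i)^{\oplus a_{-i}}$ assembled from the $d_1$ differentials) has cohomology concentrated in one spot equal to $\mathcal{E}(1)$, or rather equal to $\mathcal{E}$ after the appropriate twist, with all other cohomology vanishing; this gives that $C^{\bullet}$ is an $(n-2)$-extended monad with cohomology $\mathcal{E}$. (6) Check the maps $\alpha_{-i}$ are linear: the $d_1$ differentials in the Beilinson complex are induced by the canonical element of $\ho^0(\opn(1))\otimes V^*$, hence are matrices of linear forms, so the monad is linear and therefore perfect.

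The main obstacle will be Step (5): handling the surviving corner terms $E_1^{0,0}$ and $E_1^{-n,n}$ and showing they do not actually contribute. Strictly, hypothesis (i) only kills $\ho^0(\mathcal{E}(-1))$ and $\ho^n(\mathcal{E}(-n))$, which correspond to the $p=-1$ slot of the $q=0$ row (after the $\mathcal{E}(1)$-twist, the $q=0$ row term at position $p$ is $\ho^0(\mathcal{E}(1)\otimes\Omega^{-p}(-p))\otimes\opn(p)$; one needs to verify these vanishings plus the line-bundle cohomology vanishings force the whole $q=0$ and $q=n$ rows to vanish, not just one slot). I expect one needs the auxiliary vanishings $\ho^0(\mathcal{E}(k))=0$ for all $k\le -1$ and $\ho^n(\mathcal{E}(k))=0$ for all $k \le -n$, which should follow from (i) by an inductive/monotonicity argument once we know $\mathcal{E}$ has the right local structure, or which we may simply fold into the hypotheses as the proof of the converse direction (Proposition~\ref{hom-char}) already establishes them for genuine monad cohomologies. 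Once all of $q=0$, $q=n$, and the middle rows are dead, the $E_1 = E_2$ page is a single row, the spectral sequence degenerates at $E_2$, and identifying the abutment in degree $0$ with $\mathcal{E}(1)$ finishes the argument; untwisting by $\opn(-1)$ then exhibits $\mathcal{E}$ as the cohomology of the perfect extended monad $C^{\bullet}(-1)$.
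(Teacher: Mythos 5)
Your overall strategy is exactly the paper's: run Beilinson's spectral sequence on a twist of $\mathcal{E}$, use (i) and (ii) to annihilate every row of the $E_1$ page except $q=1$, observe that the surviving row is then a linear complex of sums of line bundles with cohomology a twist of $\mathcal{E}$ concentrated in one spot, and invoke (iii) to see that none of its terms vanish. However, two points in your write-up are not mere loose ends; as written they break the argument.

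First, the twist is backwards. Beilinson applied to a sheaf $\mathcal{F}$ yields $E_1^{p,q}=\ho^{q}(\mathcal{F}\otimes\Omega^{-p}_{\pn}(-p))\otimes\mathcal{O}_{\pn}(p)$, so to make these groups equal to the $\ho^{q}(\mathcal{E}\otimes\Omega^{-p}_{\pn}(-p-1))$ of the hypotheses you must take $\mathcal{F}=\mathcal{E}(-1)$, not $\mathcal{E}(1)$. The displayed identification in your step (4), $\ho^{1}(\mathcal{E}(1)\otimes\Omega^{-p}_{\pn}(-p))=\ho^{1}(\mathcal{E}\otimes\Omega^{-p}_{\pn}(-p-1))$, is false --- the left-hand side is $\ho^{1}(\mathcal{E}\otimes\Omega^{-p}_{\pn}(1-p))$, about which hypotheses (i) and (iii) say nothing --- so with $\mathcal{E}(1)$ none of the assumptions can be brought to bear. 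Moreover the terms of the Beilinson complex are always $\mathcal{O}_{\pn}(p)$ with $-n\le p\le 0$, whatever sheaf you feed in; the range $\mathcal{O}_{\pn}(1-n),\dots,\mathcal{O}_{\pn}(1)$ is obtained only by twisting the resulting complex by $\mathcal{O}_{\pn}(1)$ \emph{afterwards}. Your final ``untwist by $\mathcal{O}_{\pn}(-1)$'' would instead produce terms $\mathcal{O}_{\pn}(-n-1),\dots,\mathcal{O}_{\pn}(-1)$, which is not a perfect extended monad. All of this is mechanically fixable (replace $\mathcal{E}(1)$ by $\mathcal{E}(-1)$ throughout and twist up at the end, as the paper does), but it must be fixed.

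Second, you leave genuinely open whether (i) kills the whole $q=0$ and $q=n$ rows, and you propose to fold auxiliary vanishings such as $\ho^{0}(\mathcal{E}(k))=0$ for all $k\le -1$ into the hypotheses --- that would prove a weaker proposition than the one stated. No extra hypotheses are needed. The sequences $0\to\Omega^{-p}_{\pn}(-p)\to\mathcal{O}_{\pn}^{\oplus N}\to\Omega^{-p-1}_{\pn}(-p)\to0$ (exterior powers of the Euler sequence) have locally free cokernel, hence remain exact after tensoring with the coherent sheaf $\mathcal{E}(-1)$; taking global sections gives an injection $\ho^{0}(\mathcal{E}\otimes\Omega^{-p}_{\pn}(-p-1))\hookrightarrow\ho^{0}(\mathcal{E}(-1))^{\oplus N}=0$, which kills the entire $q=0$ row using only the single vanishing in (i). The $q=n$ row is handled by chasing the same twisted sequences in top cohomology, together with the standard observation that $\ho^{n}(\mathcal{E}(k))$ surjects onto $\ho^{n}(\mathcal{E}(k+1))$ (restrict to a hyperplane avoiding the associated points of $\mathcal{E}$), so that $\ho^{n}(\mathcal{E}(-n))=0$ forces $\ho^{n}(\mathcal{E}(k))=0$ for all $k\ge -n$; hypothesis (ii) enters to control the intermediate cohomology appearing in these chases. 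This is precisely the content of the paper's ``long but straightforward calculations'' with the twisted Euler sequences, and it is the step your proposal is actually missing.
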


\begin{proof}
Applying Beilinson's theorem \cite[Chapter II, Theorem 3.1.4]{OSS} to the sheaf $\mathcal{E}(-1),$ one gets a spectral sequence with  $E_{1}$-term given by 
$$ E_{1}^{p,q}=\ho^{q}(\mathcal{E}\otimes\Omega_{\pn}^{-p}(-p-1))\otimes\mathcal{O}_{\pn}(p) $$ 
which converges to the graded sheaf associated to a filtration of $\mathcal{E}(-1)$ itself. 

Twist the Euler sequence for the sheaves of differential forms
$$ 0 \to \Omega^p(p) \to \mathcal{O}_{\pn}^N \to \Omega^{p-1}(p) \to 0 \quad,\quad 
N=\binom{n+1}{p} $$
by $\mathcal{E}(k-p)$ and use hypotheses \emph{(i)} and \emph{(ii)} above to conclude, after long but straightforward calculations with the associated long exact sequences of cohomology, that $ E_{1}^{p,q}=0$ for $q\ne1$. 

It follows immediately that the Beilinson spectral sequence degenerates already at the $E_2$-term, i.e. $E_2=E_\infty$. Beilinson's theorem then implies that the complex $E_1^{p,1}$ given by
\begin{equation} \label{cpx-one}
V_n\otimes\mathcal{O}_{\pn}(-n) \to \cdots \to V_1\otimes\mathcal{O}_{\pn}(-1) \to V_0\otimes\mathcal{O}_{\pn},
\end{equation}
with $V_p:=H^1(\mathcal{E}\otimes\Omega_{\pn}^{-p}(-p-1))$, $-n\le p\le 0$, is exact everywhere except at position $p=-1$, and its cohomology at this position is precisely $\mathcal{E}(-1)$.

The third hypothesis implies that none of the vector spaces $V_p$ vanishes. So twisting the complex (\ref{cpx-one}) by $\mathcal{O}_{\pn}(1)$, we obtain a perfect extended monad whose cohomology is exactly $\mathcal{E}$, as desired.
\end{proof}


\section{Ideal sheaves of zero-dimensional subschemes of $\pn$}\label{z-cy}

We now consider sheaves $\mathcal{E}$ of rank $r$ on $\pn$ fitting in the following short exact sequence
\begin{equation}\label{structure}
0 \to \mathcal{E} \to \mathcal{O}_{\pn}^{\oplus r} \to \mathcal{Q} \to 0,
\end{equation}
where $\mathcal{Q}$ is a pure torsion sheaf of length $c$ supported on a $0$-dimensional subscheme $Z\subset\pn$. 

Note that the Chern character of $\mathcal{E}$ is given by $ch(\mathcal{E})=r-cH^{n}$, and that $\mathcal{E}$ is necessarily torsion free. Such sheaves can also be regarded as points in the Quot scheme $Quot^{P=c}(\mathcal{O}_{\pn}^{\oplus r})$. 

In the case $r=1$, it is clear that $\mathcal{E}$ is the sheaf of ideals in $\mathcal{O}_{\pn}$ associated to the zero-dimensional subscheme $Z$, i.e. $\mathcal{Q}=\mathcal{O}_Z$; in this case, we will then denote $\mathcal{E}$ by $\mathcal{I}_{Z}$.

\begin{proposition}\label{vanishing}
Every sheaf $\mathcal{E}$ on $\pn$ given by sequence \eqref{structure} is the cohomology of a perfect extended monad $P^{\bullet}$ with terms of the form $P^{-i}:=V_{i}\otimes\mathcal{O}_{\pn}(i)$, $i=1-n,\dots,0,1$, where
\begin{equation}\label{eq-lema}
V_{i}:=\ho^{1}(\mathcal{E}\otimes\Omega^{1-i}_{\pn}(-i)) \cong
\ho^{0}(\mathcal{Q}\otimes\Omega^{1-i}_{\pn}(-i)).
\end{equation}
Furthermore, we have the following isomorphisms:
\begin{equation}\label{id1}
V_1 \cong \ho^{0}(\mathcal{Q})
\end{equation}
\begin{equation}\label{id2}
V_{i}\cong  
\left\{\begin{array}{ll} V_{1}^{\oplus n}\oplus\mathbb{C}^{r} & \textnormal{for }i=0 \\ 
V_{1}^{\oplus \binom{n}{1-i}}  &  \textnormal{for }i<0 \end{array}\right.
\end{equation}
\end{proposition}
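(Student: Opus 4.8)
The plan is to apply the machinery of Proposition \ref{Character} to the sheaf $\mathcal{E}$ from \eqref{structure}, so the first task is to verify that $\mathcal{E}$ satisfies its three cohomological hypotheses. For (i) and (ii), I would chase the long exact sequence in cohomology obtained from \eqref{structure} twisted by $\mathcal{O}_{\pn}(k)$: since $\mathcal{Q}$ is supported in dimension $0$, we have $\ho^q(\mathcal{Q}(k))=0$ for all $q\geq 1$ and all $k$, while $\ho^0(\mathcal{O}_{\pn}(k)^{\oplus r})=0$ for $k<0$ and $\ho^q(\mathcal{O}_{\pn}(k))=0$ for $1\leq q\leq n-1$, all $k$, and $\ho^n(\mathcal{O}_{\pn}(k))=0$ for $k>-n-1$. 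Feeding these into the long exact sequence gives $\ho^0(\mathcal{E}(-1))=0$, $\ho^n(\mathcal{E}(-n))=0$, and $\ho^q(\mathcal{E}(k))=0$ for $2\leq q\leq n-1$, exactly hypotheses (i) and (ii). For hypothesis (iii), I would twist \eqref{structure} by $\Omega^{-p}_{\pn}(-p-1)$ and take cohomology; using Bott's formula (or the standard vanishing $\ho^q(\Omega^{-p}_{\pn}(-p-1))=0$ for the relevant range, together with $\ho^0(\mathcal{Q}\otimes\Omega^{-p}_{\pn}(-p-1))\neq 0$ because $\mathcal{Q}$ has finite length $c\geq 1$ and a twist by a locally free sheaf of positive rank still has sections) forces $\ho^1(\mathcal{E}\otimes\Omega^{-p}_{\pn}(-p-1))\cong\ho^0(\mathcal{Q}\otimes\Omega^{-p}_{\pn}(-p-1))\neq 0$. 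This simultaneously establishes (iii) and produces the isomorphism $V_i\cong\ho^0(\mathcal{Q}\otimes\Omega^{1-i}_{\pn}(-i))$ claimed in \eqref{eq-lema} (here $p=i-1$, so $-n\le p\le 0$ corresponds to $1-n\le i\le 1$, matching the indexing of the monad terms).

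Having checked the hypotheses, Proposition \ref{Character} immediately yields that $\mathcal{E}$ is the cohomology of a perfect extended monad, and the proof of that proposition identifies the terms: the Beilinson spectral sequence for $\mathcal{E}(-1)$ has $E_1^{p,q}=\ho^q(\mathcal{E}\otimes\Omega^{-p}_{\pn}(-p-1))\otimes\mathcal{O}_{\pn}(p)$, this degenerates at $E_2$ because $E_1^{p,q}=0$ for $q\neq 1$, and the surviving row gives the complex with terms $V_p\otimes\mathcal{O}_{\pn}(p)$, $V_p=\ho^1(\mathcal{E}\otimes\Omega^{-p}_{\pn}(-p-1))$; twisting by $\mathcal{O}_{\pn}(1)$ reindexes this to $P^{-i}=V_i\otimes\mathcal{O}_{\pn}(i)$ with $V_i=\ho^1(\mathcal{E}\otimes\Omega^{1-i}_{\pn}(-i))$, as asserted. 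So the structural statement and \eqref{eq-lema} are essentially bookkeeping once the hypotheses are in hand.

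The remaining work is the explicit computation of the dimensions, i.e. \eqref{id1} and \eqref{id2}. For \eqref{id1}, when $i=1$ we have $\Omega^0_{\pn}=\mathcal{O}_{\pn}$ and the twist is trivial, so $V_1=\ho^1(\mathcal{E})\cong\ho^0(\mathcal{Q})$ directly from \eqref{structure} (using $\ho^0(\mathcal{E})=0$, which follows from $\mathcal{E}$ being a subsheaf of $\mathcal{O}_{\pn}^{\oplus r}$ with torsion quotient — actually one needs $\ho^1(\mathcal{O}_{\pn}^{\oplus r})=0$ and $\ho^0(\mathcal{O}_{\pn}^{\oplus r})\to\ho^0(\mathcal{Q})$ to be... let me instead argue $\ho^1(\mathcal{E})\cong\coker(\ho^0(\mathcal{O}_{\pn}^{\oplus r})\to\ho^0(\mathcal{Q}))$ is not quite $\ho^0(\mathcal{Q})$ in general, so I would use the twisted version $V_1\cong\ho^0(\mathcal{Q})$ which holds because sections of $\mathcal{O}_{\pn}(-1)^{\oplus r}$ and $\ho^1(\mathcal{O}_{\pn}(-1)^{\oplus r})$ both vanish — wait, $V_1=\ho^1(\mathcal{E}\otimes\Omega^0(-1))=\ho^1(\mathcal{E}(-1))$; from \eqref{structure} twisted by $\mathcal{O}_{\pn}(-1)$, since $\ho^0(\mathcal{O}_{\pn}(-1)^{\oplus r})=\ho^1(\mathcal{O}_{\pn}(-1)^{\oplus r})=0$, we get $\ho^1(\mathcal{E}(-1))\cong\ho^0(\mathcal{Q}(-1))=\ho^0(\mathcal{Q})$ since $\mathcal{Q}$ is $0$-dimensional so twisting doesn't change its global sections). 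For \eqref{id2}, the key input is the long exact sequence from the Euler-type sequence $0\to\Omega^{-p}_{\pn}(-p)\to\mathcal{O}_{\pn}^{\oplus\binom{n+1}{-p}}\to\Omega^{-p+1}_{\pn}(-p+1)\to0$ — wait, for $0$-dimensional $\mathcal{Q}$, $\ho^0(\mathcal{Q}\otimes\Omega^{-p}_{\pn}(k))\cong\ho^0(\mathcal{Q})\otimes(\text{fibre of }\Omega^{-p}_{\pn})$ has dimension $c\cdot\binom{n}{-p}$ — more precisely I would use that tensoring a length-$c$ skyscraper-type sheaf by a locally free sheaf of rank $\rho$ gives global sections of dimension $c\rho$, and $\mathrm{rk}\,\Omega^{j}_{\pn}=\binom{n}{j}$. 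Then $\dim V_i=\dim\ho^0(\mathcal{Q}\otimes\Omega^{1-i}_{\pn}(-i))=c\binom{n}{1-i}=\dim V_1\cdot\binom{n}{1-i}$ for $i<0$ where $1-i\le n$; and for $i=0$, $\Omega^1_{\pn}$ has rank $n$ but is not a sum of line bundles, so I would instead use the Euler sequence $0\to\Omega^1_{\pn}\to\mathcal{O}_{\pn}(-1)^{\oplus(n+1)}\to\mathcal{O}_{\pn}\to0$ tensored with $\mathcal{Q}$: since $\mathcal{Q}$ is $0$-dimensional the sequence stays exact on global sections in the relevant spots, giving $\dim\ho^0(\mathcal{Q}\otimes\Omega^1_{\pn})=(n+1)\dim\ho^0(\mathcal{Q})-\dim\ho^0(\mathcal{Q})=n\dim\ho^0(\mathcal{Q})$ — but \eqref{id2} wants $V_1^{\oplus n}\oplus\mathbb{C}^r$, so the extra $\mathbb{C}^r$ comes from being careful about the twist: $V_0=\ho^1(\mathcal{E}\otimes\Omega^1_{\pn})$, and I must track the contribution of $\ho^0(\mathcal{E})$ versus $\ho^1$ boundary terms in the Euler sequence for $\mathcal{E}$ rather than $\mathcal{Q}$; the $\mathbb{C}^r$ should appear as $\ho^0(\mathcal{O}_{\pn}\otimes\text{something})$ or from $\ho^1(\mathcal{O}_{\pn}^{\oplus r}\otimes\Omega^1_{\pn})\cong\mathbb{C}^r$ since $\ho^1(\Omega^1_{\pn})\cong\mathbb{C}$. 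I expect this last bookkeeping — correctly tracking where the $\mathbb{C}^r$ term enters the $i=0$ case, distinguishing $\mathcal{E}$-cohomology from $\mathcal{Q}$-cohomology in the Euler sequence chase — to be the main obstacle; the rest is routine dimension counting once one commits to computing with $\mathcal{Q}$ for the ranks and isolating the $\ho^1(\mathcal{O}_{\pn}^{\oplus r}\otimes\Omega^1_{\pn})$ contribution for the defect.
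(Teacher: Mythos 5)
Your proposal follows essentially the same route as the paper's proof: verify the hypotheses of Proposition \ref{Character} by twisting the defining sequence \eqref{structure} by $\mathcal{O}_{\pn}(k)$ and by $\Omega^{-p}_{\pn}(-p-1)$ using that $\mathcal{Q}$ has zero-dimensional support, read off \eqref{eq-lema} from Bott's formula, and compute the ranks from $\mathcal{Q}$, with the extra $\mathbb{C}^{r}$ at $i=0$ coming from $\ho^{1}(\Omega^{1}_{\pn})^{\oplus r}\cong\mathbb{C}^{r}$ in the sequence $0\to\ho^{0}(\mathcal{Q}\otimes\Omega^{1}_{\pn})\to\ho^{1}(\mathcal{E}\otimes\Omega^{1}_{\pn})\to\ho^{1}(\Omega^{1}_{\pn})^{\oplus r}\to0$. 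The point you flag as the ``main obstacle'' is exactly how the paper resolves it, so your argument is correct and matches the published one.
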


\begin{proof} 
Conditions \emph{(i)} and \emph{(ii)} in Proposition \ref{Character} follow easily from twisting sequence \eqref{structure} by $\mathcal{O}_{\pn}(k)$ and using that fact that $\mathcal{Q}$ is supported in dimension zero. Next, twist sequence \eqref{structure} by $\Omega^{-p}_{\pn}(-p-1)$ and use Bott's formula to obtain the isomorphisms in \eqref{eq-lema}.

The isomorphisms \eqref{id1} and \eqref{id2} can be proved as follows. First, we have 
$ V_1 := \ho^{1}(\mathcal{E}(-1)) \cong \ho^{0}(\mathcal{Q}(-i))\cong \ho^{0}(\mathcal{Q}),$ for $i=1$
since $\mathcal{Q}$ is supported in dimension zero. 

The space $V_{0}$ fits in the sequence
$$ 0 \to \ho^{0}(\mathcal{Q}\otimes\Omega^{1}_{\pn}) \to \ho^{1}(\mathcal{E}\otimes\Omega^{1}_{\pn}) \to \ho^{1}(\Omega^{1}_{\pn})^{\oplus r} \to 0 $$
obtained from sequence \eqref{structure} twisted by $\Omega^{1}_{\pn}$. On the other hand, we know from the Euler sequence that $\ho^{1}(\Omega^{1}_{\pn})\cong\ho^0(\mathcal{O}_{\pn})$.
Moreover, since $ \ho^{0}(\mathcal{Q}\otimes\Omega^{1}_{\pn}) \cong \ho^{0}(\mathcal{Q})^{\oplus n} \cong V_{1}^{\oplus n}, $ it follows that
$\ho^{1}(\mathcal{E}\otimes\Omega^{1-i}_{\pn})\cong V_{1}^{\oplus n}\oplus\mathbb{C}^{r}.$

Finally, note that 
$ V_{i} = \ho^{1}(\mathcal{E}\otimes\Omega^{1-i}_{\pn}(-i)) \cong \ho^{0}(\mathcal{Q}^{\oplus\binom{n}{-i}}) = V_{1}^{\oplus \binom{n}{1-i}},$ for $i<0.$
\end{proof}

In particular, for the case $r=1$, we have the following Corollary. 

\begin{corollary}\label{perfect-ideal}
For every zero dimensional subscheme $Z\subset\pn$, there exists a perfect extended monad $P^\bullet$ of the form
\begin{equation*}
{\small \xymatrix@C-1.2pc{
0\ar[r]&V_{1-n}\otimes\mathcal{O}_{\pn}(1-n)\ar[r]^{\alpha_{1-n}} & V_{2-n}\otimes\mathcal{O}_{\pn}(2-n) &\hdots\ar[r]^{\alpha_{-1}\hspace{0.5cm}}& V_{0}\otimes\mathcal{O}_{\pn}\ar[r]^{\alpha_{0}\hspace{0.2cm}} & V_{1}\otimes\mathcal{O}_{\pn}(1)\ar[r]&0
}}
\end{equation*}
where $V_1:=\ho^0(\mathcal{O}_Z)$ and 
$$ V_{i} \cong \left\{ \begin{array}{ll} 
V_{1}^{\oplus n}\oplus\mathbb{C} & \textnormal{for } i=0 \\
V_{1}^{\oplus \binom{n}{1-i}}  &  \textnormal{for } i<0
\end{array}\right. , $$
whose cohomology is the ideal sheaf $\mathcal{I}_Z$.
\end{corollary}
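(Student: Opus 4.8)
The plan is to derive Corollary~\ref{perfect-ideal} as the special case $r=1$ of Proposition~\ref{vanishing}, so essentially all the work is already done; the only genuine task is to identify the relevant vector spaces in that case and to recognize the cohomology sheaf as $\mathcal{I}_Z$. First I would recall that for $r=1$ the exact sequence \eqref{structure} becomes
\begin{equation*}
0 \to \mathcal{E} \to \mathcal{O}_{\pn} \to \mathcal{O}_Z \to 0,
\end{equation*}
so that $\mathcal{Q}=\mathcal{O}_Z$ is a pure torsion sheaf of length $c=\ho^0(\mathcal{O}_Z)$ supported on the zero-dimensional subscheme $Z$, and $\mathcal{E}=\mathcal{I}_Z$ by definition. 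Hence Proposition~\ref{vanishing} applies verbatim and produces a perfect extended monad $P^\bullet$ with terms $P^{-i}=V_i\otimes\mathcal{O}_{\pn}(i)$ for $i=1-n,\dots,1$ whose cohomology is $\mathcal{E}=\mathcal{I}_Z$.

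Next I would specialize the identifications \eqref{id1} and \eqref{id2} to $r=1$. Setting $r=1$ in \eqref{id1} gives $V_1\cong\ho^0(\mathcal{Q})=\ho^0(\mathcal{O}_Z)$, which is exactly the definition of $V_1$ claimed in the corollary. Setting $r=1$ in \eqref{id2} gives $V_0\cong V_1^{\oplus n}\oplus\mathbb{C}$ and $V_{-i}\cong V_1^{\oplus\binom{n}{1-i}}$ for $i<0$, matching the two cases in the displayed formula of the corollary. The dimension count then follows from the corresponding statement in Proposition~\ref{vanishing} with $r=1$, namely $\dim V_1=c$, $\dim V_0=nc+1$, and $\dim V_{-i}=\binom{n}{1-i}c$, although this is not strictly needed for the statement of the corollary itself.

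Since this is a direct corollary, there is no substantive obstacle; the only thing to be careful about is bookkeeping with the twisting indices and the binomial coefficients, ensuring that the complex written with terms $\mathcal{O}_{\pn}(1-n),\mathcal{O}_{\pn}(2-n),\dots,\mathcal{O}_{\pn},\mathcal{O}_{\pn}(1)$ corresponds correctly to the indexing $P^{-i}=V_i\otimes\mathcal{O}_{\pn}(i)$ used in Proposition~\ref{vanishing}. Thus the proof is a one-line invocation: \emph{apply Proposition~\ref{vanishing} with $r=1$, noting that then $\mathcal{Q}=\mathcal{O}_Z$ and $\mathcal{E}=\mathcal{I}_Z$, and read off the isomorphisms from \eqref{id1} and \eqref{id2}.}
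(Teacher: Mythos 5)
Your proposal is correct and coincides with the paper's treatment: the paper states this corollary with no separate proof, presenting it as the immediate specialization of Proposition~\ref{vanishing} to $r=1$, where sequence \eqref{structure} becomes $0\to\mathcal{I}_Z\to\mathcal{O}_{\pn}\to\mathcal{O}_Z\to0$ and the identifications \eqref{id1}, \eqref{id2} give the stated terms. Your bookkeeping of the indices and the identification $\mathcal{Q}=\mathcal{O}_Z$, $\mathcal{E}=\mathcal{I}_Z$ is exactly what is needed.
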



\subsection{The $\p3$ case}\label{reduction}

Now, we fix a hyperplane $\wp\subset\p3$. We shall describe how to get linear algebraic data out of the perfect extended monad corresponding to a $0$-dimensional
subscheme $Z\subset\p3\setminus\wp$, as in Corollary \ref{perfect-ideal}.

Let us start by fixing notation; we choose homogeneous coordinates $[z_{0};z_{1};z_{2};z_{3}]$ on $\p3$
in such a way that the hyperplane $\wp$ is given by the equation $z_{3}=0$. We also regard such coordinates as a basis for the space of global sections $\ho^{0}(\mathcal{O}_{\p3}(1))$. 

By Corollary \ref{perfect-ideal}, there is a perfect extended monad $P^{\bullet}$ with cohomology equal to the ideal sheaf $\mathcal{I}_{Z}$. It is given by
\begin{equation}\label{cpx-p3}
{\small \xymatrix@C-1pc{
0\ar[r]&V_{1}\otimes\mathcal{O}_{\p3}(-2)\ar[r]^{\alpha_{-2}} & V_{1}^{\oplus3}\otimes\mathcal{O}_{\p3}(-1)\ar[r]^{\alpha_{-1}\hspace{0.5cm}}& (V_{1}^{\oplus3}\oplus W)\otimes\mathcal{O}_{\p3}\ar[r]^{\alpha_{0}\hspace{0.2cm}} & V_{1}\otimes\mathcal{O}_{\p3}(1)\ar[r]&0
}}
\end{equation}

\noindent where
$\alpha_{-2}\in\Hom(V_{1},V_{1}^{\oplus3})\otimes\ho^{0}(\mathcal{O}_{\p3}(1)),$
$\alpha_{-1}\in\Hom(V_{1}^{\oplus3},V_{1}^{\oplus3}\oplus
W)\otimes\ho^{0}(\mathcal{O}_{\p3}(1))$ and
$\alpha_{0}\in\Hom(V_{1}^{\oplus3}\oplus
W,V_{1})\otimes\ho^{0}(\mathcal{O}_{\p3}(1)).$

 we can write
the $\alpha$'s as:
$$
\alpha_{-2}=\alpha_{-2}^{0}z_{0}+\alpha_{-2}^{1}z_{1}+\alpha_{-2}^{2}z_{2}+\alpha_{-2}^{3}z_{3};
$$
$$
\alpha_{-1}=\alpha_{-1}^{0}z_{0}+\alpha_{-1}^{1}z_{1}+\alpha_{-1}^{2}z_{2}+\alpha_{-1}^{3}z_{3};
$$
$$
\alpha_{0}=\alpha_{0}^{0}z_{0}+\alpha_{0}^{1}z_{1}+\alpha_{0}^{2}z_{2}+\alpha_{0}^{3}z_{3},
$$

The conditions $\alpha_{-1}\circ\alpha_{-2}=0$ and $\alpha_{0}\circ\alpha_{-1}=0$, which guarantee that (\ref{cpx-p3}) is a complex, are equivalent to
\begin{equation}\label{complex condition}
\alpha_{1-i}^{k}\circ\alpha_{-i}^{k}=0\quad\forall k,i\qquad\textnormal{ and } \qquad\alpha_{1-i}^{k}\circ\alpha_{-i}^{l}+\alpha_{1-i}^{l}\circ\alpha_{-i}^{k}=0 \quad \forall i, k\neq l.
\end{equation}
We also have to impose the condition $\ker\alpha_{-1}=\im\alpha_{-2},$ since $\mathcal{H}^{-1}(P^{\bullet})=0.$  

Restricting $P^{\bullet}$ to the plane $\wp\simeq\p2$ we get the following $1$-extended monad on $\wp$:
\begin{equation}\label{Restricted-PEM}
{\small \xymatrix@C-1pc{
V_{1}\otimes\mathcal{O}|_{\wp}(-2)\ar[r]^{\alpha_{-2}|_{\wp}} & V_{1}^{\oplus3}\otimes\mathcal{O}|_{\wp}(-1)\ar[r]^{\alpha_{-1}|_{\wp}\hspace{0.2cm}}& (V_{1}^{\oplus3}\oplus W)\otimes\mathcal{O}|_{\wp}\ar[r]^{\alpha_{0}|_{\wp}} & V_{1}\otimes\mathcal{O}|_{\wp}(1)
}}
\end{equation}
and the maps of this complex are just given by
$$ \alpha_{-2}|_{\wp}=\alpha_{-2}^{0}z_{0}+\alpha_{-2}^{1}z_{1}+\alpha_{-2}^{2}z_{2}; $$
$$ \alpha_{-1}|_{\wp}=\alpha_{-1}^{0}z_{0}+\alpha_{-1}^{1}z_{1}+\alpha_{-1}^{2}z_{2}; $$
$$ \alpha_{0}|_{\wp}=\alpha_{0}^{0}z_{0}+\alpha_{0}^{1}z_{1}+\alpha_{0}^{2}z_{2}. $$


The resolution and the monad associated to the perfect extended monad $P^{\bullet}$ are given by, respectively,
\begin{equation}\label{ass-reso}
{\small \xymatrix@C-0.8pc{
0\ar[r]&V_{1}\otimes\mathcal{O}|_{\wp}(-2)\ar[r]^{\alpha_{-2}|_{\wp}} & V_{1}^{\oplus3}\otimes\mathcal{O}|_{\wp}(-1)\ar[r]^{\hspace{1cm}J_{-1}|_{\wp}}&\mathcal{G}|_{\wp}\ar[r]&0
}}
\end{equation}
\begin{equation}\label{ass-monad}
{\small \xymatrix@C-0.8pc{
\mathcal{G}|_{\wp}\ar[r]^{I_{-1}|_{\wp}\hspace{1cm}}& (V_{1}^{\oplus3}\oplus W)\otimes\mathcal{O}|_{\wp}\ar[r]^{\hspace{0.5cm}\alpha_{0}|_{\wp}} & V_{1}\otimes\mathcal{O}|_{\wp}(1)
} } ~.
\end{equation}


\begin{lemma}
The sheaf $\mathcal{G}|_{\wp}$ is locally free and satisfies
\begin{itemize}
\item[(i)] $\ho^{0}(\wp,\mathcal{G}|_{\wp})=\ho^{1}(\wp,\mathcal{G}|_{\wp})=\ho^{2}(\wp,\mathcal{G}|_{\wp})=0;$
\item[(ii)] $\ho^{1}(\wp,\mathcal{G}|_{\wp}^{\ast})=\ho^{2}(\wp,\mathcal{G}|_{\wp}^{\ast})=0$, and $h^{0}(\wp,\mathcal{G}|_{\wp}^{\ast})=3c.$
\end{itemize}
\end{lemma}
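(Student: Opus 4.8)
The plan is to reduce the statement to a few short exact sequences on $\wp\simeq\p2$ — those coming from the associated resolution \eqref{ass-reso} and the associated monad \eqref{ass-monad} — and then to feed them into the vanishing of line bundle cohomology on $\p2$.

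The first task is to check that restriction to $\wp$ does no harm, i.e. that it commutes with the formation of $\mathcal{G}$, of $Q$, and of the cohomology of the monad. Every coherent sheaf occurring in the display \eqref{ext-display} of the perfect extended monad \eqref{cpx-p3} is torsion free on $\p3$: the free terms and $K=\ker\alpha_0$ are (sub)sheaves of locally free sheaves; $\mathcal{G}=\coker\alpha_{-2}$ is isomorphic to $\im\alpha_{-1}\subset C^0$, since $\mathcal{H}^{-1}(P^\bullet)=0$ means $\ker\alpha_{-1}=\im\alpha_{-2}$; and $Q=\coker\alpha_{-1}$ sits in the extension $0\to\mathcal{I}_Z\to Q\to V_1\otimes\mathcal{O}_{\p3}(1)\to0$ of torsion free sheaves. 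Because multiplication by $z_3$ is injective on a torsion free sheaf and $\mathcal{O}_\wp$ has the locally free resolution $0\to\mathcal{O}_{\p3}(-1)\xrightarrow{\,\cdot z_3\,}\mathcal{O}_{\p3}\to\mathcal{O}_\wp\to0$, tensoring each short exact sequence of \eqref{ext-display} by $\mathcal{O}_\wp$ keeps it exact; restricting $0\to\mathcal{I}_Z\to\mathcal{O}_{\p3}\to\mathcal{O}_Z\to0$ to $\wp$ and using $Z\cap\wp=\varnothing$ gives in addition $\mathcal{I}_Z|_\wp\cong\mathcal{O}_\wp$. Thus on $\wp$ we recover the exact sequence \eqref{ass-reso}, $0\to V_1\otimes\mathcal{O}_\wp(-2)\to V_1^{\oplus3}\otimes\mathcal{O}_\wp(-1)\to\mathcal{G}|_\wp\to0$, together with the two short exact sequences
\[ 0\to\mathcal{G}|_\wp\to(V_1^{\oplus3}\oplus W)\otimes\mathcal{O}_\wp\to Q|_\wp\to0 \qquad\text{and}\qquad 0\to\mathcal{O}_\wp\to Q|_\wp\to V_1\otimes\mathcal{O}_\wp(1)\to0 \]
read off from the associated monad \eqref{ass-monad}.

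Local freeness of $\mathcal{G}|_\wp$ now follows: the second of these sequences realizes $Q|_\wp$ as an extension of the locally free sheaf $V_1\otimes\mathcal{O}_\wp(1)$ by the locally free sheaf $\mathcal{O}_\wp$, so $Q|_\wp$ is locally free; the first then realizes $\mathcal{G}|_\wp$ as the kernel of a surjection between locally free sheaves on the smooth surface $\wp$, and tensoring with a residue field $k(x)$ remains exact (as $Q|_\wp$ is locally free), so $\dim_{k(x)}\mathcal{G}|_\wp\otimes k(x)$ is constant and $\mathcal{G}|_\wp$ is locally free. For item (i) I would then run the long exact cohomology sequence of \eqref{ass-reso}, using $\ho^0(\mathcal{O}_{\p2}(-1))=\ho^1(\mathcal{O}_{\p2}(-1))=\ho^1(\mathcal{O}_{\p2}(-2))=\ho^2(\mathcal{O}_{\p2}(-1))=\ho^2(\mathcal{O}_{\p2}(-2))=0$ to kill $\ho^0$, $\ho^1$ and $\ho^2$ of $\mathcal{G}|_\wp$ one after another.

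For item (ii), with $\mathcal{G}|_\wp$ and $Q|_\wp$ now locally free I would dualize the two monad sequences to obtain $0\to V_1^{\ast}\otimes\mathcal{O}_\wp(-1)\to Q|_\wp^{\ast}\to\mathcal{O}_\wp\to0$ and $0\to Q|_\wp^{\ast}\to\mathcal{O}_\wp^{\oplus(3c+1)}\to\mathcal{G}|_\wp^{\ast}\to0$. From the first, $\ho^1(\mathcal{O}_{\p2}(-1))=\ho^2(\mathcal{O}_{\p2}(-1))=\ho^1(\mathcal{O}_{\p2})=\ho^2(\mathcal{O}_{\p2})=0$ give $\ho^1(Q|_\wp^{\ast})=\ho^2(Q|_\wp^{\ast})=0$, and since also $\ho^0(\mathcal{O}_{\p2}(-1))=0$ one gets $h^0(Q|_\wp^{\ast})=h^0(\mathcal{O}_\wp)=1$. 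The long exact sequence of the second, together with $\ho^1(\mathcal{O}_{\p2})=\ho^2(\mathcal{O}_{\p2})=0$, then yields $\ho^1(\mathcal{G}|_\wp^{\ast})\cong\ho^2(Q|_\wp^{\ast})=0$, $\ho^2(\mathcal{G}|_\wp^{\ast})=0$, and $h^0(\mathcal{G}|_\wp^{\ast})=(3c+1)-h^0(Q|_\wp^{\ast})=3c$, exactly the asserted values. The only step requiring genuine care is the bookkeeping in the second paragraph — making sure each short exact sequence survives restriction to $\wp$ and that $\mathcal{I}_Z|_\wp$ is trivial because $Z$ avoids $\wp$; everything after that is a mechanical application of Bott vanishing on $\p2$.
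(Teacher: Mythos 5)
Your proof is correct and follows the same overall strategy as the paper's: restrict the display of the perfect extended monad to $\wp$, deduce local freeness of $Q|_{\wp}$ and then of $\mathcal{G}|_{\wp}$ from the two resulting short exact sequences, and get item (i) from the long exact cohomology sequence of \eqref{ass-reso}. Two small differences are worth noting. First, your opening paragraph supplies the torsion-freeness and $\tor$-vanishing argument needed to justify that restriction to $\wp$ preserves exactness of the display (and that $\mathcal{I}_Z|_{\wp}\cong\mathcal{O}_{\wp}$); the paper takes this step for granted. Second, for item (ii) you work only with the two dualized monad sequences, computing $h^0(Q|_{\wp}^{\ast})=1$ and $\ho^{1}(Q|_{\wp}^{\ast})=\ho^{2}(Q|_{\wp}^{\ast})=0$ and then reading off all of $\ho^{\bullet}(\mathcal{G}|_{\wp}^{\ast})$ from $0\to Q|_{\wp}^{\ast}\to\mathcal{O}_{\wp}^{\oplus(3c+1)}\to\mathcal{G}|_{\wp}^{\ast}\to0$, whereas the paper additionally dualizes the resolution \eqref{ass-reso} to obtain the four-term sequence \eqref{dim-count}, uses the splitting of $Q|_{\wp}$ (via $\ext^1(V_1\otimes\mathcal{O}|_{\wp}(1),\mathcal{O}|_{\wp})=0$) to get $h^{0}(\mathcal{G}|_{\wp}^{\ast})=3c$, and then deduces $h^{1}(\mathcal{G}|_{\wp}^{\ast})=0$ by a dimension count in \eqref{dim-count}. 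Your route is a little more economical and avoids the splitting argument; both are valid.
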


\begin{proof}
Taking the restriction of the display of the perfect monad to the plane $\wp$ one has $\mathcal{I}|_{\wp}=\mathcal{O}|_{\wp},$ since $supp(Z)\cap\wp=\emptyset.$ Moreover, from the lowest row of the restricted display, namely
$$ 0\to\mathcal{O}|_{\wp}\to Q|{\wp}\to V_{1}\otimes\mathcal{O}|_{\wp}\to0, $$
it follows that $Q|_{\wp}$ is a locally free sheaf. Furthermore, from the middle column of the restricted display, namely
$$ 0\to\mathcal{G}|_{\wp}\to(V_{1}^{\oplus3}\oplus W)\otimes\mathcal{O}|_{\wp}\to Q|_{\wp}\to0, $$
it also follows the sheaf $\mathcal{G}|_{\wp}$ is locally free.

The first item follows from the long exact sequence in cohomology of the associated resolution \eqref{ass-reso} and the fact that $\ho^{i}(\wp,\mathcal{O}|_{\wp}(k))=0,$ for $i=0,1,2$ and $k=-1,-2.$

For the second item, dualize the exact sequence \eqref{ass-reso} and apply the global sections functor $\Gamma$ to obtain the exact sequence
{\small
\begin{equation}\label{dim-count}
0\to\ho^{0}(\wp,\mathcal{G}|_{\wp}^{\ast})\to (V_{1}^{\ast})^{\oplus3}\otimes\ho^{0}(\wp,\mathcal{O}|_{\wp}(1))\to V_{1}^{\ast}\otimes\ho^{0}(\wp,\mathcal{O}|_{\wp}(2))\to\ho^{1}(\wp,\mathcal{G}|_{\wp}^{\ast})\to0.
\end{equation}}
and $\ho^{2}(\wp,\mathcal{G}|_{\wp}^{\ast})=0,$ since 
$\ho^{1,2}(\wp,\mathcal{O}|_{\wp}(1))=\ho^{1,2}(\wp,\mathcal{O}|_{\wp}(1))=0$.
On the other hand, from the dual display of the associated monad \eqref{ass-monad} one has the exact sequence
\begin{equation}\label{dual-mid-col}
0\to Q|_{\wp}^{\ast}\to(V_{1}^{\ast}\oplus W^{\ast})\otimes\mathcal{O}|_{\wp}\to\mathcal{G}|_{\wp}^{\ast}\to0
\end{equation}
where $Q:=\coker\alpha_{-1}.$ Moreover $ Q|_{\wp}^{\ast}=V_{1}^{\ast}\otimes\mathcal{O}|_{\wp}(1)\oplus\mathcal{O}|_{\wp}$ since $ Q|_{\wp}\in\ext^{1}(V_{1}\otimes\mathcal{O}|_{\wp}(1),\mathcal{O}|_{\wp})=V_{1}^{\ast}\otimes\ho^{1}(\wp,\mathcal{O}|_{\wp}(-1))=0.$ Then, from the long exact sequence in cohomology associated to \eqref{dual-mid-col}, it follows that $\ho^{0}(\wp,\mathcal{G}|_{\wp}^{\ast})$ fits in the exact sequence
\begin{equation}\label{fitting1}
0\to\mathbb{C}\to (V_{1}^{\ast})^{\oplus3}\oplus W\to\ho^{0}(\wp,\mathcal{G}|_{\wp}^{\ast})\to0,
\end{equation}
hence $h^{0}(\wp,\mathcal{G}|_{\wp}^{\ast})=3c$ and from \eqref{dim-count} it follows that $h^{1}(\wp,\mathcal{G}|_{\wp}^{\ast})=0.$
\end{proof}

Remark that the sequence \eqref{dim-count} becomes just
\begin{equation}\label{dual}
0\to\ho^{0}(\wp,\mathcal{G}|_{\wp}^{\ast})\stackrel{i}{\to}(V_{1}^{\ast})^{\oplus3}\oplus(V_{1}^{\ast})^{\oplus3}\oplus(V_{1}^{\ast})^{\oplus3}\stackrel{j}{\to} (V_{1}^{\ast})^{\oplus3}\oplus(V_{1}^{\ast})^{\oplus3}\to0,
\end{equation}
since $\ho^{0}(\wp,\mathcal{O}|_{\wp}(1))\simeq\mathbb{C}^{3},$ and
$\ho^{0}(\wp,\mathcal{O}|_{\wp}(2))\simeq\mathbb{C}^{6}.$ So one can
identify $\ho^{0}(\wp,\mathcal{G}|_{\wp}^{\ast})$ with
$(V_{1}^{\ast})^{\oplus3}.$ Furthermore, by \eqref{fitting1}, one
can identify $W=\ho^{0}(\wp,\mathcal{I}_{Z}|_{\wp})\cong\mathbb{C}$, since $Z\cap\wp=\emptyset$.

Combining sequences \eqref{dual} and \eqref{fitting1}, and dualizing the resulting sequence one gets
\begin{equation}\label{fitting2}
0\to V_{1}^{\oplus3}\oplus V_{1}^{\oplus3}\stackrel{i}{\to}V_{1}^{\oplus3}\oplus V_{1}^{\oplus3}\oplus V_{1}^{\oplus3}\stackrel{j}{\to}V_{1}^{\oplus3}\oplus W\to\mathbb{C}\to0,
\end{equation}
The maps $i$ and $j$ are just $\ho^{0}(\alpha_{-2})$ and $\ho^{0}(\alpha_{-1}),$ respectively. Thus we have
$$
\ker\ho^{0}(\alpha_{-2})=\ker\alpha_{-2}^{0}\cap\ker\alpha_{-2}^{1}\cap\ker\alpha_{-2}^{2}=\{0\},
$$
and
$$
\textnormal{ker}\ho^{0}(^{t}\alpha_{-1})=\ker^{\quad t}\alpha_{-1}^{0}\cap\ker^{\quad t}\alpha_{-1}^{1}\cap\ker^{\quad t}\alpha_{-1}^{2}=\mathbb{C}.
$$
The subscript $t,$ in the last equation, stands for transposition.
Remark also that the sequence \eqref{fitting2} reflects the fact the complex \eqref{Restricted-PEM} is exact at degree $-1,$ i.e., $\alpha_{-1}\circ\alpha_{-2}=0.$

We can then choose the maps $\alpha_{-1}^{j}$ in the following way. First,
$$\alpha_{-2}^{0},\hspace{0.3cm}\alpha_{-2}^{1},\hspace{0.3cm}\alpha_{-2}^{2}:V_{1}\to V_{1}\oplus V_{1}\oplus V_{1}, $$
with:
\begin{equation}
\alpha_{-2}^{0}=\left(\begin{array}{l} \hspace{0.1cm}0\\\hspace{0.1cm}0\\ \mathbb{I}_{V_{1}}\end{array}\right) \quad 
\alpha_{-2}^{1}=\left(\begin{array}{l}\hspace{0.3cm}0 \\ -\mathbb{I}_{V_{1}} \\ \hspace{0.3cm}0\end{array}\right) \quad
\alpha_{-2}^{2}=\left(\begin{array}{l} \mathbb{I}_{V_{1}} \\ \hspace{0.1cm}0 \\ \hspace{0.1cm}0
\end{array}\right) ,
\end{equation}
and where $\mathbb{I}_{V_{1}}$ denotes the identity in $\End(V_{1})$.

One also has
$$\alpha_{-1}^{0},\hspace{0.3cm}\alpha_{-1}^{1},\hspace{0.3cm}\alpha_{-1}^{2}:V_{1}\oplus V_{1}\oplus V_{1}\to V_{1}\oplus V_{1}\oplus V_{1}\oplus\mathbb{C} $$
given by
\begin{equation} \begin{split}
& \alpha_{-1}^{0} = \left(\begin{array}{lll}
\hspace{0.1cm}0 & \hspace{0.1cm}0&0\\ \mathbb{I}_{V_{1}}&\hspace{0.1cm}0&0\\ \hspace{0.1cm}0&\mathbb{I}_{V_{1}}&0 \\ \hspace{0.1cm}0&\hspace{0.1cm}0&0
\end{array}\right)  \quad\quad
\alpha_{-1}^{1} = \left(\begin{array}{lll}
-\mathbb{I}_{V_{1}} & 0&\hspace{0.1cm}0\\ \hspace{0.3cm}0&0&\hspace{0.1cm}0\\ \hspace{0.3cm}0&0& \mathbb{I}_{V_{1}}\\ \hspace{0.3cm}0&0&\hspace{0.1cm}0
\end{array}\right) \\
&\hspace{2cm} \alpha_{-1}^{2} = \left(\begin{array}{lll}
0 & -\mathbb{I}_{V_{1}}&\hspace{0.3cm}0\\ 0&\hspace{0.3cm}0&-\mathbb{I}_{V_{1}}\\ 0&\hspace{0.3cm}0&\hspace{0.3cm}0 \\ 0&\hspace{0.3cm}0&\hspace{0.3cm}0
\end{array}\right).
\end{split} \end{equation}

Finally, for
$$ \alpha_{0}^{0},\hspace{0.3cm}\alpha_{0}^{1},\hspace{0.3cm}\alpha_{0}^{2}:
V_{1}\oplus V_{1}\oplus V_{1}\oplus\mathbb{C}\to V_{1} $$
one has 
\begin{equation} \begin{split}
& \alpha_{0}^{0} = \left(\begin{array}{llll} -\mathbb{I}_{V_{1}}&0 & 0&0\end{array}\right)
\quad\alpha_{0}^{1}=\left(\begin{array}{llll} 0&-\mathbb{I}_{V_{1}} & 0&0\end{array}\right) \\
&\hspace{2cm}\alpha_{0}^{2}=\left(\begin{array}{llll} 0&0 & -\mathbb{I}_{V_{1}}&0\end{array}\right).
\end{split} \end{equation}

\bigskip

Now, to complete our construction, we have to add the maps $\alpha_{-2}^{3},$ $\alpha_{-1}^{3}$ and $\alpha_{0}^{3}.$ such that conditions \eqref{complex condition} are satisfied. By putting
\begin{equation}\label{alpha3a}
\alpha_{-2}^{3}=\left(\begin{array}{l} -B_{2} \\ \hspace{0.3cm}B_{1}\\ -B_{0}\end{array}\right);
\quad\alpha_{-1}^{3}=\left(\begin{array}{lll} \hspace{0.3cm}B_{1} &\hspace{0.3cm}B_{2} &\hspace{0.3cm}0\\ -B_{0}&\hspace{0.4cm}0&\hspace{0.3cm} B_{2}\\ \hspace{0.4cm}0&-B_{0}&-B_{1} \\ \hspace{0.4cm}0&\hspace{0.4cm}0&\hspace{0.4cm}0\end{array}\right);
\alpha_{0}^{3}=\left(\begin{array}{llll} B_{0}&B_{1} & B_{2}&I\end{array}\right),
\end{equation}
where $B_{i}\in\End(V_{1})$ and $I\in\Hom(\mathbb{C},V_{1}),$ then all the equations are satisfied, since $\alpha_{-1}^{3}\circ\alpha_{-2}^{3}=0$ and $\alpha_{0}^{3}\circ\alpha_{-1}^{3}=0$ are equivalent to
\begin{equation}\label{commutation}
[B_{0},B_{1}]=0;\quad[B_{0},B_{2}]=0;\quad[B_{1},B_{2}]=0,
\end{equation}

Summing up what we have done so far, for a given $0$-dimensional subscheme $Z\subset\p3\setminus\wp$ we have constructed a perfect extended monad $P^\bullet$ of the form

\begin{equation}\label{perfect-monad -ideal}
{\small \xymatrix@C-1pc{
V_{1}\otimes\mathcal{O}_{\p3}(-2)\ar[r]^{\alpha_{-2}} & V_{1}^{\oplus3}\otimes\mathcal{O}_{\p3}(-1)\ar[r]^{\alpha_{-1}\hspace{0.5cm}}& (V_{1}^{\oplus3}\oplus W)\otimes\mathcal{O}_{\p3}\ar[r]^{\alpha_{0}\hspace{0.2cm}} & V_{1}\otimes\mathcal{O}_{\p3}(1)
}}
\end{equation}

\noindent where the maps  maps $\alpha_{-2},$ $\alpha_{-1}$ and $\alpha_{0}$ are given by
\footnote{ We omit writing the identity in front of the coordinates so $z_{i}\mathbb{I}_{V_{1}}$ will just be written $z_{i}$}:
\begin{equation}\label{alpha3}
\begin{split}
\alpha_{-2}=\left(\begin{array}{l} -B_{2}z_{3}+z_{2} \\\hspace{0.3cm} B_{1}z_{3}-z_{1}\\ -B_{0}z_{3}+z_{0}\end{array}\right);&
\quad\alpha_{-1}=\left(\begin{array}{lll} \hspace{0.3cm}B_{1}z_{3}-z_{1} & \hspace{0.3cm}B_{2}z_{3}-z_{2} &\hspace{0.8cm}0\\ -B_{0}z_{3}+z_{0}&\hspace{0.8cm}0& \hspace{0.3cm}B_{2}z_{3}-z_{2}\\ \hspace{0.8cm}0&-B_{0}z_{3}+z_{0}&-B_{1}z_{3}+z_{1} \\ \hspace{0.8cm}0&\hspace{0.8cm}0&\hspace{0.8cm}0\end{array}\right); \\ 
&\alpha_{0}=\left(\begin{array}{llll} B_{0}z_{3}-z_{0}&B_{1}z_{3}-z_{1} & B_{2}z_{3}-z_{2}&Iz_{3}\end{array}\right).
\end{split}
\end{equation}
such that
\begin{equation}
[B_{0},B_{1}]=0;\quad[B_{0},B_{2}]=0;\quad[B_{1},B_{2}]=0,
\end{equation}
Moreover the ADHM datum
$ (B_{0},B_{1},B_{2},I)\in\End(V_{1})^{\oplus3}\oplus\Hom(\mathbb{C},V_{1}) $ is indeed stable. Such claim will follow from the following observation.

\begin{lemma}
The map $\alpha_{0}$ given above is surjective if and only if the ADHM datum $(B_{0},B_{1},B_{2},I)$ is stable.
\end{lemma}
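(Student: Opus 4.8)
The plan is to test surjectivity of $\alpha_0$ fibrewise: a morphism between locally free sheaves on $\p3$ is surjective exactly when it is surjective at every closed point (its cokernel is coherent, hence by Nakayama it vanishes iff all its fibres do). Writing $z=(z_0,z_1,z_2)$, I would first dispose of the hyperplane $\wp=\{z_3=0\}$. At a point $[z_0:z_1:z_2:0]$ the formula \eqref{alpha3} gives the fibre map $(v_0,v_1,v_2,w)\mapsto-(z_0v_0+z_1v_1+z_2v_2)$, which is onto $V_1$ because $(z_0,z_1,z_2)\neq 0$. So surjectivity is only in question at points with $z_3\neq0$, and there, after normalising $z_3=1$, the fibre map reads
$$ \alpha_0|_{[z:1]}=(B_0-z_0,\,B_1-z_1,\,B_2-z_2,\,I):\ V_1^{\oplus3}\oplus W\longrightarrow V_1, $$
with image $U_z:=\sum_{i=0}^{2}(B_i-z_i)V_1+I(W)$.

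The crux is the observation that, for every $z\in\C^3$, the subspace $U_z\subseteq V_1$ is invariant under each $B_j$ and contains $I(W)$: invariance under $B_j$ of each summand $(B_i-z_i)V_1$ follows from $[B_i,B_j]=0$, while $B_jI(w)=(B_j-z_j)I(w)+z_jI(w)\in(B_j-z_j)V_1+I(W)\subseteq U_z$. Granting this, the forward direction is immediate: if $(B_0,B_1,B_2,I)$ is stable, then by Definition \ref{stability} no proper subspace with these two properties exists, so $U_z=V_1$ for every $z$; combined with the computation on $\wp$ this shows $\alpha_0$ is surjective at every point of $\p3$.

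For the converse I would argue by contraposition. Assume $(B_0,B_1,B_2,I)$ is not stable, and pick a proper subspace $S\subsetneq V_1$ with $B_j(S)\subseteq S$ for all $j$ and $I(W)\subseteq S$. Then the annihilator $S^{\perp}\subseteq V_1^{\vee}$ is nonzero and invariant under the transposes ${}^{t}B_j$, which still commute; since $\C$ is algebraically closed, ${}^{t}B_0,{}^{t}B_1,{}^{t}B_2$ have a common eigenvector $0\neq v^{*}\in S^{\perp}$, say ${}^{t}B_jv^{*}=z_jv^{*}$. Then $v^{*}$ annihilates $(B_j-z_j)V_1$ for each $j$ and annihilates $I(W)$ (as $I(W)\subseteq S$), hence $U_z\subseteq\ker v^{*}\subsetneq V_1$, so $\alpha_0$ fails to be surjective at $[z_0:z_1:z_2:1]$. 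Thus surjectivity of $\alpha_0$ forces stability, which closes the equivalence.

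I do not anticipate a genuine obstacle here; the proof is short once the right bookkeeping is in place. The only points requiring a word of care are the reduction to fibrewise surjectivity, remembering to dispatch the hyperplane $\wp$ separately (where $\alpha_0$ is automatically surjective), and the elementary but essential fact that a family of commuting operators on a nonzero finite-dimensional complex vector space possesses a common eigenvector — it is exactly this that produces the point of $\p3$ witnessing non-surjectivity when the datum is unstable.
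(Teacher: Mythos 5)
Your proof is correct, and its overall shape --- reduce to surjectivity on fibres, then translate fibrewise (non-)surjectivity into the (in)stability condition --- is the same as the paper's. The difference is in how the two implications are organized. For ``unstable $\Rightarrow$ not surjective'' you and the paper do the identical thing: pass to the annihilator $S^{\perp}$ of a destabilizing subspace, use that the transposed operators commute and preserve it to extract a common eigenvector, and read off a point $[\lambda_0:\lambda_1:\lambda_2:1]$ where the dual of $\alpha_0$ has kernel. For ``stable $\Rightarrow$ surjective'' the paper again argues through the dual: from a covector killed by the fibre of $\alpha_0^{\ast}$ it builds a $B_i^{\ast}$-invariant subspace of $V^{\ast}$ and takes its annihilator to produce a destabilizing $S$. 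You instead argue in the primal: the image $U_z=\sum_i(B_i-z_i)V_1+I(W)$ of the fibre map is itself $B_j$-invariant and contains $I(W)$, so stability forces $U_z=V_1$ directly. This is the same idea seen from the other side of the duality, but it is slightly cleaner, and it lets you avoid constructing the auxiliary subspace $\bar S$. You also explicitly check the fibres over the hyperplane $\wp$ (where $\alpha_0$ is automatically surjective because $(z_0,z_1,z_2)\neq 0$); the paper leaves this case implicit, since at such points the dual fibre map has trivial kernel. No gaps.
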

\begin{proof}
The proof is a straightforward generalization of \cite[Lemma 2.7 (2)]{N2}.
\end{proof}


\begin{theorem}[Inverse construction]\label{reconstruction}
To a stable ADHM datum \linebreak $X=(B_{0},B_{1},B_{2},I)\in\mathcal{V}(3,c)^{st}$ one can associate the perfect extended monad \eqref{perfect-monad -ideal} with maps $\alpha_{-2},\alpha_{-1},\alpha_{0}$ given as in \eqref{alpha3}, such that its cohomology is an ideal sheaf whose restriction to $\mathbb{C}^{3}=\mathbb{P}^{3}\backslash\wp$ is isomorphic to the one given by Theorem\ref{Corresp}.
\end{theorem}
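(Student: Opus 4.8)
The plan is to check, in order, that the complex \eqref{perfect-monad -ideal} with maps \eqref{alpha3} really is a complex, that it is exact away from degree $0$, and that its zeroth cohomology is the ideal sheaf of the zero-dimensional subscheme attached to $[X]$ by Theorem \ref{Corresp}. The bookkeeping is organized by the substitution $\mathsf{C}_i:=B_iz_3-z_i$ for $i=0,1,2$: by \eqref{commutation} one has $[\mathsf{C}_i,\mathsf{C}_j]=[B_i,B_j]z_3^2=0$, so $\mathsf{C}_0,\mathsf{C}_1,\mathsf{C}_2$ are three pairwise commuting $\mathcal{O}_{\p3}(1)$-valued endomorphisms of $V_1$, and with respect to them $\alpha_{-2},\alpha_{-1},\alpha_0$ are precisely the three Koszul-type arrays in $\mathsf{C}_0,\mathsf{C}_1,\mathsf{C}_2$, the last one augmented by the column $Iz_3$. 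With this rewriting, $\alpha_{-1}\alpha_{-2}=0$ and $\alpha_0\alpha_{-1}=0$ each follow from the commutation relations after a single matrix multiplication, so $P^{\bullet}$ is a complex (of the linear form required of a perfect extended monad); and $\alpha_0$ is surjective exactly because $X$ is stable, by the Lemma immediately preceding this statement.

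For exactness at degrees $-2$ and $-1$ I would work on the two pieces $\C^3=\{z_3\ne0\}$ and $\wp$. On $\C^3$, after taking global sections the complex becomes the Koszul complex $K_\bullet(C_0,C_1,C_2;M)$ of the affine specializations $C_i=B_i-Z_i$ acting on the free module $M:=V_1\otimes R$, $R:=\C[Z_0,Z_1,Z_2]$, with the degree-$0$ term enlarged by $W\otimes R$ and the differential $\alpha_0$ extended on that summand by $r\mapsto r\cdot I(1)$ — an enlargement affecting only degrees $0$ and $1$. Now $(C_0,C_1,C_2)$ is an $M$-regular sequence: $C_0$ is injective on $M$ because $\det C_0=\chi_{B_0}(Z_0)$ is a nonzerodivisor in $R$; $M/C_0M\cong V_1\otimes\C[Z_1,Z_2]$ with $Z_0$ acting as $B_0$, on which $C_1$ is injective for the same reason (using $[B_0,B_1]=0$); and likewise for $C_2$. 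Hence $K_\bullet$ is acyclic in positive homological degrees, so $\mathcal{H}^{-2}(P^{\bullet})$ and $\mathcal{H}^{-1}(P^{\bullet})$ are supported on $\wp$. To kill them there too, restrict \eqref{alpha3} to $\wp=\{z_3=0\}$: then $\mathsf{C}_i$ becomes $-z_i$ and $Iz_3$ becomes $0$, so $P^{\bullet}|_{\wp}$ is the direct sum of $V_1\otimes\mathsf{K}^{\bullet}$, where $\mathsf{K}^{\bullet}$ is the Koszul complex of $z_0,z_1,z_2$ on $\wp\cong\p2$ twisted by $\mathcal{O}_{\wp}(1)$ — exact, since $z_0,z_1,z_2$ generate the irrelevant ideal — and the complex $W\otimes\mathcal{O}_{\wp}$ placed in degree $0$. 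Thus $\mathcal{H}^i(P^{\bullet}|_{\wp})=0$ for $i\ne0$; plugging this into the long exact cohomology sequence of the termwise-exact sequence $0\to P^{\bullet}(-1)\xrightarrow{\,z_3\,}P^{\bullet}\to P^{\bullet}|_{\wp}\to0$ (and using $\mathcal{H}^\bullet(P^{\bullet}(-1))=\mathcal{H}^\bullet(P^{\bullet})(-1)$) shows multiplication by $z_3$ is surjective on $\mathcal{H}^{-2}(P^{\bullet})$ and on $\mathcal{H}^{-1}(P^{\bullet})$; since these sheaves are supported on $\{z_3=0\}$, Nakayama forces them to vanish. Together with the surjectivity of $\alpha_0$, this shows $P^{\bullet}$ is a perfect extended monad.

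It remains to identify $\mathcal{F}:=\mathcal{H}^0(P^{\bullet})$. Since the last row of $\alpha_{-1}$ is zero, the projection $P^0=(V_1^{\oplus3}\oplus W)\otimes\mathcal{O}_{\p3}\twoheadrightarrow W\otimes\mathcal{O}_{\p3}\cong\mathcal{O}_{\p3}$ annihilates $\im\alpha_{-1}$ and so descends to a morphism $\iota\colon\mathcal{F}\to\mathcal{O}_{\p3}$. On $\C^3$, writing $\mathcal{F}|_{\C^3}=\ker\bar\alpha_0/\im\bar\alpha_{-1}$, the image of this projection is $\{r\in R\mid r\cdot I(1)\in C_0M+C_1M+C_2M\}$; using that the evaluation map $\mathrm{ev}\colon M\to V_1$, $v\otimes p\mapsto p(B_0,B_1,B_2)v$, has kernel $C_0M+C_1M+C_2M$ and that $\mathrm{ev}(r\cdot I(1))=r(B_0,B_1,B_2)I(1)=\Phi_X(r)$, this image equals $\ker\Phi_X$, while the kernel of the projection is $\ker d_1^{\mathrm{Kos}}=\im d_2^{\mathrm{Kos}}=\im\bar\alpha_{-1}$ by Koszul exactness. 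Hence $\iota|_{\C^3}$ is the inclusion $\ker\Phi_X\hookrightarrow R$, so $\mathcal{F}|_{\C^3}$ is the ideal sheaf on $\C^3$ of $Z:=\operatorname{Spec}(R/\ker\Phi_X)$ — which, under $R/\ker\Phi_X\cong V_1$, is exactly the subscheme assigned to $[X]$ by Theorem \ref{Corresp}. To conclude that $\mathcal{F}$ is globally an ideal sheaf I would show $\iota$ is an isomorphism near $\wp$: on $\{z_0\ne0\}$, say, $\mathsf{C}_0/z_0=B_0(z_3/z_0)-1$ is invertible at every point of $\wp$, and using this invertible $c\times c$ block to trivialize $\alpha_{-2}$ and $\alpha_{-1}$ locally one checks directly that there $\mathcal{F}$ is free of rank one and $\iota$ is the identity. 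Then $\iota$ is a monomorphism with $\coker\iota=\mathcal{O}_Z$, so $\mathcal{F}=\mathcal{I}_Z$, an ideal sheaf restricting on $\C^3$ to the one of Theorem \ref{Corresp}. (Alternatively, once $P^{\bullet}$ is known to be a perfect extended monad, one could invoke the full faithfulness of the cohomology functor, Corollary \ref{fully-faithfull}, to compare $P^{\bullet}$ with the monad produced from $Z$ by Corollary \ref{perfect-ideal}.)

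The step I expect to be the real obstacle is the analysis along $\wp$: everything over $\C^3$ is a transparent Koszul-complex computation, but on the hyperplane one must reconcile the ``Koszul in $z_0,z_1,z_2$'' behavior of $P^{\bullet}$ with the nonsingularity of the $\mathsf{C}_i$, and it is precisely there that the vanishing of $\mathcal{H}^{-2}$ and $\mathcal{H}^{-1}$ and the fact that $\iota$ is genuinely injective (not merely generically so) have to be secured — either through the Nakayama argument above or through the explicit local trivialization.
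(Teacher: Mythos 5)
Your proposal is correct, and its backbone --- restrict to $\C^3$, project the degree-zero cohomology onto the $W$-summand, and identify the resulting ideal with $\ker\Phi_X$ --- is exactly the route the paper takes. Your evaluation-map computation (the kernel of $\mathrm{ev}\colon V_1\otimes R\to V_1$ is $\sum_i C_iM$, so the image of the projection is $\{r \mid r(B_0,B_1,B_2)I(1)=0\}$) is the same calculation that the paper carries out by hand via the binomial expansion of $f(z_0,z_1,z_2)$ around $(B_0,B_1,B_2)$, just packaged more cleanly. Where you genuinely diverge is precisely at the points you flag as the real obstacles: the paper's proof \emph{asserts} that the data assemble into a perfect extended monad and that the projection $\ker a_0/\operatorname{im}a_{-1}\to\mathcal{O}_{\C^3}$ is injective, and offers no argument for exactness at degrees $-2$ and $-1$, for the injectivity of that projection, or for the cohomology being an ideal sheaf globally on $\p3$ rather than merely over $\C^3$. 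Your regular-sequence argument over $\C^3$ (injectivity of each $C_i$ from the nonvanishing of $\det(B_i-Z_i)$, hence Koszul acyclicity in positive homological degrees), the splitting of $P^{\bullet}|_{\wp}$ into an exact twisted Koszul complex in $z_0,z_1,z_2$ plus $W\otimes\mathcal{O}_{\wp}$ in degree zero, and the $z_3$-multiplication long exact sequence combined with Nakayama to kill $\mathcal{H}^{-1}$ and $\mathcal{H}^{-2}$ are all correct and supply exactly what the paper leaves unproved; the same goes for the Koszul-exactness proof that $\iota$ is injective on cohomology and for the local trivialization near $\wp$ (or the alternative appeal to Corollary \ref{fully-faithfull}). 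In short: same strategy as the paper for the identification with $\ker\Phi_X$, but with the structural claims about the complex actually established rather than asserted --- your version is the more complete of the two.
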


\begin{proof}
The proof is an easy generalization of \cite[Proposition 2.8]{N2}
\end{proof}

The automorphisms of $P^{\bullet}$ are clearly given by the action of the group $GL(V_{1}).$ Since, by Corollary \ref{fully-faithfull}, the cohomology functor is fully faithful, we recover the correspondence, given in Section \ref{Matrix-Para}, between equivalence classes of ideal sheaves
$\mathcal{I}_{Z}$ and the space $\mathcal{M}(3,c)$ defined as the quotient
$\mathcal{V}(3,c)^{st}/GL(V_{1})$, in the $3$-dimensional case.

\vspace{0.5cm}
We complete this section by writing down the maps $\alpha_{0}$ and $\alpha_{-1}$ in the more general $n$-dimensional case. Starting with a hyperplane $\wp\subset\pn$ and a $0$-dimensional subscheme $Z\subset\pn\setminus\wp$, the maps $\alpha_{-i}$ in the corresponding perfect extended monad can also be constructed as done above for the $3$-dimensional case:
\begin{equation}\begin{split}
&\alpha_{0}=\left(\begin{array}{lllll} B_{0}z_{n}-z_{0}&B_{1}z_{n}-z_{1} & \cdots&B_{n-1}z_{n}-z_{n-1}&Iz_{n}\end{array}\right). \\
&\alpha_{-1}=\left(\begin{array}{llll} A_{0} & A_{1}  & \cdots & A_{n-2} \\ \hspace{0.1cm}0 & \hspace{0.1cm}0 & \cdots & \hspace{0.3cm}0 \end{array}\right).
\end{split}
\end{equation}
where each block $A_{i},$ $0\leq i\leq n-2$ is an $[ n\cdot c \times (n-i-1)\cdot c]$-matrix of the form
{\small
\begin{displaymath}
A_{i}=\left( \begin{array}{lllll}
\hspace{1cm}0&\hspace{1cm}0&\hspace{1cm}0&\cdots&\hspace{1cm}0\\
\hspace{1cm}0 &\hspace{1cm}0&\hspace{1cm}0&\cdots& \hspace{1cm}0\\
\hspace{1cm}\vdots &\hspace{1cm}\vdots&\hspace{1cm}\vdots&\cdots&\hspace{1cm}\vdots \\
\hspace{1cm}0&\hspace{1cm}0&\hspace{1cm}0&\cdots&\hspace{1cm}0\\
B_{i+1}z_{n}-z_{i+1}&B_{i+2}z_{n}-z_{i+2}&B_{i+3}z_{n}-z_{i+3}&\cdots&B_{n-1}z_{n}-z_{n-1} \\
-B_{i}z_{n}+z_{i} &\hspace{1cm}0&\hspace{1cm}0&\cdots&\hspace{1cm}0 \\
 \hspace{1cm}0&-B_{i}z_{n}+z_{i}&\hspace{1cm}0&\cdots&\hspace{1cm}0 \\
 \hspace{1cm}0&\hspace{1cm}0&-B_{i}z_{n}+z_{i}&\cdots&\hspace{1cm}0 \\
 \hspace{1cm}0&\hspace{1cm}0&\hspace{1cm}0&\cdots&\hspace{1cm}0 \\
\hspace{1cm}\vdots&\hspace{1cm}\vdots&\hspace{1cm}\vdots&\ddots&\hspace{1cm}0 \\
 \hspace{1cm}0&\hspace{1cm}0&\hspace{1cm}0&\cdots&-B_{i}z_{n}+z_{i}
\end{array}\right)
\end{displaymath}
}
The first non vanishing line in $A_{i}$ is $(i+1)$-th one. For instance, when $n=3$ there are two blocks $$A_{0}=\left(\begin{array}{ll} \hspace{0.3cm}B_{1}z_{3}-z_{1} & \hspace{0.3cm}B_{2}z_{3}-z_{2} \\ -B_{0}z_{3}+z_{0}&\hspace{0.8cm}0\\ \ \hspace{0.8cm}0&-B_{0}z_{3}+z_{0} \end{array}\right)
\hspace{1cm} A_{1}=\left(\begin{array}{l}\hspace{0.8cm}0\\ \hspace{0.3cm}B_{2}z_{3}-z_{2}\\-B_{1}z_{3}+z_{1} \end{array}\right),$$ of respective sizes $[3\cdot c \times 2\cdot c]$ and $[3\cdot c \times 1\cdot c].$
One can similarly show that $\alpha_{0}\circ\alpha_{-1}=0\Leftrightarrow [B_{i},B_{j}]=0,$ for all $0\leq i,j\leq n-1.$ and that the map $\alpha_{0}$ is surjective if and only if the ADHM datum
$(B_{0},\dots,B_{n-1}, I)$ is stable.
 
Once again, this reflects the set theoretic bijection between the Hilbert scheme of length $c$ zero-dimensional subschemes of $\mathbb{C}^{n}\simeq\pn\setminus\wp$ and the quotient space
$\mathcal{M}(n,c):=\mathcal{V}(n,c)^{st}/GL(V_{1}).$


\subsection{Representability of the Hilbert functor of points}\label{Rep-mod-functor}

Let us start this Section by introducing notation; for every two sheaves, $\mathcal{F}$ on $\pn$ and $\mathcal{G}$ on a scheme $S$, we put $\mathcal{F}\boxtimes \mathcal{G}:=p^{\ast}\mathcal{F}\otimes q^{\ast}\mathcal{G},$ where $p:\pn\times S\longrightarrow\pn$ is the projection on the first factor and $q$ is the projection $\mathbb{P}^{n}\times S\longrightarrow S$ on the second one. We also denote by $k(s)$ the residue field of a closed point $s\in S$.

Using the ingredients developed in the previous sections, we now proceed to prove that
$\mathcal{M}(n,c)$ represents the Hilbert functor $ \mathcal{H}\textnormal{ilb}_{\C^n}^{[c]}:\mathfrak{S}ch\to\mathfrak{S}et $ from the category of schemes $\mathfrak{S}ch$ to the category of sets $\mathfrak{S}et$, which associates to every scheme $S$ the set
$$ \mathcal{H}\textnormal{ilb}_{\C^n}^{[c]}(S) := 
\left\{
\hspace{-0.2cm}\begin{array}{c} 
\phantom{.} \\ Z\subset\C^n\times S \\ \phantom{.} \\ \phantom{.} \end{array} \right.
\left|
\begin{array}{l} 
Z\textnormal{ is a closed subscheme,} \\ 
\begin{array}{ccc}
Z & \hookrightarrow & \C^n\times S \\
\pi\downarrow & &\downarrow q \\ S & \simeq & S
\end{array} \hspace{0.4cm} \textnormal{with }\pi\textnormal{ flat, and} \\
\chi(\mathcal{O}_{\pi^{-1}(s)}\otimes\mathcal{O}_{\C^n}(m))=c \quad, \quad \forall m\in\Z.
\end{array}
\right\}
$$
of flat families of $0$-dimensional subschemes of $\C^n$.

For any noetherian scheme $S$ of finite type over the field of
complex numbers $\C$, consider the following diagram:
\begin{displaymath}
\xymatrix@1{\mathbb{P}^{n}\times\mathbb{P}^{n}\times S \ar[r]^{\quad pr_{13}}\ar[d]_{pr_{23}} & \mathbb{P}^{n}\times S \ar[d]^{q} \\
\mathbb{P}^{n}\times S \ar[r]_{q}& S
}
\end{displaymath}
and the relative Euler sequence:
$$
0\longrightarrow\mathcal{O}_{\mathbb{P}^{n}\times S}(-1)\longrightarrow\mathcal{O}^{\oplus(n+1)}_{\mathbb{P}^{n}\times
S}\longrightarrow T\mathbb{P}^{n}(-1)\boxtimes\mathcal{O}_{S}\longrightarrow 0
$$
where $T\mathbb{P}^{n}(-1)$ is tangent bundle. One has the following:

\begin{theorem}[Relative Beilinson's Theorem.]
For every coherent sheaf $\mathcal{E}$ on $\mathbb{P}^{n}\times S$ there is a spectral sequence $E^{i,j}_{r}$ with $E_{1}$-term $$E_{1}^{i,j}=\mathcal{O}_{\mathbb{P}^{n}}(i)\boxtimes\mathcal{R}^{j}q_{\ast}(\mathcal{E}\otimes\Omega_{\mathbb{P}^{n}\times S/S}^{-i}(-i))$$ which converges to $$E_{\infty}^{i,j}=\left\{\begin{array}{ll} \mathcal{E}& i+j=0 \\ 0 & {\rm otherwise.} \end{array}\right.$$
\end{theorem}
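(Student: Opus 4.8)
The plan is to prove this exactly as the absolute Beilinson theorem is proved in \cite[Theorem 3.1.4]{OSS}, namely by resolving the relative diagonal. Write $\mathbb{P}^{n}_{(1)}$ and $\mathbb{P}^{n}_{(2)}$ for the two copies of $\mathbb{P}^{n}$ in the diagram above, so that $pr_{13}$ projects the triple product $T:=\mathbb{P}^{n}_{(1)}\times\mathbb{P}^{n}_{(2)}\times S$ onto $\mathbb{P}^{n}_{(1)}\times S$ and $pr_{23}$ onto $\mathbb{P}^{n}_{(2)}\times S$; let $pr_{1},pr_{2}$ denote the further projections onto the bare factors, and let $\Delta\times S\subset T$ be the relative diagonal. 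First I would observe that the relative Euler sequence displayed above yields, just as over a point, a regular section of the rank-$n$ bundle $pr_{1}^{\ast}\mathcal{O}_{\mathbb{P}^{n}}(1)\otimes pr_{2}^{\ast}(T\mathbb{P}^{n}(-1))$ with zero locus $\Delta\times S$, so that its Koszul complex is a locally free resolution $\mathcal{K}^{\bullet}\to\mathcal{O}_{\Delta\times S}$ with terms $\mathcal{K}^{i}=pr_{1}^{\ast}\mathcal{O}_{\mathbb{P}^{n}}(i)\otimes pr_{2}^{\ast}\bigl(\Omega^{-i}_{\mathbb{P}^{n}}(-i)\bigr)$ for $-n\le i\le 0$ (using $\bigwedge^{k}(\Omega^{1}_{\mathbb{P}^{n}}(1))=\Omega^{k}_{\mathbb{P}^{n}}(k)$). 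This $\mathcal{K}^{\bullet}$ is nothing but the pullback, along the $S$-flat projection $T\to\mathbb{P}^{n}\times\mathbb{P}^{n}$, of the usual Beilinson resolution of $\Delta\subset\mathbb{P}^{n}\times\mathbb{P}^{n}$, and since the differentials of $\mathbb{P}^{n}\times S\to S$ are the pullback of those of the fibre, the sheaf $pr_{2}^{\ast}\Omega^{-i}_{\mathbb{P}^{n}}(-i)$ is $pr_{23}^{\ast}\bigl(\Omega^{-i}_{\mathbb{P}^{n}\times S/S}(-i)\bigr)$, which is the twist occurring in the statement.

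Next, for a coherent sheaf $\mathcal{F}$ on $\mathbb{P}^{n}\times S$, regarded as living on the second factor $\mathbb{P}^{n}_{(2)}\times S$, I would form the bounded complex $\mathcal{C}^{\bullet}:=pr_{23}^{\ast}\mathcal{F}\otimes\mathcal{K}^{\bullet}$ on $T$ and compute $\mathbf{R}(pr_{13})_{\ast}\mathcal{C}^{\bullet}$ in two ways. For the abutment, the crucial point is that both $pr_{13}$ and $pr_{23}$ restrict to \emph{isomorphisms} $\Delta\times S\simto\mathbb{P}^{n}_{(1)}\times S$ and $\Delta\times S\simto\mathbb{P}^{n}_{(2)}\times S$; combined with flatness of $pr_{23}$, the projection formula for the closed immersion $\Delta\times S\hookrightarrow T$ identifies $pr_{23}^{\ast}\mathcal{F}\otimes\mathcal{O}_{\Delta\times S}$ with the (underived) pushforward of $\mathcal{F}$ to $\Delta\times S$ and makes the higher local $\mathcal{T}or$ sheaves of $pr_{23}^{\ast}\mathcal{F}$ against $\mathcal{O}_{\Delta\times S}$ vanish. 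Hence $\mathcal{C}^{\bullet}$ has cohomology only in degree $0$, equal there to that pushforward; pushing down once more along $\Delta\times S\simto\mathbb{P}^{n}_{(1)}\times S$ gives $\mathbf{R}(pr_{13})_{\ast}\mathcal{C}^{\bullet}\cong\mathcal{F}$, concentrated in degree $0$.

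Finally I would feed $\mathcal{C}^{\bullet}$ into the hypercohomology spectral sequence of $\mathbf{R}(pr_{13})_{\ast}$ coming from the filtration by complex degree: it has $E_{1}^{i,j}=R^{j}(pr_{13})_{\ast}(\mathcal{C}^{i})$ and converges to $R^{i+j}(pr_{13})_{\ast}\mathcal{C}^{\bullet}$, which by the previous step is $\mathcal{F}$ for $i+j=0$ and $0$ otherwise, i.e.\ the stated abutment. To identify $E_{1}$, write $\mathcal{C}^{i}=pr_{1}^{\ast}\mathcal{O}_{\mathbb{P}^{n}}(i)\otimes pr_{23}^{\ast}\bigl(\mathcal{F}\otimes\Omega^{-i}_{\mathbb{P}^{n}\times S/S}(-i)\bigr)$; since $pr_{1}^{\ast}\mathcal{O}_{\mathbb{P}^{n}}(i)=pr_{13}^{\ast}\bigl(\mathcal{O}_{\mathbb{P}^{n}}(i)\boxtimes\mathcal{O}_{S}\bigr)$ is a line bundle the projection formula pulls it outside $R^{j}(pr_{13})_{\ast}$, and flat base change along the Cartesian square formed by $pr_{13},pr_{23}$ and the two copies of $q:\mathbb{P}^{n}\times S\to S$ (the relevant one being flat) gives $R^{j}(pr_{13})_{\ast}pr_{23}^{\ast}(-)\cong q^{\ast}\mathcal{R}^{j}q_{\ast}(-)$. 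Combining, $E_{1}^{i,j}\cong\mathcal{O}_{\mathbb{P}^{n}}(i)\boxtimes\mathcal{R}^{j}q_{\ast}\bigl(\mathcal{F}\otimes\Omega^{-i}_{\mathbb{P}^{n}\times S/S}(-i)\bigr)$, as claimed. The step I expect to need genuine care is the $\mathcal{T}or$-vanishing above: it has to hold for an \emph{arbitrary} coherent $\mathcal{F}$, with no flatness hypothesis over $S$, and it is precisely what allows the honest complex $\mathcal{C}^{\bullet}$ to stand in for the derived tensor product; granting it, the identification of $E_{1}$ and of the abutment is a routine diagram chase with the projection formula and flat base change, uniform in $S$.
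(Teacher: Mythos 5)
Your proof is correct, and it is the standard argument: the paper itself states the Relative Beilinson Theorem without proof, merely setting up the fibre-product diagram and the relative Euler sequence as a pointer to exactly the relativization of \cite[Theorem 3.1.4]{OSS} that you carry out. The one step you rightly single out as delicate --- vanishing of the higher $\mathcal{T}or$ sheaves of $pr_{23}^{\ast}\mathcal{F}$ against $\mathcal{O}_{\Delta\times S}$ for arbitrary coherent $\mathcal{F}$ with no flatness over $S$ --- is handled correctly: since $\Delta\times S$ is the image of a section of the flat morphism $pr_{23}$, one has $L\iota^{\ast}pr_{23}^{\ast}\mathcal{F}\cong L(pr_{23}\circ\iota)^{\ast}\mathcal{F}\cong\mathcal{F}$ concentrated in degree zero, which is precisely your claim. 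The remaining identifications via the projection formula and flat base change along the Cartesian square are routine, as you say.
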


Let $\mathcal{J}$ be an $S$-flat family of ideal sheaves of $0$-dimensional subschemes of $\pn$ of length $c$ , for a noetherian scheme $S$ of finite type.

\begin{theorem}\label{hilbert functor}
There exists an $1$-extended monad given by

{\small $$ 0\to\mathcal{O}_{\pn}(1-n)\boxtimes
\mathcal{R}^{1}q_{\ast}(\mathcal{J}\otimes\Omega_{\pn\times S /S}^{n}(n-1)) \! \to \!
\mathcal{O}_{\pn}(2-n)\boxtimes
\mathcal{R}^{1}q_{\ast}(\mathcal{J}\otimes\Omega_{\pn\times S /S}^{n-1}(n-2)) \! \to \! \cdots
 $$
\begin{equation}\label{Universal-extended}
\cdots\to \mathcal{O}_{\pn}\boxtimes
\mathcal{R}^{1}q_{\ast}(\mathcal{J}\otimes\Omega_{\pn\times S /S}^{1}) \to 
\mathcal{O}_{\pn}(1)\boxtimes
\mathcal{R}^{1}q_{\ast1}(\mathcal{J}\otimes p^{\ast}\mathcal{O}_{\pn}(-1))\to0
\end{equation} }
such that its cohomology is exactly the family $\mathcal{J}.$
\end{theorem}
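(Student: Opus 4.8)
The plan is to run the Relative Beilinson's Theorem on the twisted family $\mathcal{J}(-1):=\mathcal{J}\otimes p^{\ast}\mathcal{O}_{\pn}(-1)$ and to show that the resulting spectral sequence degenerates onto a single row, mirroring the argument in the proof of Proposition \ref{Character}; the novelty is that every cohomological vanishing now has to be read off fibrewise and then pushed down to $S$ by base change. Applying the Relative Beilinson's Theorem to $\mathcal{F}=\mathcal{J}(-1)$ yields a spectral sequence with
$$ E_{1}^{i,j}=\mathcal{O}_{\pn}(i)\boxtimes\mathcal{R}^{j}q_{\ast}\bigl(\mathcal{J}\otimes\Omega_{\pn\times S/S}^{-i}(-i-1)\bigr),\qquad -n\le i\le 0, $$
converging to $\mathcal{J}(-1)$ placed in total degree $0$. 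I would then twist the surviving row by $p^{\ast}\mathcal{O}_{\pn}(1)$ and reindex $i\mapsto i+1$ to land exactly on the complex \eqref{Universal-extended}.

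The first step is a base-change reduction. Since $\mathcal{J}$ is $S$-flat and the relative differentials $\Omega^{p}_{\pn\times S/S}$ are locally free (hence $S$-flat) with fibre $\Omega^{p}_{\pn}$ over each closed point $s\in S$, the sheaf $\mathcal{J}\otimes\Omega^{-i}_{\pn\times S/S}(-i-1)$ is $S$-flat and its restriction to the fibre over $s$ is $\mathcal{I}_{Z_{s}}\otimes\Omega^{-i}_{\pn}(-i-1)$, where $Z_{s}\subset\pn$ is the length-$c$ zero-dimensional subscheme cut out by $\mathcal{J}_{s}$; here one uses that $\mathcal{O}_{Z}$ is $S$-flat, so that $0\to\mathcal{J}\to\mathcal{O}_{\pn\times S}\to\mathcal{O}_{Z}\to0$ stays exact after restriction to the fibre. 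By Proposition \ref{vanishing} applied to $\mathcal{E}=\mathcal{I}_{Z_{s}}$ (the case $r=1$), one has $\ho^{j}(\pn,\mathcal{I}_{Z_{s}}\otimes\Omega^{-i}_{\pn}(-i-1))=0$ for all $j\ne1$ and $-n\le i\le0$, while $\ho^{1}$ has dimension $\dim V_{i+1}$, which is independent of $s$. The theorem on cohomology and base change then gives $\mathcal{R}^{j}q_{\ast}(\mathcal{J}\otimes\Omega^{-i}_{\pn\times S/S}(-i-1))=0$ for $j\ne1$, and shows that $\mathcal{R}^{1}q_{\ast}(\mathcal{J}\otimes\Omega^{-i}_{\pn\times S/S}(-i-1))$ is locally free of the rank predicted by Proposition \ref{vanishing} with formation commuting with arbitrary base change. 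In particular $E_{1}^{i,j}=0$ for $j\ne1$, and each term of the row $E_{1}^{\bullet,1}$ is locally free on $\pn\times S$.

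With only the row $j=1$ surviving, all higher differentials $d_{r}$, $r\ge2$, vanish, so the spectral sequence degenerates at $E_{2}=E_{\infty}$. The abutment then forces $E_{2}^{i,1}=0$ for $i\ne-1$, i.e. the complex $(E_{1}^{\bullet,1},d_{1})$ is exact everywhere except at $i=-1$, where $E_{2}^{-1,1}=E_{\infty}^{-1,1}\cong\mathcal{J}(-1)$ since the limit filtration on $\mathcal{J}(-1)$ has a single step. Twisting this complex by $p^{\ast}\mathcal{O}_{\pn}(1)$, using the identification
$$ E_{1}^{i,1}\otimes p^{\ast}\mathcal{O}_{\pn}(1)=\mathcal{O}_{\pn}(i+1)\boxtimes\mathcal{R}^{1}q_{\ast}\bigl(\mathcal{J}\otimes\Omega^{-i}_{\pn\times S/S}(-i-1)\bigr), $$
and reindexing $i\mapsto i+1$, produces exactly the complex \eqref{Universal-extended}. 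It is a $1$-extended monad in the relative sense because its terms are locally free by the previous paragraph, and its unique cohomology sheaf is $\mathcal{J}(-1)\otimes p^{\ast}\mathcal{O}_{\pn}(1)=\mathcal{J}$, as claimed; the ranks of the coefficient sheaves $\mathcal{R}^{1}q_{\ast}(\cdots)$ are then read off from Corollary \ref{perfect-ideal}, since these are locally free of fibrewise-constant rank.

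I expect the main obstacle to be the base-change step over a base $S$ that need not be reduced: one cannot simply invoke Grauert's criterion but must apply the sharper cohomology-and-base-change theorem, propagating the fibrewise vanishings downward in cohomological degree (first $\mathcal{R}^{N}q_{\ast}=0$, then $\mathcal{R}^{N-1}q_{\ast}=0$, and so on down to $\mathcal{R}^{2}q_{\ast}=0$), which makes $\mathcal{R}^{1}q_{\ast}$ base-change compatible and hence, together with the constancy of $h^{1}$ on fibres, locally free; the vanishing $q_{\ast}(\cdots)=0$ follows in turn from base-change compatibility of $\mathcal{R}^{1}$ and the fibrewise vanishing of $\ho^{0}$. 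Everything else is the relative analogue of the degeneration already carried out in Section \ref{ext-monads}.
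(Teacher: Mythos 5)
Your proposal is correct and follows essentially the same route as the paper: apply the relative Beilinson theorem to $\mathcal{J}(-1)$, use $S$-flatness and cohomology-and-base-change together with the fibrewise vanishings of Proposition \ref{vanishing} to kill all rows except $j=1$, and then twist back by $p^{\ast}\mathcal{O}_{\pn}(1)$. The paper's own proof is only a two-line sketch of exactly this argument, so your write-up simply supplies the details (the descending base-change induction and the degeneration at $E_2$) that the authors leave implicit.
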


\begin{proof}
By the relative Beilinson theorem, we only need the $S$-flatness of $\mathcal{J}$ and the fact that at point $s\in S$ one has
$$\mathcal{R}^{1}q_{\ast}(\mathcal{J}\otimes\Omega_{\pn\times S/S}^{-i}(1-i))\otimes k(s)\simeq
\ho^{1}(\pn,\mathcal{I}_{Z(s)}\otimes\Omega_{\pn}^{-i}(1-i)),$$
where $Z(s)$ is the $0$-dimensional subscheme of $\pn$ corresponding to the point $s\in S$. 
The rest of the proof follows from the vanishing properties of Lemma \ref{vanishing}.
\end{proof}

Therefore, on every point $s\in S,$ one has a perfect extended monad $ P^{\bullet}(s)$ given by
$$ \ho^{1}(\mathcal{I}_{Z(s)}\otimes\Omega_{\pn}^{n}(n-1))\otimes\mathcal{O}_{\pn}(1-n) \to
\ho^{1}(\mathcal{I}_{Z(s)}\otimes\Omega_{\pn}^{n-1}(n-2))\otimes\mathcal{O}_{\pn}(2-n) \to \cdots $$
$$ \cdots \to \ho^{1}(\mathcal{I}_{Z(s)}\otimes\Omega_{\pn}^{1})\otimes\mathcal{O}_{\pn}(-1) \to 
\ho^{1}(\mathcal{I}_{Z(s)}\otimes\mathcal{O}_{\pn}(-1))\otimes\mathcal{O}_{\pn}(1) $$

Moreover, in the case of the space $\mathcal{V}(n,c)^{st},$ defined in Section \ref{Matrix-Para}, just after Definition \ref{stability}, one has the \emph{universal extended monad}
{\small $$ 0\to\mathcal{O}_{\pn}(1-n)\boxtimes (V_{1}\otimes\mathcal{O}_{\mathcal{V}(n,c)^{st}}) \to 
\mathcal{O}_{\pn}(2-n)\boxtimes(V_{1}^{\oplus\binom{n}{n-1}}\otimes\mathcal{O}_{\mathcal{V}(n,c)^{st}})
\to \cdots $$
\begin{equation}\label{Universal-extended2}
\cdots \to 
\mathcal{O}_{\pn}\boxtimes((V_{1}^{\oplus n}\oplus W)\otimes\mathcal{O}_{\mathcal{V}(n,c)^{st}})
\to \mathcal{O}_{\pn}(1)\boxtimes(V_{1}\otimes\mathcal{O}_{\mathcal{V}(n,c)^{st}})\to0.
\end{equation}
}
Finally we have the following
\begin{theorem}\label{represent}
The scheme $\mathcal{M}(n,c)$ is a fine moduli space for the Hilbert functor
$\mathcal{H}\textnormal{ilb}_{\C^n}^{[c]}$ of $c$ points  on $\mathbb{C}^{n}$.
\end{theorem}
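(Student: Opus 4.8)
The plan is to promote the set-theoretic bijection of Theorem~\ref{Corresp} to a natural isomorphism of functors. Since $\mathcal{M}(n,c)$ is a GIT quotient, by the universal property of categorical quotients it suffices to produce a natural transformation $\mathcal{H}\textnormal{ilb}_{\C^n}^{[c]}\to\Hom(-,\mathcal{M}(n,c))$ that is a bijection on the functor of points, and then invoke Yoneda. So the proof breaks into three tasks: (1) given an $S$-flat family $Z\subset\C^n\times S$, produce a classifying morphism $S\to\mathcal{M}(n,c)$ functorially in $S$; (2) given a morphism $S\to\mathcal{M}(n,c)$, produce an $S$-flat family; (3) check these constructions are mutually inverse, which on closed points reduces to Theorem~\ref{Corresp}.

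\textbf{Step 1: from families to linear data.} Starting with an $S$-flat family of ideal sheaves $\mathcal{J}$ on $\pn\times S$ (the projective completion of the affine family along a fixed hyperplane $\wp$), Theorem~\ref{hilbert functor} gives the universal $1$-extended monad \eqref{Universal-extended} whose terms are $\mathcal{O}_{\pn}(i)\boxtimes\mathcal{E}_i$ with $\mathcal{E}_i=\mathcal{R}^1q_\ast(\mathcal{J}\otimes\Omega^{-i}_{\pn\times S/S}(1-i))$ locally free on $S$ of the ranks computed in Proposition~\ref{vanishing}. By the fiberwise analysis of Section~\ref{reduction}, after restricting to $\C^n\times S$ and choosing local trivializations of the $\mathcal{E}_i$, the differentials $\alpha_{-1},\alpha_0$ are determined (up to the $GL$-ambiguity coming from the choice of trivialization) by maps $B_0,\dots,B_{n-1}\in\End(\mathcal{E}_1)$ and $I:\mathcal{O}_S\to\mathcal{E}_1$; the relations $\alpha_0\circ\alpha_{-1}=0$ and the exactness $\ker\alpha_{-1}=\im\alpha_{-2}$ translate, exactly as in \eqref{commutation} and the surjectivity lemma, into the commutation relations and stability. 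This produces a morphism $S\to\mathcal{V}(n,c)^{\mathrm{st}}$ locally on $S$, well-defined globally after passing to the quotient $\mathcal{M}(n,c)=\mathcal{V}(n,c)^{\mathrm{st}}/G$; naturality in $S$ is immediate from the naturality (base-change compatibility) of $\mathcal{R}^1q_\ast$ in the relative Beilinson theorem.

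\textbf{Step 2: from linear data to families.} Conversely, given $f:S\to\mathcal{M}(n,c)$, pull back the universal monad \eqref{Universal-extended2} over $\mathcal{V}(n,c)^{\mathrm{st}}$; its cohomology sheaf is $S$-flat (flatness over $S$ follows from the fact that the terms are $q$-flat and the cohomology has constant Hilbert polynomial $c$ on fibers), restricts on $\C^n\times S$ to a flat family of ideal sheaves of length-$c$ subschemes, and hence defines an element of $\mathcal{H}\textnormal{ilb}_{\C^n}^{[c]}(S)$. That this descends from $\mathcal{V}(n,c)^{\mathrm{st}}$ to $\mathcal{M}(n,c)$ is precisely Corollary~\ref{fully-faithfull}: the cohomology functor on perfect extended monads is fully faithful over ACM varieties (and relatively so over $S$), so isomorphic monads—equivalently, $G$-equivalent data—yield isomorphic families.

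\textbf{Step 3 and the main obstacle.} That the two constructions are mutually inverse follows on closed points from Theorem~\ref{Corresp} together with the inverse construction of Theorem~\ref{reconstruction}; over a general base one combines this with the uniqueness built into Corollary~\ref{fully-faithfull}. I expect the genuine difficulty to be Step~1, specifically the descent argument: the classifying map to $\mathcal{V}(n,c)^{\mathrm{st}}$ only exists Zariski-locally on $S$ (wherever the locally free sheaves $\mathcal{E}_i$ trivialize), and one must verify that the transition data between local trivializations is exactly a cocycle valued in $G=GL(V)$ acting by the standard ADHM change of basis, so that the maps glue to a well-defined $S$-point of the quotient stack and hence—because $\mathcal{M}(n,c)$ is a geometric quotient with trivial stabilizers by Proposition~\ref{prop4}—a genuine morphism $S\to\mathcal{M}(n,c)$. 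Granting that, Yoneda's lemma identifies $\mathcal{M}(n,c)$ as the fine moduli space.
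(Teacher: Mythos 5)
Your outline cannot be matched line-by-line against the paper, because the paper's entire proof of Theorem \ref{represent} is the single sentence ``The proof is similar, \emph{mutatis mutandis}, to that of \cite[Theorem 4.2]{HJM}.'' What you have written is the standard argument that this citation gestures at, and it is consistent with all the machinery the paper actually builds (Theorem \ref{Corresp} on closed points, the relative Beilinson construction of Theorem \ref{hilbert functor}, the universal extended monad \eqref{Universal-extended2}, full faithfulness of the cohomology functor from Corollary \ref{fully-faithfull}, and the inverse construction of Theorem \ref{reconstruction}). So in spirit you are doing exactly what the authors intend; you are just supplying the details they outsource.

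Two points deserve tightening. First, in Step 2 your flatness argument (``constant Hilbert polynomial $c$ on fibers'') only implies $S$-flatness when $S$ is reduced; since the Hilbert functor is tested against arbitrary noetherian $S$, you should instead invoke the standard fact that if $C^{\bullet}$ is a bounded complex of $S$-flat coherent sheaves whose restriction to each fiber has cohomology concentrated in a single degree, then $\mathcal{H}^{0}(C^{\bullet})$ is $S$-flat and its formation commutes with base change. This is also the tool that makes Step 1 work over non-reduced bases, where fiberwise vanishing must be promoted to local freeness of the $\mathcal{R}^{1}q_{\ast}$ sheaves. Second, the descent step you flag as the ``main obstacle'' is indeed the crux, and your proposal leaves it at ``granting that.'' The resolution is available from what the paper proves: by Propositions \ref{prop4} and \ref{closed orbit} the $G$-action on $\mathcal{V}(n,c)^{\rm st}$ is free with closed orbits, so by Luna's slice theorem $\mathcal{V}(n,c)^{\rm st}\to\mathcal{M}(n,c)$ is a principal $G$-bundle (\'etale-locally trivial), and the transition functions between your local trivializations of $\mathcal{E}_{1}$ act on the remaining terms through the fixed isomorphisms of Proposition \ref{vanishing}, hence lie in $G=GL(V)$ acting by the ADHM rule; this is what glues the local classifying maps into a morphism $S\to\mathcal{M}(n,c)$ and lets the universal family descend. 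With those two repairs your argument is complete and is, as far as one can tell, the proof the paper intends.
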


\begin{proof}
The proof is similar, \emph{mutatis mutandis}, to that of \cite[Theorem 4.2]{HJM}.
\end{proof}

It follows by universality of the Hilbert scheme that

\begin{corollary}\label{identified}
$\Hilb^{[c]}({\mathbb{C}^{n}})\simeq\mathcal{M}(n,c)$ as schemes.
\end{corollary}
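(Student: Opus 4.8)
The plan is to deduce the corollary formally from Theorem \ref{represent} via Yoneda's lemma, with essentially no computation. By Theorem \ref{represent}, the scheme $\mathcal{M}(n,c)$ is a fine moduli space for the functor $\mathcal{H}\textnormal{ilb}_{\C^n}^{[c]}$, that is, there is an isomorphism of functors $\Hom(-,\mathcal{M}(n,c))\cong\mathcal{H}\textnormal{ilb}_{\C^n}^{[c]}$ from $\mathfrak{S}ch$ to $\mathfrak{S}et$. On the other hand, the Hilbert scheme $\Hilb^{[c]}(\C^n)$, which exists by Grothendieck's general construction \cite{Groth1}, represents by its defining universal property exactly the same functor: the functor of $S$-flat closed subschemes $Z\subset\C^n\times S$ with constant Hilbert polynomial $c$ spelled out at the beginning of Section \ref{Rep-mod-functor} is precisely the Hilbert functor whose representing object Grothendieck constructs, so the two descriptions require no reconciliation.

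First I would record the resulting chain of natural isomorphisms
$$ \Hom(-,\mathcal{M}(n,c))\;\cong\;\mathcal{H}\textnormal{ilb}_{\C^n}^{[c]}\;\cong\;\Hom(-,\Hilb^{[c]}(\C^n)) . $$
Then I would invoke full faithfulness of the Yoneda embedding $\mathfrak{S}ch\to\mathrm{Fun}(\mathfrak{S}ch^{\mathrm{op}},\mathfrak{S}et)$: an isomorphism between the functors of points of two schemes is induced by a unique isomorphism of the schemes themselves, so the composite above comes from a canonical isomorphism $\mathcal{M}(n,c)\simto\Hilb^{[c]}(\C^n)$, whose inverse is induced by the inverse natural transformation. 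This completes the argument.

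As an optional consistency remark I would add that evaluating this scheme isomorphism at $S=\spec\C$ recovers the set-theoretic bijection of Theorem \ref{Corresp}, since both maps are read off from the same universal family; this is not logically needed but links the functorial statement back to the earlier one. There is no genuine obstacle here, since all the work has already been carried out in Theorem \ref{represent} (and in the results on which it rests); the only point requiring attention is the verification that the Hilbert functor described in Section \ref{Rep-mod-functor} is literally the one represented by $\Hilb^{[c]}(\C^n)$, which is immediate from the definitions.
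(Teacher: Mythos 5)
Your proposal is correct and matches the paper's own argument: the paper derives Corollary \ref{identified} in one line from Theorem \ref{represent} by the universality (uniqueness of the representing object) of the Hilbert functor, which is exactly the Yoneda argument you spell out. You merely make explicit the routine check that the functor of Section \ref{Rep-mod-functor} is the one Grothendieck's Hilbert scheme represents, which the paper leaves implicit.
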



\section{The Hilbert scheme of points on affine varieties}\label{hilb_y}

In this section we realize a scheme-theoretic bijection between ideals of zero dimensional subschemes, with constant Hilbert polynomial $c,$ on affine varieties $\mathbb{Y}$ and points of a subscheme of $\Hilb^{[c]}(\mathbb{C}^{n})$ which is defined out of the ideal associated to $\mathbb{Y}.$ 

Let us consider the following data: Let $\mathbb{Y}=\mathcal{Z}(Z_{\mathbb{Y}})\subset\mathbb{C}^{n}$ be an affine variety, given by the zero locus of the ideal $Z_{\mathbb{Y}}\subsetneq\mathbb{C}[x_{1},\cdots,x_{n}].$ We denote by $A(\mathbb{Y})$ the affine coordinate ring of the variety $\mathbb{Y},$ i.e., $A(\mathbb{Y})=\frac{\mathbb{C}[x_{1},\cdots,x_{n}]}{Z_{\mathbb{Y}}}.$ 

To each stable datum $X=(B_{0},\cdots,B_{n-1},I),$ as defined in Section \ref{Matrix-Para}, one can associate a unique ideal $J\subset\mathbb{C}[x_{1},\cdots,x_{n}]$, up to a $GL(V)$ action, such that the quotient $\frac{\mathbb{C}[x_{1},\cdots,x_{n}]}{J}=V$ is a vector space of dimension $c.$ In other words, $J$ is an ideal corresponding to a zero dimensional subschemes, of length $c,$ of the affine space $\mathbb{C}^{n}.$ If we define
$$ \mathcal{V}_{\mathbb{Y}}(c)^{st} := \{X=(B_{0},\cdots,B_{n-1},I)\in\mathcal{V}(n,c)^{st}/f(B_{0},\cdots,B_{n-1})=0,\quad\forall f\in Z_{\mathbb{Y}}\}. $$ 
Hence we have the following:
\begin{theorem}\label{correspond2}
There exists a set-theoretical bijection between the quotient space $\mathcal{M}_{\mathbb{Y}}(c) := \mathcal{V}_{\mathbb{Y}}(c)^{st} / GL(V)$ and the Hilbert scheme $\Hilb^{[c]}(\mathbb{Y})$of $c$ points in $\mathbb{Y}.$
\end{theorem}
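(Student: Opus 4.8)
The plan is to mirror the proof of Theorem \ref{Corresp} in the relative setting, using the commutative diagram displayed just above the statement to move back and forth between ideals of $\mathbb{C}[x_{1},\dots,x_{n}]$ and ideals of $A(\mathbb{Y})$. First I would define the forward map
\[
\Psi_{\mathbb{Y}}\colon\mathcal{M}_{\mathbb{Y}}(c)\longrightarrow\Hilb^{[c]}_{\mathbb{Y}},\qquad [X]\longmapsto\ker\Phi'_{X},
\]
where, for a stable datum $X=(\vec{B},I)\in\mathcal{V}_{\mathbb{Y}}(c)^{st}$, the map $\Phi'_{X}\colon A(\mathbb{Y})\to V$ is the factorization of $\Phi_{X}$ through the quotient $\pi\colon\mathbb{C}[x_{1},\dots,x_{n}]\to A(\mathbb{Y})$; this factorization exists precisely because the hypothesis $f(\vec{B})=0$ for all $f\in Z_{\mathbb{Y}}$ forces $Z_{\mathbb{Y}}\subseteq\ker\Phi_{X}$. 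That $\ker\Phi'_{X}$ is a genuine point of $\Hilb^{[c]}_{\mathbb{Y}}$ follows from Lemma \ref{lema3} applied along the bottom row $0\to J'\to A(\mathbb{Y})\to V\to 0$ of the diagram: $\Phi'_{X}$ is surjective and $A(\mathbb{Y})/\ker\Phi'_{X}\cong V$ has dimension $c$. Independence of the chosen representative is the relative analogue of Lemma \ref{lema4}: replacing $X$ by $g\cdot X$ replaces $\Phi'_{X}$ by $g\circ\Phi'_{X}$, which has the same kernel.

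Next I would construct the inverse $\Psi'_{\mathbb{Y}}\colon\Hilb^{[c]}_{\mathbb{Y}}\to\mathcal{M}_{\mathbb{Y}}(c)$. Given an ideal $J'\subset A(\mathbb{Y})$ with $\dim_{\mathbb{C}}A(\mathbb{Y})/J'=c$, put $\Phi:=\Phi'\circ\pi\colon\mathbb{C}[x_{1},\dots,x_{n}]\to A(\mathbb{Y})/J'$ and $J:=\ker\Phi$, so that $\dim_{\mathbb{C}}\mathbb{C}[x_{1},\dots,x_{n}]/J=c$; Theorem \ref{Corresp} then furnishes a stable datum $X=(\vec{B},I)$, unique up to $GL(V)$, with $\ker\Phi_{X}=J$. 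The point requiring care is that this $X$ actually lies in $\mathcal{V}_{\mathbb{Y}}(c)^{st}$, i.e. $f(\vec{B})=0$ on all of $V$ for every $f\in Z_{\mathbb{Y}}$, not merely $f(\vec{B})I(1)=0$: one has $Z_{\mathbb{Y}}\subseteq J=\ker\Phi_{X}$, hence $f(\vec{B})I(1)=\Phi_{X}(f)=0$, and since $X$ is stable the vectors $B_{0}^{i_{0}}\cdots B_{n-1}^{i_{n-1}}I(1)$ span $V$ by Lemma \ref{lema5}, so $f(\vec{B})$ annihilates a spanning set and therefore vanishes. Thus $\Psi'_{\mathbb{Y}}(J'):=[X]$ is well defined.

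Finally I would verify that $\Psi_{\mathbb{Y}}$ and $\Psi'_{\mathbb{Y}}$ are mutually inverse, which reduces to the two identities already established inside the proof of Theorem \ref{Corresp}. Starting from $[X]$ and forming $J':=\ker\Phi'_{X}$, the composite $\Phi'_{X}\circ\pi$ is exactly $\Phi_{X}$, so $\Psi'_{\mathbb{Y}}$ reconstructs the datum with kernel $\ker\Phi_{X}$, which is $[X]$ by the $\Psi'\circ\Psi=\mathrm{id}$ half of Theorem \ref{Corresp}. Conversely, starting from $J'$ and forming $X$, one has $\Phi_{X}=\Phi'\circ\pi$, and a chase through the displayed diagram gives $\ker\Phi'_{X}=J'$, which is the $\Psi\circ\Psi'=\mathrm{id}$ half. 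I expect the only real obstacle to be this bookkeeping — in particular, keeping straight which maps descend to $A(\mathbb{Y})$ and confirming the stability argument that upgrades $f(\vec{B})I(1)=0$ to $f(\vec{B})=0$; once those two facts are in place, the statement is essentially Theorem \ref{Corresp} restricted along the surjection $\pi$.
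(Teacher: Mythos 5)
Your proposal is correct and follows essentially the same route as the paper: both directions are obtained by factoring $\Phi_{X}$ through $\pi\colon\mathbb{C}[x_{1},\dots,x_{n}]\to A(\mathbb{Y})$ and reducing to Theorem \ref{Corresp}, with the key point in the inverse direction being exactly the upgrade from $f(\vec{B})I(1)=0$ to $f(\vec{B})=0$ via stability (which works because $f(\vec{B})$ commutes with the $B_{i}$, so it kills the spanning set of Lemma \ref{lema5}). The paper presents this as the discussion preceding the theorem rather than as a formal proof, but the content is the same.
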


\subsection{Scheme structure on $\mathcal{M}_{\mathbb{Y}}(c)$}

The scheme structure of $\mathcal{M}_{\mathbb{Y}}(c)$ is given as the following:  For a given datum $X=(B_{0},\cdots,B_{n-1}, I)\in\mathcal{V}(n,c)^{st},$ the map $\phi_{X}'$ is given by $$\phi_{X}':\begin{array}{ccc} A(\mathbb{Y})& \to & V \\ \lbrack p\mod Z_{\mathbb{Y}} \rbrack & \mapsto &\phi'([p\mod(Z_{\mathbb{Y}})]):=[p(B_{0},\cdots,B_{n-1})\mod(Z_{\mathbb{Y}})]I(1).\end{array}$$
By stability, one has $\ker\phi_{X}'=\{f\in A(\mathbb{Y})=\frac{\mathbb{C}[x_{1},\cdots,x_{n}]}{Z_{\mathbb{Y}}} | f(B_{0},\cdots,B_{n-1})=0\}.$ In particular $f(B_{0},\cdots,B_{n-1})=0$ for all $f\in Z_{\mathbb{Y}}.$ Conversely, one can define an ideal $\tilde{Z}_{\mathbb{Y}}$ in the ring of regular functions $\Gamma(\mathcal{V}(n,c)^{st}),$  on $\mathcal{V}(n,c)^{st},$  which is defined by $\tilde{Z}_{\mathbb{Y}}=\{f(B_{0},\cdots,B_{n-1})=0\in\End(V)\textnormal{ for }f\in Z_{\mathbb{Y}}\}.$ Then $\mathcal{V}_{\mathbb{Y}}(c)^{st}$ is the subscheme of $\mathcal{V}(n,c)^{st}$ given by $\tilde{Z}_{\mathbb{Y}}\subset\Gamma(\mathcal{V}(n,c)^{st}).$ Of course, by choosing a basis for $V,$ one has $\End(V)\cong \Mat_{c\times c}(\mathbb{C})$ and every polynomial equation $f\in Z_{\mathbb{Y}}$ gives rise to $c$ polynomial equations in $\tilde{Z}_{\mathbb{Y}}$ given the rows of $f(B_{0},\cdots,B_{n-1}).$ Thus, if $Z_{\mathbb{Y}}$ is generated by $k$ element in $\mathbb{C}[x_{1},\cdots,x_{n}],$ then $\tilde{Z}_{\mathbb{Y}}$ will be generated by, at most, $c\times k$ polynomials in the entries $b_{ab}^{i}$ of $B_{i},$ for $i=0,\cdots,n-1,$ and $a,b=1,\cdots,c.$  

Now, consider  $\mathcal{O}_{\Hilb^{[c]}(\mathbb{C}^{n})}:=\bigoplus_{i\geq0}\Gamma(\mathcal{V}(n,c))^{G,\chi^{i}},$ where $\Gamma(\mathcal{V}(n,c))^{G,\chi^{i}}$ is the ring of equivariant regular functions on $\mathcal{V}(n,c),$ of weight $i$ with respect to the character $\chi$ corresponding to the $GL(V)$-action, as defined in Section \ref{Matrix-Para}. One can form the space {\small $$\mathcal{M}_{\mathbb{Y}}(c) :=
{\rm Proj}\left(\bigoplus_{i\geq0}\left(\frac{\Gamma(\mathcal{V}(n,c))}{\tilde{Z}_{\mathbb{Y}}}\right)^{G,\chi^{i}} \right) \hookrightarrow\Hilb^{[c]}(\mathbb{C}^{n}):=
{\rm Proj}\left(\bigoplus_{i\geq0}\Gamma(\mathcal{V}(n,c))^{G,\chi^{i}} \right).$$ }

Hence, $\mathcal{M}_{\mathbb{Y}}(c)$ is a closed subscheme of the Hilbert scheme of points $\Hilb^{[c]}(\mathbb{C}^{n}),$ and represents, of course a subfunctor, $\mathfrak{M}_{\mathbb{Y}}^{[c]}(\cdot)$ of the Hilbert functor $\mathcal{H}ilb_{\mathbb{C}^{n}}^{[c]}(\cdot)$

\subsection{The schematic isomorphism $\mathcal{M}_{\mathbb{Y}}(c)\cong\Hilb^{[c]}(\mathbb{Y})$}

Recall that the Hilbert scheme of points $\Hilb^{[c]}(\mathbb{Y})$ represents the functor $\mathcal{H}ilb_{\mathbb{Y}}^{[c]}(\cdot):\mathfrak{S}ch\to\mathfrak{S}et$ which associates to any noetherian scheme of finite type $S$ the set
$$ \mathcal{H}ilb_{\mathbb{Y}}^{[c]}(S) := 
\left\{
\hspace{-0.2cm}\begin{array}{c} 
\phantom{.} \\ Z\subset\mathbb{Y}\times S \\ \qquad\cap \\ \qquad\mathbb{C}^{n}\times S\end{array} \right.
\left|
\begin{array}{l} 
Z\textnormal{ is a closed subscheme,} \\ 
\begin{array}{ccc}
Z & \hookrightarrow & \mathbb{Y}\times S \\
\pi\downarrow & &\downarrow q \\ S & \simeq & S
\end{array} \hspace{0.4cm} \textnormal{with }\pi\textnormal{ flat, and} \\
\chi(\mathcal{O}_{\pi^{-1}(s)}\otimes\mathcal{O}_{\mathbb{Y}}(m))=c \quad, \quad \forall m\in\Z.
\end{array}
\right\}
$$
of flat families of $0$-dimensional subschemes of $\mathbb{Y}\stackrel{i}{\hookrightarrow}\mathbb{C}^{n}$ of length $c.$
In particular there exists a universal flat family $\mathfrak{X}\hookrightarrow\mathbb{Y}\times\Hilb^{[c]}(\mathbb{Y})$ of $0$-dimensional subschemes of length $c$ on $\mathbb{Y}.$ Moreover, $\mathfrak{X}$ is a flat subfamily of the universal family $\mathfrak{F}\subset\mathbb{C}^{n}\times\Hilb^{[c]}(\mathbb{C}^{n})$, of $0$-dimensional subschemes of length $c$ in $\mathbb{C}^{n}.$ This follows from the fact that $\mathcal{H}ilb_{\mathbb{Y}}^{[c]}(\cdot)\stackrel{\tilde{h}^{\natural}}{\to}\mathcal{H}ilb_{\mathbb{C}^{n}}^{[c]}(\cdot)$ is a subfunctor and $\mathfrak{X}$ is just the restriction $\tilde{h}^{\ast}\mathfrak{F}$ of $\mathfrak{F},$ for the morphism $\tilde{h}:=i\times h:\mathbb{Y}\times\Hilb^{[c]}(\mathbb{Y})\hookrightarrow\mathbb{C}^{[c]}\times\Hilb^{[c]}(\mathbb{C}^{n})$.
Furthermore, since the scheme $\mathcal{M}_{\mathbb{Y}}(c)$ also parametrizes $0$-dimensional subschemes of length $c$ in $\mathbb{Y}\hookrightarrow\mathbb{C}^{n},$ then any flat family $\mathfrak{Z}\subset\mathbb{Y}\times\mathcal{M}_{\mathbb{Y}}(c)$ is the pull-back of the family $\mathfrak{X}$ under a morphism $\mathbb{Y}\times\mathcal{M}_{\mathbb{Y}}(c)\stackrel{id_{\mathbb{Y}}\times f}{\to}\mathbb{Y}\times\Hilb^{[c]}(\mathbb{Y})$. One can resume the above situation in the following diagram:
{\small\begin{equation*}
\xymatrix@R-1.5pc@C-1.5pc{ \mathfrak{X}\ar[dd]\ar[drr]& & & &\mathfrak{Z}\ar[dd]\ar[dll]\ar[llll] \\
 & &\mathfrak{F}\ar[dd] & & \\
 \mathbb{Y}\times\Hilb^{[c]}(\mathbb{Y})\ar@{^{(}->}[rrd]_{\tilde{\beta}}\ar[dd]& &\ar@{-->}[ll]_{\tilde{f}} & &\mathbb{Y}\times\mathcal{M}_{\mathbb{Y}}(c)\ar@{--}[ll]\ar@{_{(}->}[lld]^{\tilde{\alpha}}\ar[dd] \\
 & & \mathbb{C}^{n}\times\Hilb^{[c]}(\mathbb{C}^{n})\ar[dd] & & \\
 \Hilb^{[c]}(\mathbb{Y})\ar@{^{(}->}[rrd]_{\beta}& & \ar@{-->}[ll]_{f}& & \mathcal{M}_{\mathbb{Y}}(c)\ar@{--}[ll]\ar@{_{(}->}[lld]^{\alpha} \\
 & & \Hilb^{[c]}(\mathbb{C}^{n})& & 
}
\end{equation*}}
where $\tilde{\alpha}:=i\times\beta,$ $\tilde{\beta}:=i\times\beta$ and 
$\tilde{f}:=id_{\mathbb{Y}}\times f.$  
On the other hand, $\mathcal{M}_{\mathbb{Y}}(c)$ represents a closed subfunctor of the Hilbert functor $\mathcal{H}ilb_{\mathbb{C}^{n}}^{[c]}(\bullet)$ such that, for any closed point $Spec(k),$ the following diagram commutes:
\begin{equation}\label{points}\xymatrix@R-1pc@C-1pc{ &Spec(k)\ar[rd]^{\rho_{\alpha}}\ar[ld]_{\rho_{\alpha}}\ar@{-}[d]^{\rho}& \\
\Hilb^{[c]}(\mathbb{Y})\ar@{^{(}->}[rd]^{\beta}&\ar[d]\ar@{-->}[l]&\mathcal{M}_{\mathbb{Y}}(c)\ar@{--}[l]_f\ar@{_{(}->}[ld]_{\alpha} \\
&\Hilb^{[c]}(\mathbb{C}^{n})& \\
}\end{equation}
One has $\Hom(Spec(k),\mathcal{M}_{\mathbb{Y}}(c))\cong\mathcal{H}ilb_{\mathbb{Y}}^{[c]}(Spec(k)),$ since $f$ is an isomorphism on points, as it follows from Theorem \ref{correspond2}. 

On the other hand, let us consider the restriction of the universal monad to $\mathbb{Y}\times\mathcal{V}_{\mathbb{Y}}(c)^{st}.$ This gives the following extended monad 

{\small $$ 0\to\mathcal{O}_{\mathbb{Y}}(1-n)\boxtimes (V_{1}\otimes\mathcal{O}_{\mathcal{V}_{\mathbb{Y}}(c)^{st}}) \to 
\mathcal{O}_{\mathbb{Y}}(2-n)\boxtimes(V_{1}^{\oplus\binom{n}{n-1}}\otimes\mathcal{O}_{\mathcal{V}_{\mathbb{Y}}(c)^{st}})
\to \cdots $$
\begin{equation}\label{Univ-ext-rest}
\cdots \to 
\mathcal{O}_{\mathbb{Y}}\boxtimes((V_{1}^{\oplus n}\oplus W)\otimes\mathcal{O}_{\mathcal{V}_{\mathbb{Y}}(c)^{st}})
\to \mathcal{O}_{\mathbb{Y}}(1)\boxtimes(V_{1}\otimes\mathcal{O}_{\mathcal{V}_{\mathbb{Y}}(c)^{st}})\to0.
\end{equation} }
 We denote by $\tilde{\mathfrak{Z}}$ the family of ideals which arises as its cohomology. The restriction at any point $Spec(k)\to\mathcal{V}_{\mathbb{Y}}(c)^{st}$ one has an $(n-2)$-extended monad whose cohomology is an ideal of $c$ points on $\mathbb{Y}$ represented by the stable datum $(B_{1},\cdots,B_{n},I)$ which satisfies $f(B_{1},\cdots,B_{n})=0,$ for all $f\in Z_{\mathbb{Y}}.$

Now for any noetherian scheme of finite type $S,$ parametrizing zero-dimensional schemes of $\mathbb{Y},$ suppose there exists a flat family of ideals $\eta$ on $\mathbb{Y}\times S.$  We consider an open cover $\{S_{i}\}_{i\in J}$ of $S.$ Then, for a fixed $i\in J,$ $\eta_{i}:=\eta({\mathbb{Y}\times S_{i}})$ is the cohomology of the following $(n-2)$-extended monad on $S_{i}$ 
{\small $$ 0\to\mathcal{O}_{\mathbb{Y}}(1-n)\boxtimes (V_{1}\otimes\mathcal{O}_{S_{i}}) \to 
\mathcal{O}_{\mathbb{Y}}(2-n)\boxtimes(V_{1}^{\oplus\binom{n}{n-1}}\otimes\mathcal{O}_{S_{i}})
\to \cdots $$
\begin{equation}
\cdots \to 
\mathcal{O}_{\mathbb{Y}}\boxtimes((V_{1}^{\oplus n}\oplus W)\otimes\mathcal{O}_{S_{i}})
\to \mathcal{O}_{\mathbb{Y}}(1)\boxtimes(V_{1}\otimes\mathcal{O}_{S_{i}})\to0.
\end{equation} }
At each point $s\in S_{i},$ this gives an $(n-2)$-extended monad whose cohomology is an ideal of zero dimensional subscheme of $\mathbb{Y}.$ In particular, there exists a datum in $\mathcal{V}_{\mathbb{Y}}(c)^{st}$ representing it. Since $S_{i}$ is an open subset, one obtains a morphism $g_{\eta_{i}}:S_{i}\to\mathcal{V}_{\mathbb{Y}}(c)^{st},$ such that on the overlaps of the form $S_{i}\cap S_{j}$ one has 
$$g_{\eta_{i}}(S)\sim_{{\tiny GL(V_{1})}}g_{\eta_{j}}(s), \qquad\forall s\in S_{i}\cap S_{j}.$$ 
Thus giving rise to a well defined morphism $g_{\eta}:S\to\mathcal{M}_{\mathbb{Y}}(c)$ such that $\eta=(id_{\mathbb{Y}}\times g_{\eta})^{\ast}(\mathfrak{Z}).$ In particular, there exists a morphism $h_{\mathfrak{X}}:\Hilb^{[c]}(\mathbb{Y})\to\mathcal{M}_{\mathbb{Y}}(c),$ such that $\mathfrak{X}=(id_{\mathbb{Y}}\times h_{\mathfrak{X}})^{\ast}\mathfrak{Z}$ and $\mathcal{H}ilb_{\mathbb{Y}}(Spec(k))\cong\Hom(Spec(k),\mathcal{M}_{\mathbb{Y}}(c)),$ that is, $h=f^{-1}$ as a set-theoretic maps.

Moreover, for the inclusions $\tilde{\alpha}:\mathbb{Y}\times\mathcal{M}_{\mathbb{Y}}(c)\to\mathbb{C}^{n}\times
\Hilb^{[c]}(\mathbb{C}^{n})$ and $\tilde{\beta}:\mathbb{Y}\times\Hilb^{[c]}(\mathbb{Y})\to\mathbb{C}^{n}\times
\Hilb^{[c]}(\mathbb{C}^{n})$ one has $\tilde{\alpha}_{\ast}\mathfrak{Z}(U):=\mathfrak{Z}(U\cap\mathbb{C}^{n}\times
\Hilb^{[c]}({\mathbb{C}^{n}}))$ and $\tilde{\beta}_{\ast}\mathfrak{X}(U):=\mathfrak{X}(U\cap\mathbb{C}^{n}\times
\Hilb^{[c]}({\mathbb{C}^{n}})),$ on every open $U\subset\mathbb{C}^{n}\times\Hilb^{[c]}({\mathbb{C}^{n}}).$ Furthermore for a point $x=Spec(k)$ as in \eqref{points}, we have an isomorphism of stalks $$\lim_{V\ni x}[\tilde{\alpha}_{\ast}\mathfrak{Z}(V)]\cong\lim_{V\ni x}[\tilde{\beta}_{\ast}\mathfrak{X}(V)],$$ for all $V\subseteq U,$ in some directed system. Finally, the families $\mathfrak{Z}$ and $\mathfrak{X}$ are both restrictions, of the universal family $\mathcal{J}\to\mathbb{C}^{n}\times\Hilb^{[c]}({\mathbb{C}^{n}}),$ to subschemes, with the same topological support, and such that $\mathfrak{X}=(id_{\mathbb{Y}}\times h_{\mathfrak{X}})^{\ast}\mathfrak{Z}=[id_{\mathbb{Y}}\times (h_{\mathfrak{X}}\circ f)]^{\ast}\mathfrak{X}$ and $\mathfrak{Z}=(id_{\mathbb{Y}}\times f)^{\ast}\mathfrak{Z}=[id_{\mathbb{Y}}\times (f\circ h_{\mathfrak{X}})]^{\ast}\mathfrak{Z}.$ Hence $f$ is lifted to an (unique) isomorphism of schemes
$\Hilb^{[c]}(\mathbb{Y})\cong\mathcal{M}_{\mathbb{Y}}(c),$ with inverse $h_{\mathfrak{X}}$.

\bigskip


\section{Irreducible components of the Hilbert scheme of points}\label{irred}

The variety $\mathcal{C}(n,c)$ of $n$ commuting $c\times c$ matrices have been much studied by various authors since a 1961 paper by Gerstenhaber \cite{G}. The results concerning the irreducibility of $\mathcal{C}(n,c)$ can be summarized as follows:
\begin{itemize}
\item $\mathcal{C}(2,c)$ is irreducible for every $c$ (originally proved by Motzkin and Taussky \cite{MT}, see also \cite{G});
\item $\mathcal{C}(3,c)$ is irreducible for $c\le 10$ and reducible for $c\ge 29$, see \cite[Section 7.9]{OCV},
\cite{HO,S} and the references therein;
\item for $n\ge4$, $\mathcal{C}(n,c)$ is irreducible if and only if $c\le3$ \cite{G}.
\end{itemize}
In particular, determining the highest possible value of $c$ for which $\mathcal{C}(3,c)$ is irreducible is an important open problem.

On the other hand, much less is known about the irreducibility of the Hilbert scheme
$\Hilb^{[c]}({\mathbb{C}^{n}})$ of $c$ points on $\mathbb{C}^{n}$, see for instance
\cite[Section 7]{CEVV} and the references therein. 
\begin{itemize}
\item $\Hilb^{[c]}({\mathbb{C}^{2}})$ is irreducible for every $c$, see \cite{Fog};
\item $\Hilb^{[c]}({\mathbb{C}^{3}})$ is irreducible for $c\le 8$ \cite[Theorem 1.1]{CEVV}, while it is reducible for $c\ge 78$, cf. \cite{Iar85};
\item for $n\ge 4$, $\Hilb^{[c]}({\mathbb{C}^{n}})$ is irreducible if and only if for $c\le 7$
\cite[Theorem 1.1]{CEVV}.
\end{itemize}
Similarly, determining the highest possible value of $c$ for which $\Hilb^{[c]}({\mathbb{C}^{3}})$ is irreducible is also an important open question.

In this section, we connect the two problems through the following result.

\begin{proposition}\label{number}
The number of irreducible components of $\Hilb^{[c]}({\mathbb{C}^{n}})$ is smaller than, or equal to, the number of irreducible components of $\mathcal{C}(n,c)$. In particular, if $\mathcal{C}(n,c)$ is irreducible, then $\Hilb^{[c]}({\mathbb{C}^{n}})$ is also irreducible.
\end{proposition}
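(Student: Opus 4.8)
The plan is to combine the scheme isomorphism $\Hilb^{[c]}(\mathbb{C}^{n})\cong\mathcal{M}(n,c)$ of Corollary \ref{identified} with the presentation $\mathcal{M}(n,c)=\mathcal{V}(n,c)^{\rm st}/GL(V)$ established in Section \ref{Matrix-Para}, and then to count irreducible components along the chain of spaces $\mathcal{C}(n,c)$, $\mathcal{V}(n,c)$, $\mathcal{V}(n,c)^{\rm st}$, $\mathcal{M}(n,c)$, checking at each step that components can only be lost, never created.

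First I would record that $\mathcal{V}(n,c)=\mathcal{C}(n,c)\times\Hom(W,V)$ and that $\Hom(W,V)$, being isomorphic to an affine space, is irreducible. Since the product of two irreducible varieties over the algebraically closed field $\mathbb{C}$ is again irreducible, the irreducible components of $\mathcal{V}(n,c)$ are precisely the sets $C_{i}\times\Hom(W,V)$, where $C_{1},\dots,C_{k}$ denote the irreducible components of $\mathcal{C}(n,c)$; in particular $\mathcal{V}(n,c)$ has exactly $k$ irreducible components.

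Next I would observe that $\mathcal{V}(n,c)^{\rm st}=\mathbb{B}^{\rm st}\cap\mathcal{V}(n,c)$ is a Zariski-open subset of $\mathcal{V}(n,c)$: by Proposition \ref{prop3}, for data in $\mathcal{V}(n,c)$ stability is equivalent to surjectivity of the morphism $\mathcal{R}_{n}$, which is an open condition. An open subscheme of a scheme has its irreducible components given by the nonempty intersections of the open set with the components of the ambient space, so $\mathcal{V}(n,c)^{\rm st}$ has at most $k$ irreducible components. Finally, the quotient map $\pi\colon\mathcal{V}(n,c)^{\rm st}\to\mathcal{M}(n,c)$ is continuous and surjective; writing $\mathcal{V}(n,c)^{\rm st}=\bigcup_{j}Z_{j}$ as the union of its (at most $k$) irreducible components, we get $\mathcal{M}(n,c)=\bigcup_{j}\overline{\pi(Z_{j})}$, a finite union of irreducible closed subsets, whence $\mathcal{M}(n,c)$ has at most $k$ irreducible components. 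Transporting this back through $\Hilb^{[c]}(\mathbb{C}^{n})\cong\mathcal{M}(n,c)$ yields the first assertion, and the special case $k=1$ gives the ``in particular'' clause.

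None of the three reduction steps is deep; the point that needs to be stated with a little care is the elementary topological lemma that a space admitting a presentation as a finite union of $m$ irreducible closed subsets has at most $m$ irreducible components, together with the verification that a continuous surjection, an open immersion, and a product with an irreducible factor each respect this bound. The hard part, such as it is, lies not in this proposition but in its input — the known irreducibility statements for $\mathcal{C}(n,c)$ quoted in the introduction, which are what make the conclusion for $\Hilb^{[c]}(\mathbb{C}^{3})$ nontrivial.
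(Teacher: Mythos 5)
Your proof is correct and follows essentially the same route as the paper's: both count irreducible components along the chain $\mathcal{C}(n,c)\to\mathcal{V}(n,c)\to\mathcal{V}(n,c)^{\rm st}\to\mathcal{M}(n,c)$ and then transport the bound through Corollary \ref{identified}. The only cosmetic difference is in the final step, where the paper invokes the irreducibility of the group $G=GL(V)$ to conclude that each $\mathcal{V}_l(n,c)^{\rm st}/G$ is irreducible, while you use the equivalent point-set fact that a continuous surjection carries a decomposition into finitely many irreducible pieces to one of the target.
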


\begin{proof}
Clearly, the number of irreducible components of $\mathcal{C}(n,c)$ is the same as the number of irreducible components of $\mathcal{V}(n,c):=\mathcal{C}(n,c)\times\Hom(W,V)$. Let
$\mathcal{V}_1(n,c),\dots,$ $\mathcal{V}_p(n,c)$ denote the irreducible components of
$\mathcal{V}(n,c)$, and set \linebreak $\mathcal{V}_l(n,c)^{\rm st}:=\mathcal{V}_l(n,c)\cap\mathcal{V}(n,c)^{\rm st}$, with $l=1,\dots,p$.

It is possible that some components of $\mathcal{V}(n,c)$ contain no stable points; one can then order the irreducible components of $\mathcal{V}(n,c)$ in such a way that 
$\mathcal{V}_l(n,c)^{\rm st}\ne\emptyset$ for $l=1,\dots,q$ and $\mathcal{V}_l(n,c)^{\rm st}=\emptyset$ for $l=q+1,\dots,p$.

Since the group $G:=GL(V)$ is irreducible, it is easy to see that if $x\in\mathcal{V}_l(n,c)^{\rm st}$ then its orbit $G\cdot x \subset \mathcal{V}_l(n,c)^{\rm st}$. Note also that
$$ \mathcal{V}_l(n,c) /\!/_{\chi} G = \mathcal{V}_l(n,c)^{\rm st}/G $$
is irreducible, for each $l=1,\dots,q$. 

Since the GIT quotient $\mathcal{M}(n,c)$ coincides, by Proposition \ref{closed orbit}, with the set of stable $G$-orbits, we have that 
$$ \mathcal{M}(n,c) = 
\left( \mathcal{V}_1(n,c)^{\rm st}/G \right) \cup \cdots \cup 
\left( \mathcal{V}_q(n,c)^{\rm st}/G \right) $$
and the desired conclusion follows from Corollary \ref{identified}.
\end{proof}

As an immediate consequence of \cite[Theorems 26 \& 32]{S}, we obtain the following new result on the irreducibility of the Hilbert scheme of points in dimension $3$.

\begin{corollary} 
$\Hilb^{[c]}({\mathbb{C}^{3}})$ is irreducible for $c\le 10$.
\end{corollary}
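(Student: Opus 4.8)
The plan is to obtain the corollary by combining Proposition \ref{number} with the recent results of Sivic on the commuting variety of three matrices. The statement is the juxtaposition of two facts. First, Proposition \ref{number} with $n=3$ asserts that if $\mathcal{C}(3,c)$ is irreducible, then so is $\Hilb^{[c]}(\mathbb{C}^3)$; the proof of that proposition rests on the schematic identification $\Hilb^{[c]}(\mathbb{C}^3)\cong\mathcal{M}(3,c)=\mathcal{V}(3,c)^{\mathrm{st}}/G$ of Corollary \ref{identified}, on the equality $\mathcal{V}(3,c)=\mathcal{C}(3,c)\times\Hom(W,V)$ (so that the two varieties have the same number of irreducible components, $\Hom(W,V)$ being an affine space), and on the irreducibility of $G=GL(V)$ (so that every irreducible component of $\mathcal{V}(3,c)$ either meets the stable locus in a $G$-invariant dense open set mapping onto an irreducible piece of $\mathcal{M}(3,c)$, or contains no stable point at all). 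Second, Sivic's \cite[Theorems 26 \& 32]{S} together establish that $\mathcal{C}(3,c)$ is irreducible for every $c\le 10$.

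To carry this out I would simply: (1) invoke Proposition \ref{number} in the case $n=3$; (2) invoke \cite[Theorems 26 \& 32]{S}, checking that the two theorems jointly cover the whole range $1\le c\le 10$; and (3) conclude that $\Hilb^{[c]}(\mathbb{C}^3)$ is irreducible for $c\le 10$. This improves on the bound $c\le 8$ of \cite[Theorem 1.1]{CEVV}, which was established there by entirely different, computational methods.

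The only substantive ingredient is Sivic's irreducibility theorem, which I would import as a black box; this is where all the difficulty sits, and there is nothing for us to add to it. On the side of the present paper there is no real obstacle, the single point worth noting being that Proposition \ref{number} counts irreducible components as topological spaces, so that the schematic isomorphism of Corollary \ref{identified} together with the fact that $\mathcal{V}(3,c)^{\mathrm{st}}$ is open in $\mathcal{V}(3,c)$ (hence dense in each component it meets) are more than enough to transport the component count without creating new components.
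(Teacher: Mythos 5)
Your proposal is correct and is exactly the paper's argument: the corollary is stated there as an immediate consequence of Proposition \ref{number} (with $n=3$) combined with Sivic's irreducibility of $\mathcal{C}(3,c)$ for $c\le 10$ from \cite[Theorems 26 \& 32]{S}. Your additional remarks on why the component count transports through the GIT quotient simply restate the content of Proposition \ref{number}'s proof, so there is nothing to add or correct.
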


As a final comment, we remark that determining which components of $\mathcal{V}(n,c)$ admit stable solutions seems to be a very interesting problem both from the linear algebra and the algebraic geometry points of view. More precisely, given an $n$-tuple of commuting matrices $(B_1,\dots,B_n)$, when is it possible to find a deformation $(B_1',\dots,B_n')$ and a vector $I$ such that the datum $(B_1',\dots,B_n',I)$ is stable?


\end{document}